\newtheorem{theorem}{Theorem}[section]
\newtheorem{corollary}[theorem]{Corollary}
\newtheorem{lemma}[theorem]{Lemma}
\newtheorem{proposition}[theorem]{Proposition}
\newtheorem{conjecture}[theorem]{Conjecture}
\theoremstyle{definition}
\newtheorem{example}[theorem]{Example}
\newtheorem{remark}[theorem]{Remark}
\numberwithin{equation}{subsection}
\newtheorem*{ack}{Acknowledgement}
\newcommand{\Conj}{\operatorname{Conj}}
\newcommand{\Core}{\operatorname{Core}}
\newcommand{\Env}{\operatorname{Env}}
\newcommand{\C}{\operatorname{C}}
\newcommand{\id}{\operatorname{Id}}
\newcommand{\mcg}{\mathcal{M}}
\newcommand{\dq}{\mathcal{D}}
\tikzset{>=stealth', arrow/.style={->}}
\begin{document}

\title{Presentations of Dehn quandles}
\author{Neeraj K. Dhanwani}
\author{Hitesh Raundal}
\author{Mahender Singh}

\address{Department of Mathematical Sciences, Indian Institute of Science Education and Research (IISER) Mohali, Sector 81, S. A. S. Nagar, P. O. Manauli, Punjab 140306, India.}
\email{neerajk.dhanwani@gmail.com}
\email{hiteshrndl@gmail.com}
\email{mahender@iisermohali.ac.in}

\subjclass[2020]{Primary 20M32, 57K12; Secondary 57K20, 20F36}
\keywords{Artin group, Dehn quandle, Garside group, Gaussian group, mapping class group, surface group}

\begin{abstract}
The paper gives two approaches to write explicit presentations for the class of Dehn quandles using presentations of their underlying groups. The first approach gives finite presentations for Dehn quandles of a class of Garside groups and Gaussian groups. The second approach is for general Dehn quandles when the centralisers of generators of their underlying groups are known. Several examples including Dehn quandles of spherical Artin groups, surface groups and mapping class groups of orientable surfaces are given to illustrate the results. 

\end{abstract}

\maketitle

\section{Introduction}
A quandle is an algebraic system with a binary operation that encodes the three Reidemeister moves of planar diagrams of links in the 3-space. Besides being fundamental to knot theory as observed first in \cite{Joyce1979, Joyce1982, Matveev1982}, these objects appear in the study of mapping class groups \cite{Zablow1999}, set-theoretic solutions of the quantum Yang-Baxter equation and Yetter-Drinfeld Modules \cite{Eisermann2005}, Riemannian symmetric spaces \cite{Loos1} and Hopf algebras \cite{Andruskiewitsch2003}, to name a few. Many classical topological, combinatorial and geometric knot invariants such as the knot group \cite{Joyce1979, Matveev1982}, the knot coloring, the Conway polynomial, the Alexander polynomial \cite{Joyce1979} and the volume of the complement in the 3-sphere of a hyperbolic knot \cite{MR3226796} can be retrieved from the knot quandle. Though quandles give strong invariants of knots, the isomorphism problem for them is hard. This has motivated search for newer properties, constructions and invariants of quandles themselves.
\par

Understanding of presentations of quandles is a fundamental problem and determining a presentation is challenging in general. Even for the simplest quandles arising from groups, such as conjugation quandles of  infinite abelian groups, the number of generators and relations turn out to be infinite. In this paper, we give two approaches to write explicit presentations for a fairly large class of quandles called Dehn quandles introduced recently in \cite{Dhanwani-Raundal-Singh-2021}. We believe that understanding of presentations has the potential to lead to a combinatorial quandle theory.
\par

The notion of a Dehn quandle of a group is motivated by two classes of examples. The first being the free quandle on a set $S$, which is simply the union of conjugacy classes of elements of $S$ in the free group on $S$, equipped with the quandle operation of conjugation. The second class of examples is given by surfaces. Let $\mathcal{M}_g$ be the mapping class group of a closed orientable surface $S_g$ of genus $g \ge 1$ and $\mathcal{D}_g^{ns}$ the set of isotopy classes of non-separating simple closed curves on $S_g$. It is known that $\mathcal{M}_g$ is generated by Dehn twists along finitely many simple closed curves from $\mathcal{D}_g^{ns}$ \cite[Theorem 4.1]{Farb-Margalit2012}. The binary operation $$\alpha * \beta= T_\beta(\alpha),$$ where $\alpha, \beta \in \mathcal{D}_g^{ns}$ and $T_\beta$ is the Dehn twist along $\beta$, turns $\mathcal{D}_g^{ns}$ into a quandle called the Dehn quandle of the surface $S_g$. It turns out that $\mathcal{D}_g^{ns}$ can be seen as a subquandle of the conjugation quandle $\Conj(\mathcal{M}_g)$ of the mapping class group $\mathcal{M}_g$ of $S_g$, by identifying the isotopy class of a simple closed curve with the isotopy class of the corresponding Dehn twist.  These quandles originally appeared in the work of Zablow \cite{Zablow1999, Zablow2003}. He derived a homology theory based on Dehn quandles of surfaces \cite{Zablow2008} and showed that isomorphism classes of Lefschetz fibrations over a disk correspond to quandle homology classes in dimension two. Further, in \cite{ChamanaraHuZablow}, the Dehn quandle structure of the torus has been extended to a quandle structure on the set of its measured geodesic foliations and the quandle homology of this extended quandle has been explored. On the other hand, \cite{kamadamatsumoto,Yetter2002, Yetter2003} considered a quandle structure on the set of isotopy classes of simple closed arcs on an orientable surface with at least two punctures, and called it the quandle of cords. In the case of a disk with $n$ punctures, this is simply the Dehn quandle of the braid group $B_n$ with respect to its standard Artin generators. A presentation for the quandle of cords of the real plane and the 2-sphere has been given in \cite{kamadamatsumoto}. Beyond these special cases, we have not seen explicit presentations of Dehn quandles of surfaces in the literature. In fact, other than the well-known procedure of writing presentations of link quandles of tame links via their planar diagrams, general results on presentations of quandles do not seems to be known. However, an analogue of Tietze's theorem relating two finite presentations of a quandle is known due to
Fenn and Rourke \cite[Theorem 4.2]{MR1194995}. Dehn quandles of groups with respect to their subsets include many well-known constructions of quandles from groups including conjugation quandles, free quandles, Coxeter quandles \cite{TAkita}, Dehn quandles of closed orientable surfaces, quandles of cords of orientable surfaces, knot quandles of prime knots, core quandles of groups and generalized Alexander quandles of groups with respect to fixed-point free automorphisms, to name a few. See \cite{Dhanwani-Raundal-Singh-2021} for details.
\par

The paper is organized as follows. We give two approaches to write explicit presentations of Dehn quandles using presentations of their underlying groups. In Section \ref{sec-gaussian-and-garside-quandles}, using Garside theory, we give finite presentations of Dehn quandles of groups of fractions of Garside monoids and Gaussian monoids (Theorem \ref{thm-present-dehn-quandle-right-gaussian-group} and Theorem \ref{thm-present-dehn-quandle-left-gaussian-group}). We give examples of presentations for Dehn quandles using this approach including those of spherical Artin quandles.  In Section \ref{section-present-dehn-quandles}, we prove a general result giving a presentation of the Dehn quandle of a group with respect to a generating set when the centraliser of each generator is known (Theorem \ref{presentation-dehn-quandle}). As a consequence, it follows that is $G=\langle S \mid R\rangle$ is a finitely presented group such that the centraliser of each generator from $S$ is finitely generated, then the Dehn quandle $\dq(S^G)$ is finitely presented (Corollary \ref{finitely gen dehn quandle}). Although Theorem \ref{presentation-dehn-quandle} is general, finding generating sets for centralisers of elements in interesting classes of groups like Garside groups and Artin groups is usually challenging. We give presentations for Dehn quandles of surface groups, braid groups and mapping class groups of orientable surfaces to illustrate the result. Examples of braid groups show that presentations of Garside quandles given by Theorem \ref{presentation-dehn-quandle} usually have larger number of relations than the ones given by Theorem \ref{thm-present-dehn-quandle-right-gaussian-group}.
\medskip

\section{Presentations of Gaussian quandles and Garside quandles}\label{sec-gaussian-and-garside-quandles}

We refer the reader to \cite{Joyce1979, Joyce1982, Matveev1982} for basic facts on quandles. Throughout the paper, we consider only right distributive quandles and follow conventions used in \cite{Dhanwani-Raundal-Singh-2021}. If $(X,*)$ is such a quandle with a generating set $S$, then by \cite[Lemma 4.4.8]{Winker1984}, any element of $X$ can be written in a left-associated product of the form
\begin{equation*}
\left(\left(\cdots\left(\left(a_0*^{e_1}a_1\right)*^{e_2}a_2\right)*^{e_3}\cdots\right)*^{e_{n-1}}a_{n-1}\right)*^{e_n}a_n
\end{equation*}
for $a_i \in S$ and $e_i \in \{ 1, -1\}$. For simplicity of notation, we write such an element as
\begin{equation*}
a_0*^{e_1}a_1*^{e_2}\cdots*^{e_n}a_n.
\end{equation*}
\par

Let $G$ be a group, $S$ a non-empty subset of $G$ and $S^G$ the set of all conjugates of elements of $S$ in $G$. The {\it Dehn quandle} $\mathcal{D}(S^G)$ of $G$ with respect to $S$ is the set $S^G$ equipped with the binary operation of conjugation, that is, $$x*y=yxy^{-1}$$ for $x, y \in \mathcal{D}(S^G)$. We refer to our recent work \cite{Dhanwani-Raundal-Singh-2021} for examples and basic results on Dehn quandles. 
\medskip

Understanding of presentations of quandles is fundamental to the development of quandle theory.   In this section, we give presentations of Dehn quandles of certain Gaussian groups and Garside groups, which we shall refer as {\it Gaussian quandles} and {\it Garside quandles}, respectively. The notion of a Gaussian group and a Garside group was first introduced by Dehornoy and Paris \cite{Dehornoy-Paris-1999} and developed further in the works of Dehornoy \cite{Dehornoy2002} and Picantin \cite{Picantin2000,Picantin2001a,Picantin2001b}. The definition of a Garside group which many authors use first appeared in \cite{Dehornoy2002}. Many equivalent definitions of a Garside group can be seen in the literature. For example, see \cite{Birman-Gebhardt-Meneses-2007,Crisp-Paris-2005,Franco-Meneses-2003,Gebhardt-2005}. Note that a Garside group is referred as a {\it small Gaussian group} or a {\it thin Gaussian group} in \cite{Dehornoy-Paris-1999,Picantin2000,Picantin2001a,Picantin2001b}, and the notion of a Garside group in \cite{Dehornoy-Paris-1999} is a special kind of a Garside group (see the comment after the definition of a Garside monoid). The theory of Garside groups is largely inspired by the work of Garside \cite{Garside1969} in which he treated braid groups, and by the work of Brieskorn and Saito \cite{Brieskorn-Saito-1972} generalizing the work of Garside to Artin groups of spherical type.
\medskip

\subsection{Preliminaries on Gaussian and Garside monoids, and groups}

An element $a$ of a monoid $M$ is called an {\it atom} if $a\neq1$ and $a$ is indecomposable (i.e. if $a=bc$, then either $b=1$ or $c=1$). A monoid $M$ is called {\it atomic} if the atoms of $M$ are finite in number, they generate $M$, and if for every $x\in M$, there exists an integer $N_x$ such that $x$ cannot be written as a product of more than $N_x$ atoms. Using \cite[Proposition 2.1(iii)]{Dehornoy-Paris-1999}, we can say that every atomic monoid is {\it conical}, i.e. there are no invertible elements other than the identity element of the monoid. In a monoid $M$, an element $x$ is a {\it left divisor} of $y$, or $y$ is a {\it right multiple} of $x$, if there exists an element $z$ satisfying $y= xz$. {\it Right divisors} and {\it left multiples} are defined analogously. We denote by $x\leq_L y$ if $x$ is a left divisor of $y$, and $x\leq_R y$ if $x$ is a right divisor of $y$. In an atomic monoid $M$, the left and right divisibility relations (i.e. the relations $\leq_L$ and $\leq_R$) are respectively left and right invariant partial orders on $M$ (see \cite[Proposition 2.3]{Dehornoy-Paris-1999}). 

A {\it right Gaussian monoid} is a monoid $M$ satisfying the following properties:
\begin{itemize}
\item $M$ is atomic;
\item $M$ is left and right cancellative;
\item $(M,\leq_L)$ is a lattice (i.e. the left g.c.d. and the right l.c.m. exist and they are unique for every $x,y\in M$, see the definitions below).
\end{itemize}

A {\it left Gaussian monoid} is defined similarly. A {\it Gaussian monoid} is both left as well as right Gaussian monoid. 
Note that the preceding definition of a right Gaussian monoid is different from that of \cite{Dehornoy-Paris-1999}. We have placed an extra assumption of right cancellativity to insure that such a monoid embeds in its group of fractions (see \cite[Theorem 1.23]{Clifford-Preston-1961} for Ore's criterion). 
\par

For elements $x$ and $y$ in a right Gaussian monoid, the {\it left greatest common divisor (left g.c.d.)} of $x$ and $y$ is a common left divisor $z$ of $x$ and $y$ such that every common left divisor of $x$ and $y$ is a left divisor of $z$. The {\it right least common multiple (right l.c.m.)} of $x$ and $y$ is a common right multiple $z$ of $x$ and $y$ such that every common right multiple of $x$ and $y$ is a right multiple of $z$. Similarly, in a left Gaussian monoid, we can define the {\it right g.c.d.} and the {\it left l.c.m.} of elements.
\par

For elements $x$ and $y$ in a right Gaussian monoid, denote the left g.c.d. and the right l.c.m. of $x$ and $y$ respectively by $x\wedge y$ and $x\vee y$. The {\it right residue} of $x$ in $y$ (denoted by $x\backslash y$) is the unique element $z$ satisfying $x\vee y=xz$. Thus, we have
\begin{equation*}
x\vee y=x(x\backslash y)=y(y\backslash x)= y \vee x.
\end{equation*}
\par

A {\it Garside element} of a monoid $M$ is an element $\Delta\in M$ such that the left divisors of $\Delta$ coincide with the right divisors of $\Delta$, they are finite in number, and they generate $M$. We say that a Gaussian monoid $M$ is a {\it Garside monoid} if it contains a Garside element. 
By \cite[Proposition 2.2]{Dehornoy-Paris-1999}, the set of atoms in $M$ is contained in every generating subset of $M$. It follows that atoms are also divisors of $\Delta$. In \cite{Dehornoy-Paris-1999,Picantin2000,Picantin2001a,Picantin2001b}, a Garside monoid is referred as a {\it small Gaussian monoid} or a {\it thin Gaussian monoid}. One can look at \cite[Proposition 2.1]{Dehornoy2002} for a necessary and sufficient condition on a monoid to be a Garside monoid. Note that the notion of a {\it Garside monoid} in \cite{Dehornoy-Paris-1999} is slightly restricted in the sense that the left l.c.m. and the right l.c.m. of atoms in a Garside monoid should coincide and it is a Garside element in the sense defined above.
\par

The {\it group of fractions} of a monoid $M$ is the group which has the same presentation as that of the monoid $M$. In other words, if a monoid $M=\langle S \mid R \rangle$ is the quotient of the free monoid on $S$ modulo the relations in $R$, then its group of fractions is the quotient of the free group on $S$ modulo the relations in $R$. 
\par

A {\it Gaussian group} (respectively, a {\it Garside group}) is the group of fractions of a Gaussian monoid (respectively, of a Garside monoid). Similarly, we can also define a {\it right Gaussian group} and a {\it left Gaussian group}.
\par
An {\it Ore monoid} is one that embeds in its group of fractions.  Ore's criterion says that if a monoid $M$ is left and right cancellative, and if any two elements of $M$ have a common right multiple, then $M$ embeds in its group of fractions \cite[Theorem 1.23]{Clifford-Preston-1961}. Note that a Gaussian monoid (respectively, a Garside monoid) satisfies Ore’s conditions, and thus embeds in the corresponding Gaussian group (respectively, in the corresponding Garside group). The same is true for right Gaussian monoids as well as left Gaussian monoids.
\par

Let $M$ be a right Gaussian monoid, $A$ the set of atoms in $M$ and $G$ the group of fractions of $M$. Then the Dehn quandle $\mathcal{D}(A^G)$ will be referred as a {\it right Gaussian quandle}. The terms {\it left Gaussian quandles}, {\it Gaussian quandles} and {\it Garside quandles} are defined similarly. For example, in case of a spherical Artin group, the Artin quandle as defined in \cite[Section 3]{Dhanwani-Raundal-Singh-2021} is a Garside quandle. Here, note that, by \cite{Brieskorn-Saito-1972} and \cite[Example 1]{Dehornoy-Paris-1999}, a spherical Artin group is a Garside group.
\par
Let $S$ be any finite set. We set the following notations:
\begin{itemize}
\item $S^{-1}$ - a set which is in one to one correspondence with $S$ (for each $s\in S$, the corresponding element in $S^{-1}$ is denoted by $s^{-1}$ which denotes the inverse of $s$ in the free group on $S$).
\item $S^*$ - the set of words on $S$ together with the empty word (i.e. it is nothing but the free monoid on $S$ and the empty word corresponds the identity element of the free monoid).
\end{itemize}
Note that, by the notation itself, the set $(S\cup S^{-1})^*$ is nothing but the free monoid on $S\cup S^{-1}$. The empty word represents the identity element in $S^*$ (or, in $(S\cup S^{-1})^*$) and we denote it by $\epsilon$ or by $1$, as per convenience of notation. For a word $x=s_1^{\delta_1}s_2^{\delta_2}\cdots s_n^{\delta_n}$ in $(S\cup S^{-1})^*$, we denote the word $s_n^{-\delta_n}s_{n-1}^{-\delta_{n-1}}\cdots s_1^{-\delta_1}$ by $x^{-1}$, where $s_i\in S$ and $\delta_i\in\{1,-1\}$. Note that, for any $x\in (S\cup S^{-1})^*$, the elements $xx^{-1}$ and $x^{-1}x$ are different elements of $(S\cup S^{-1})^*$, and they differ from the empty word. A word $x\in(S\cup S^{-1})^*$ is called {\it positive} if $x\in S^*$. For a positive word $x\in S^*$, the {\it length} of $x$ (denoted by $\ell(x)$) is the number of letters in $x$ (i.e. $\ell(x)=n$ if $x=s_1s_2\cdots s_n$, where $s_i\in S$). Note that the length of a positive word is zero if and only if it is the empty word. A word $w\in(S\cup S^{-1})^*$ is called {\it symmetric} if it is of the form $x^{-1}s^\delta x$ for some $x\in(S\cup S^{-1})^*$, $s\in S$ and $\delta\in\{-1,1\}$. The word $w$ is called {\it positive symmetric} if $\delta=1$ and it is called {\it negative symmetric} if $\delta=-1$. We denote the set of positive symmetric words on $S\cup S^{-1}$ by $S^{(S\cup S^{-1})^*}$, i.e., $$S^{(S\cup S^{-1})^*}=\{x^{-1}s x\mid s\in S\;\text{and}\;x\in(S\cup S^{-1})^*\}.$$ 
\par
Let $M$ be a monoid generated by a finite set $S$ and $G$ the group of fractions of $M$. For words $x$ and $y$ in $(S\cup S^{-1})^*$, we write $x=_G y$ if $x$ and $y$ represent the same element in $G$. Similarly, we write $x=_M y$ if $x$ and $y$ are positive words and they represent the same element in $M$. Note that if $M$ is an Ore monoid, then two positive words $x$ and $y$ are equivalent in $M$ if and only if they are equivalent in $G$. In particular, this is true for right Gaussian monoids, left Gaussian monoids and Garside monoids, since such monoids are Ore monoids.
\par
For a finite set $S$, a {\it complement} on $S$ is a map $f:S\times S\to S^*$ such that $f(s,t)$ is the empty word if and only if $s=t$. A complement $f$ on $S$ is said to be {\it homogeneous} if $\ell(f(s,t))=\ell(f(t,s))$ for every $(s,t)\in S\times S$. A presentation of a monoid (or, of a group) is called a {\it complemented presentation} if it is of the form
$$\left\langle S\mid sf(s,t)=tf(t,s) \quad \text{for}\;s,t\in S\right\rangle \qquad\text{or}\quad\left\langle S\mid g(s,t)t=g(t,s)s \quad \text{for}\;s,t\in S\right\rangle,$$ where $f$ and $g$ are complements on $S$. A complemented presentation is called {\it homogeneous} if the associated complement is homogeneous. In general, a finite presentation $\left\langle S\mid R\right\rangle $ of a monoid is {\it homogeneous} if $\ell(x)=\ell(y)$ for every relation $x=y$ in $R$, where $x,y\in S^*$. A monoid is {\it homogeneous} if it has a finite homogeneous presentation. For example, Artin monoids (equipped with Artin presentations) are homogeneous.
\par
Let $M$ be a right Gaussian monoid (or, a Garside monoid) and $S$ a finite generating set for $M$. A {\it right l.c.m. selector} on $S$ in $M$ is a complement $f$ on $S$ such that $f(s,t)$ represents the element $s\backslash t$ in $M$ for every $(s,t)\in S\times S$. By \cite[Theorem 4.1]{Dehornoy-Paris-1999}, $M$ has the complemented presentation $$\left\langle S\mid sf(s,t)=tf(t,s) \quad \text{for}\;s,t\in S\right\rangle,$$ where $f$ is the right l.c.m. selector on $S$ in $M$. The corresponding right Gaussian group (respectively, the corresponding Garside group) is the quotient of the free monoid $(S\cup S^{-1})^*$ by relations $ss^{-1}=1$, $s^{-1}s=1$ and $sf(s,t)=tf(t,s)$ for $s,t\in S$.
\par
Let $M$ be a left Gaussian monoid (or, a Garside monoid) and $S$ a finite generating set for $M$. A {\it left l.c.m. selector} on $S$ in $M$ is a complement $g$ on $S$ such that $g(s,t)$ represents the element $s/t$ in $M$ for every $(s,t)\in S\times S$. The monoid $M$ has the complemented presentation $$\left\langle S\mid g(s,t)t=g(t,s)s \quad \text{for}\;s,t\in S\right\rangle,$$ where $g$ is the left l.c.m. selector on $S$ in $M$. The corresponding left Gaussian group (respectively, the corresponding Garside group) has the same complemented presentation as given above.
\par
One can look at \cite[sections 3 and 4]{Dehornoy-Paris-1999}, for necessary and sufficient conditions on a monoid with complimented presentations to be a Gaussian monoid (or, a right Gaussian monoid, or a left Gaussian monoid). Also, see \cite[Criterion 5.9]{Crisp-Paris-2005}, \cite[propositions 5.4, 5.12 and 6.14]{Dehornoy2002} and \cite[Corollary 3.11 and Theorem 4.2]{Dehornoy-Paris-1999}, for necessary and sufficient conditions on a monoid with complimented presentations to be a Garside monoid.

\subsection{Presentations of right Gaussian quandles and Garside quandles of certain types}

For this subsection, we set the following notations:
\begin{itemize}
\item $M$ - a right Gaussian monoid;
\item $S$ - a finite generating set for $M$;
\item $A$ - the set of atoms in $M$;
\item $N$ - a Garside monoid;
\item $T$ - a finite generating set for $N$;
\item $B$ - the set of atoms in $N$;
\item $\Delta$ - a Garside element in $N$;
\item $(M,S)$, $(M,A)$, $(N,T)$, $(N,B)$, $(N,\Delta)$ and $(N,T,\Delta)$ - pairs and a triple of objects with the meaning above.
\end{itemize}

For a pair $(M,S)$, we assume throughout this subsection that elements in $S$ are pairwise distinct in $M$. The same is assumed in case of a pair $(N,T)$.
\par

For any pair $(N,\Delta)$, by \cite[lemmas 2.2 and 2.3]{Dehornoy2002}, the map $$x\mapsto(x\backslash\Delta)\backslash\Delta$$ is a permutation of the set of divisors of $\Delta$ and it extends to an automorphism of $N$, which further extends to an automorphism $\phi$ of the Garside group $H$ corresponding to $N$.

\begin{remark}\label{rmk1}
By \cite[Lemma 2.2]{Dehornoy2002}, we have $x\Delta=\Delta \phi(x)$ for all $x\in H$. Since $\phi(N)=N$ and $\phi(B)=B$, we have $N\Delta=\Delta N$ and $B\Delta=\Delta B$. 
\end{remark}

Consider the following conditions on a pair $(M,S)$:
\begin{enumerate}[(i)]
\item $(s\backslash t)s\in S$ whenever $(s,t)\in S\times S$ and $s\leq_L t$.\label{condition1}
\item $s\backslash t\leq_L s\vee t$ for all $(s,t)\in S\times S$.\label{condition2}
\item $(s\backslash t)\backslash(s\vee t)\in S$ for all $(s,t)\in S\times S$.\label{condition3}
\item $M$ has a finite homogeneous presentation $\left\langle S\mid R\right\rangle$.\label{condition4}
\item For each $s\in S$ and each $x\in M$, there exists $y=y(s,x)$ in $M$ such that $xy\leq_L sxy$.\label{condition5}
\end{enumerate}

Recall that the set of atoms in $M$ is contained in every generating subset of $M$. Thus, if the set of atoms does not satisfy condition (\ref{condition2}), then there does not exist any generating set for $M$ satisfying condition (\ref{condition2}).

\begin{remark}\label{rmk2}
Note that, in an atomic monoid, an atom $x$ is a divisor of an atom $y$ if and only if $x=y$. Thus, a pair $(M,A)$ of a right Gaussian monoid $M$ and the set $A$ of atoms in $M$ satisfies  condition (\ref{condition1}) trivially.
\end{remark}

\begin{lemma}\label{lem-conditions-ii-and-iv}
If a pair $(M,S)$ satisfies conditions (\ref{condition2}) and (\ref{condition4}), then it satisfies conditions (\ref{condition1}) and (\ref{condition3}). Moreover, in this case $S$ must be the set of atoms in $M$.
\end{lemma}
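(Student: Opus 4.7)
The plan is to first deduce the ``moreover'' assertion that $S$ coincides with the atom set $A$, since the length function supplied by (\ref{condition4}) is what drives the rest of the argument. From the homogeneous presentation $\langle S\mid R\rangle$, I would define $\ell\colon M \to \mathbb{Z}_{\geq 0}$ by setting $\ell(x)$ equal to the number of letters in any word of $S^*$ representing $x$; homogeneity of $R$ makes $\ell$ well-defined, and the additivity $\ell(xy)=\ell(x)+\ell(y)$ is immediate from concatenation. In particular $\ell(s)=1$ for every $s\in S$, and $\ell(x)=0$ iff $x=1$. Any factorization $s=ab$ of $s\in S$ would then force $\ell(a)+\ell(b)=1$, so exactly one factor is the identity; hence every element of $S$ is an atom, giving $S\subseteq A$, while the reverse inclusion is the fact, recalled just before the lemma, that the set of atoms is contained in every generating subset of $M$.

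With $S=A$ in hand, condition (\ref{condition1}) becomes almost trivial: if $s\leq_L t$ for $s,t\in S$, write $t=sx$ with $x\in M$. Since $t$ is an atom and $s\neq 1$, indecomposability of $t$ forces $x=1$, so $t=s$ and $s\backslash t = 1$. Hence $(s\backslash t)s = s\in S$.

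For condition (\ref{condition3}), set $u=s\backslash t$. By (\ref{condition2}) there is some $w\in M$ with $s\vee t = uw$, and by definition $s\vee t = su$, so $su=uw$. Taking lengths yields $\ell(w)=\ell(s)=1$, so $w$ is an atom and thus lies in $S$. Because $u\leq_L s\vee t$, we have $u\vee(s\vee t) = s\vee t = u\cdot w$, and therefore $(s\backslash t)\backslash(s\vee t) = u\backslash(s\vee t) = w\in S$, as required.

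The only substantive input is the well-definedness and additivity of $\ell$ on $M$, which is a routine consequence of the homogeneity hypothesis; once that length function is in place, each of the three remaining claims reduces to a one-line manipulation with the definitions of $\vee$ and $\backslash$, so no deep obstacle remains. The main thing to be careful about is to prove the ``moreover'' clause before using $S=A$ in the verifications of (\ref{condition1}) and (\ref{condition3}), as those arguments genuinely rely on atoms being indecomposable.
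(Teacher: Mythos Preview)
Your proposal is correct and follows essentially the same approach as the paper: both use the length function coming from the homogeneous presentation to conclude that $S$ is the set of atoms, invoke the atom observation (the paper's Remark~\ref{rmk2}) for condition~(\ref{condition1}), and then use additivity of length on the identity $s(s\backslash t)=(s\backslash t)\bigl((s\backslash t)\backslash(s\vee t)\bigr)$ to force $(s\backslash t)\backslash(s\vee t)$ to have length~$1$ and hence lie in $S$. Your write-up is simply more explicit about why $\ell$ descends to $M$ and why $S=A$, but the logical structure is identical.
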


\begin{proof}
Since $(M,S)$ satisfies condition (\ref{condition4}), we have $\ell(x)=\ell(y)$ for any words $x,y\in S^*$ with $x=_M y$ and $\ell(z)=\ell(x)+\ell(y)$ for any words $x,y,z\in S^*$ with $z =_M xy$. Also, it is easy to see that $S$ must be the set of atoms in $M$. By Remark \ref{rmk2}, $(M,S)$ satisfies condition (\ref{condition1}). Since $S$ satisfies condition (\ref{condition2}), we have $$(s\backslash t)((s\backslash t)\backslash(s\vee t))=s\vee t=s(s\backslash t)$$ for all $s,t\in S$. Let $s$ and $t$ be any elements in $S$. Suppose $x$ and $y$ be positive words representing $s\backslash t$ and $(s\backslash t)\backslash(s\vee t)$, respectively. Then $xy =_M sx$ and hence $\ell(x)+\ell(y)=\ell(s)+\ell(x)$. This implies that $\ell(y)=\ell(s)=1$. Let $y=s_1s_2\cdots s_n$ for some $s_i\in S$. Since $\ell(y)=1$, we must have $n=1$. Thus, $y\in S$ and consequently $(s\backslash t)\backslash(s\vee t) \in S$. Hence,  $S$ satisfies condition \eqref{condition3}.
\end{proof}

We define the following terms:

\begin{itemize}
\item A pair $(M,S)$ is of
\begin{itemize}
\item {\it type $\mathcal{R}_1$} if it satisfies conditions (\ref{condition1}), (\ref{condition2}), (\ref{condition3}) and (\ref{condition5}).
\item {\it type $\mathcal{R}_2$} if it satisfies conditions (\ref{condition1}), (\ref{condition2}) and (\ref{condition3}), and if there exist a triple $(N,T,\Delta)$ with $T\Delta=\Delta T$ and an epimorphism $\pi:(N,T)\twoheadrightarrow (M,S)$.
\end{itemize}

\item A pair $(N,T)$ is of {\it type $\mathcal{R}_3$} if it satisfies conditions (\ref{condition1}), (\ref{condition2}) and (\ref{condition3}), and there exists a Garside element $\Delta\in N$ such that $T\Delta=\Delta T$.

\item A pair $(M,A)$ is of
\begin{itemize}
\item {\it type $\mathcal{R}_4$} if it satisfies conditions (\ref{condition2}), (\ref{condition3}) and (\ref{condition5}).
\item {\it type $\mathcal{R}_5$} if it satisfies conditions (\ref{condition2}), (\ref{condition4}) and (\ref{condition5}).
\item {\it type $\mathcal{R}_6$} if it satisfies conditions (\ref{condition2}) and (\ref{condition3}), and if there exist a triple $(N,T,\Delta)$ with $T\Delta=\Delta T$ and an epimorphism $\pi:(N,T)\twoheadrightarrow (M,A)$.
\item {\it type $\mathcal{R}_7$} if it satisfies conditions (\ref{condition2}) and (\ref{condition4}), and if there exist a triple $(N,T,\Delta)$ with $T\Delta=\Delta T$ and an epimorphism $\pi:(N,T)\twoheadrightarrow (M,A)$.
\end{itemize}

\item A pair $(N,B)$ is of
\begin{itemize}
\item {\it type $\mathcal{R}_8$} if it satisfies conditions (\ref{condition2}) and (\ref{condition3}).
\item {\it type $\mathcal{R}_9$} if it satisfies conditions (\ref{condition2}) and (\ref{condition4}).
\end{itemize}
\end{itemize}

For $i=4,5,6,7$, we say a right Gaussian monoid $M$ is of {\it type $\mathcal{R}_i$} if the pair $(M,A)$ is of type $\mathcal{R}_i$, where $A$ is the set of atoms in $M$. In this case, we also say the right Gaussian group $G$ corresponding to $M$ and the right Gaussian quandle $\mathcal{D}(A^G)$ are of {\it type $\mathcal{R}_i$}. Similarly, for $i=8,9$, we say a Garside monoid $N$ is of type {\it type $\mathcal{R}_i$} if the pair $(N,B)$ is of type $\mathcal{R}_i$, where $B$ is the set of atoms in $N$. In this case, we also say the Garside group $H$ corresponding to $N$ and the Garside quandle $\mathcal{D}(B^H)$ are of {\it type $\mathcal{R}_i$}.

\medskip

Many classes of Garside monoids are of type $\mathcal{R}_8$ and $\mathcal{R}_9$. For example, braid monoids, and more generally, Artin monoids of spherical type are Garside monoids of type $\mathcal{R}_9$. Nnote that, by \cite{Brieskorn-Saito-1972, Garside1969} and \cite[Example 1]{Dehornoy-Paris-1999}, such monoids are Garside monoids. The next two propositions give a machinery to produce families of Garside monoids of types $\mathcal{R}_8$ and $\mathcal{R}_9$.

\begin{proposition}\cite[Proposition 5.2]{Dehornoy-Paris-1999}\label{prop-dehornoy-paris-1}
Consider a finite set $S=\{x_1,x_2,\ldots,x_n\}$, $n$ positive words $u_1,u_2,\ldots,u_n$ in $S^*$, and a permutation $\delta$ of $\{1,2,\ldots,n\}$. We assume that:
\begin{enumerate}[(1)]
\item There exists a map $\nu:S \to \mathbb{Z}_{\ge 0}$ which when extended to $S^*$ by setting $\nu(\epsilon)=0$ and $\nu(uv)=\nu(u)+\nu(v)$, satisfies $$\nu\!\left(x_1u_1x_{\delta(1)}\right)=\nu\!\left(x_2u_2x_{\delta(2)}\right)=\cdots=\nu\!\left(x_nu_nx_{\delta(n)}\right).$$
\item For each index $k$, there exists an index $j$ satisfying $$x_ku_k=u_jx_{\delta(j)}.$$
\end{enumerate}
Let $M$ be the monoid defined by the presentation $$\left\langle x_1,x_2,\ldots,x_n~|~x_1u_1x_{\delta(1)}=x_2u_2x_{\delta(2)}=\cdots=x_nu_nx_{\delta(n)}\right\rangle.$$ Then $M$ is a Garside monoid.
\end{proposition}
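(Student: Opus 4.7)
The plan is to realize the given presentation as a complemented presentation on $S = \{x_1, \ldots, x_n\}$ and then invoke the Garside recognition criteria of \cite{Dehornoy-Paris-1999}. For $i \ne j$, set $f(x_i, x_j) = u_i x_{\delta(i)}$ and $f(x_i, x_i) = \epsilon$. Each defining relation $x_i u_i x_{\delta(i)} = x_j u_j x_{\delta(j)}$ then becomes $x_i f(x_i, x_j) = x_j f(x_j, x_i)$, so the presentation is the complemented presentation associated to $f$ on $S$. Hypothesis (1) says that $\nu$ extends additively to a map $S^* \to \mathbb{Z}_{\ge 0}$ taking the same value on both sides of every defining relation, hence it descends to a well-defined noetherian norm on $M$. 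This immediately forces $M$ to be atomic and conical, with $S$ contained in the atom set; cancellativity will come as a byproduct of the subsequent analysis.

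Next, I would verify the cube (coherence) condition on $f$, which is the hypothesis of \cite[Theorem 4.1]{Dehornoy-Paris-1999} needed to conclude that $M$ is right Gaussian. Hypothesis (2) is precisely the combinatorial input for this: it asserts that for each $k$ there exists $j$ with $x_k u_k = u_j x_{\delta(j)}$, so the common value $\Delta := x_k u_k x_{\delta(k)}$ is simultaneously a right multiple of every atom $x_k$ via the left factorization and a left multiple of every atom $x_{\delta(j)}$ via the right factorization. Since $\delta$ is a permutation, $\Delta$ is at once a common right multiple and a common left multiple of all atoms, and for any pair $(x_i, x_j)$ the iterative right l.c.m. computation terminates inside the divisor interval below $\Delta$, with length-control supplied by $\nu$.

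The main obstacle is proving the cube condition, i.e. showing that for any three atoms $x_i, x_j, x_k$ the two possible iterated right l.c.m.s coincide in $M$. This reduces to a combinatorial identity among positive words produced by recursive applications of hypothesis (2); the norm $\nu$ forces both outputs to have equal length, and iterated substitutions from hypothesis (2) identify them as representatives of the same element bounded above by $\Delta$. Once the cube condition is established, \cite[Theorem 4.1]{Dehornoy-Paris-1999} gives that $M$ is right Gaussian, and applying the same argument with $\delta$ replaced by $\delta^{-1}$ yields the parallel left statement, so $M$ is Gaussian. Finally, $\Delta$ is a Garside element: its left and right divisors coincide because both factorizations traverse all atoms $x_1, \ldots, x_n$ as ends by virtue of hypothesis (2) and $\delta$ being a permutation, they are finite in number (bounded via noetherianity by the words of $\nu$-norm at most $\nu(\Delta)$), and they generate $M$ since they contain $S$. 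Therefore $M$ is a Garside monoid.
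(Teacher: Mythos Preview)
The paper does not prove this proposition at all: it is quoted verbatim from \cite[Proposition 5.2]{Dehornoy-Paris-1999} and used as a black box, with only the remark afterward that $k\mapsto j$ is a permutation and that $S$ is the atom set. So there is no ``paper's own proof'' to compare against; your proposal is really a sketch of the Dehornoy--Paris argument itself.

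As such a sketch, your outline is on the right track, but the step you flag as the ``main obstacle'' is handled too loosely to count as a proof. You write that verifying the cube condition ``reduces to a combinatorial identity among positive words'' and that ``iterated substitutions from hypothesis (2) identify them.'' This is not an argument; it is a hope. The actual reason the cube condition holds is much cleaner and you have not isolated it: your complement satisfies $f(x_i,x_j)=u_i x_{\delta(i)}$ for every $j\neq i$, i.e.\ it is \emph{independent of the second argument}. Hence for distinct $x_i,x_j,x_k$ one has $f(x_i,x_j)=f(x_i,x_k)$, so the word-reversing of $f(x_i,x_j)^{-1}f(x_i,x_k)$ terminates immediately at $(\epsilon,\epsilon)$, and the cube condition is trivially satisfied. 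Without this observation, your paragraph on the cube condition is a gap.

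A second soft spot: you assert that ``cancellativity will come as a byproduct of the subsequent analysis'' and that the left-sided statement follows by ``replacing $\delta$ by $\delta^{-1}$.'' Both are true, but neither is justified. Left cancellativity follows from the complemented-presentation machinery once the cube condition is checked; right cancellativity needs the mirror argument, and for that you must observe that hypothesis (2) also yields a \emph{left} complement $g(x_{\delta(j)},x_{\delta(k)})=x_j u_j$ (independent of $k$), which is where the fact that $k\mapsto j$ is a permutation is actually used. Finally, your claim that left and right divisors of $\Delta$ coincide ``because both factorizations traverse all atoms'' is not sufficient: you need that every left divisor of $\Delta$ is a right divisor and conversely, which again comes from iterating hypothesis (2) and its mirror form, not merely from the atoms appearing at both ends.
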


In Proposition \ref{prop-dehornoy-paris-1}, one can check that the map $k\mapsto j$ is a permutation of $\{1,2,\ldots,n\}$. It follows from the proof of the proposition that $S$ is the set of atoms in $M$. If the words $x_iu_ix_{\delta(i)}$ and $x_ju_jx_{\delta(j)}$ both represent a right l.c.m. of $x_i$ and $x_j$ for all $i$ and $j$, then the set $S$ satisfies conditions (\ref{condition2}) and (\ref{condition3}). Consequently, $M$ is a Garside monoid of type $\mathcal{R}_8$.

\begin{example}\cite[Example 4]{Dehornoy-Paris-1999}\label{example1}
Consider a set $S=\{x_1,x_2,\ldots,x_n\}$ and  integers $p_1,p_2,\ldots,p_n$ strictly greater than $1$. Take $\delta$ to be the identity permutation and $u_i=x_i^{p_i-2}$ for each $i$. Choose $n$ positive integers $k_1, \ldots, k_n$ satisfying $k_1p_1= \cdots =k_n p_n$
 and set $\nu(x_{i_1}x_{i_2} \cdots x_{i_r})= k_{i_1}+ \cdots + k_{i_r}$. Then, by Proposition \ref{prop-dehornoy-paris-1}, the monoid $$\left\langle x_1,x_2,\ldots,x_n~|~x_1^{p_1}=x_2^{p_2}\cdots=x_n^{p_n}\right\rangle$$ is a Garside monoid. Moreover, it is of type $\mathcal{R}_8$. Note that monoids with presentations $\langle x, y \mid x^p=y^q\rangle$ have torus knot groups as their groups of fractions.
\end{example}

\begin{example}\cite[Example 5]{Dehornoy-Paris-1999}\label{example2}
For $p$ letters $x_1,x_2,\ldots,x_p$ and a positive integer $m \ge 2$, let $\text{prod}(x_1,x_2,\ldots,x_p;m)$ denotes the word
$$\underbrace{x_1x_2\cdots x_px_1x_2\cdots}_{m\;\text{terms}}.$$ 

Now, let $S=\{x_1,x_2,\ldots,x_n\}$, where $n \ge 2$. By Proposition \ref{prop-dehornoy-paris-1}, the monoid 
\begin{small}
$$\left\langle x_1,x_2,\ldots,x_n~\mid~\text{prod}(x_1,x_2,\ldots,x_n;m)=\text{prod}(x_2,x_3,\ldots, x_n, x_{1};m)=\cdots=\text{prod}(x_n,x_1,x_2,\ldots,x_{n-1};m)\right\rangle$$
\end{small}
 is a Garside monoid. Further, it is of type $\mathcal{R}_9$.
\par

As a special case of this example, the monoid
$$\langle x_1, x_2,\ldots, x_n \mid x_1 x_2 \cdots x_n=x_2x_3 \cdots x_n x_1 = \cdots =  x_n x_1 \cdots x_{n-1} \rangle$$
has the group of fractions as the fundamental group of the complement of $n$ lines through the origin in $\mathbb{C}^2$ \cite{Randell}. And, the monoid
$$\langle x_1,x_2,\ldots,x_n \mid x_1 x_2=  x_2 x_3= \cdots=  x_n x_1 \rangle$$
has the Artin group of type $I_2(n)$ as its group of fractions.

\end{example}

\begin{proposition}\cite[Proposition 5.3]{Dehornoy-Paris-1999}\label{prop-dehornoy-paris-2}
Consider Garside monoids $M_1,M_2,\ldots,M_n$ and positive integers $p_1,p_2,\ldots,p_n$ for $n \ge 2$. Let $\Delta_i$ denote the minimal Garside element of $M_i$. For each $i$, we assume that:
\begin{enumerate}[(1)]
\item There is a map $\nu_i:M_i\to\mathbb{Z}_{\ge 0}$ satisfying $\nu_i(a)>0$ for all $a\in M_i$ with $a\neq 1$ and $\nu_i(ab)=\nu_i(a)+\nu_i(b)$ for all $a,b\in M_i$.
\item If $M_i$ has only one atom, that is, if $M_i$ is isomorphic to $\mathbb{Z}^+$, then $p_i\geq 2$.
\end{enumerate}
Let $M$ be the quotient of the free product $M_1 \hexstar M_2 \hexstar \cdots \hexstar M_n$ of monoids modulo the congruence $\equiv$ generated by  $\Delta_i^{p_i}= \Delta_j^{p_j}$ for all $i, j$, that is, $$M=(M_1\hexstar M_2  \hexstar \cdots \hexstar M_n)/\equiv.$$ Then $M$ is a Garside monoid.
\end{proposition}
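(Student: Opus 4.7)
The plan is to verify the four defining properties of a Garside monoid for $M$ (atomicity, left and right cancellativity, the lattice property for $\leq_L$, and existence of a Garside element), with the Garside structures of the factors $M_i$ as the input. The natural candidates are: the Garside element is $\Delta := \Delta_1^{p_1} = \cdots = \Delta_n^{p_n}$ in $M$, and the set of atoms is the disjoint union $\bigsqcup_i A_i$, where $A_i$ denotes the set of atoms of $M_i$.

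First I would construct an additive $\mathbb{Z}_{\geq 0}$-valued norm on $M$. Set $d_i := \nu_i(\Delta_i^{p_i})$, let $c := \operatorname{lcm}(d_1,\ldots,d_n)$ and $m_i := c/d_i$. On the free product $M_1 \hexstar \cdots \hexstar M_n$ of monoids, define $\nu(x_{j_1}\cdots x_{j_r}) := \sum_l m_{j_l}\nu_{j_l}(x_{j_l})$, where consecutive syllables come from distinct factors. The choice of the $m_i$ gives $\nu(\Delta_i^{p_i}) = c$ for every $i$, so $\nu$ descends to $M$; it is additive and strictly positive on non-identity elements, because each $\nu_i$ is (and the hypothesis $p_i \geq 2$ whenever $M_i \cong \mathbb{Z}^+$ ensures that no atom of $M_i$ is identified with $\Delta_i^{p_i}$). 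Atomicity of $M$ follows, with atom set $\bigsqcup_i A_i$, since no atom of one factor can coincide with an atom of another for norm reasons.

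Next I would analyse the divisors of $\Delta$. Additivity of $\nu$ bounds the norm of any left divisor of $\Delta$ by $c$, and a syllable argument using the canonical form coming from the free product shows that such a divisor must sit entirely inside a single factor $M_i$. Thus the set of left divisors of $\Delta$ in $M$ equals $\bigcup_i \{x \in M_i : x \leq_L \Delta_i^{p_i}\}$, which is finite, coincides with the set of right divisors by a symmetric argument inside each $M_i$, and generates $M$ (since $\Delta_i$ already generates $M_i$). Hence, once cancellativity and the lattice property are available, $\Delta$ is a Garside element.

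The main obstacle will be cancellativity together with the lattice property. My strategy is to introduce a canonical form for elements of $M$: alternating products $w_1\cdots w_r$ with $w_l \in M_{j_l}\setminus\{1\}$, $j_l \neq j_{l+1}$, normalised so that no $w_l$ is left-divisible by $\Delta_{j_l}^{p_{j_l}}$ (any such occurrence being extracted and shifted to the next syllable via the relation $\Delta_{j_l}^{p_{j_l}} = \Delta_{j_{l+1}}^{p_{j_{l+1}}}$). Uniqueness of this canonical form is the crucial technical step, which I would prove by induction on $\nu$, reducing each step to cancellativity and the lattice property inside a single $M_i$ and to the observation that an equality of the form $a\Delta_i^{p_i} = b\Delta_j^{p_j}$ with $a \in M_i$, $b \in M_j$, $i \neq j$, forces $a \in M_i \cap M_j = \{1\}$ for norm reasons below total norm $2c$. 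Once canonical forms are unique, left and right cancellativity are immediate; left g.c.d.\ of two elements is then built syllable by syllable using the lattice property of each $M_i$ on initial agreeing syllables, while right l.c.m.'s can be obtained dually, or by bounding the candidate inside $\Delta^N$ for $N$ large and using that $\{x : x \leq_L \Delta^N\}$ is finite. These ingredients together establish that $M$ is a Garside monoid.
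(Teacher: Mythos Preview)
The paper does not prove this proposition: it is quoted verbatim from Dehornoy--Paris \cite[Proposition~5.3]{Dehornoy-Paris-1999} and used as a black box, so there is no in-paper proof to compare your proposal against. What follows is therefore an assessment of your sketch on its own terms, together with a remark on how the original source proceeds.

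Your high-level strategy is the correct one and matches the approach of Dehornoy--Paris: build an additive $\mathbb{Z}_{\ge 0}$-valued norm on $M$ by rescaling the $\nu_i$ so that all $\Delta_i^{p_i}$ acquire the same value, take $\Delta=\Delta_1^{p_1}=\cdots=\Delta_n^{p_n}$ as the Garside element, take $\bigsqcup_i A_i$ as the atom set, and reduce the structural claims about $M$ to a normal-form theorem for alternating words. Two points in your write-up need tightening, though.

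First, the logical order is inverted. Your analysis of the divisors of $\Delta$ appeals to a ``syllable argument using the canonical form,'' but you only establish canonical forms (and hence cancellativity) afterwards. In fact even the statement that divisors of $\Delta$ lie in a single $M_i$ already presupposes uniqueness of normal forms in $M$; the free-product normal form alone is not enough, since the relations $\Delta_i^{p_i}=\Delta_j^{p_j}$ are precisely what collapse distinct free-product words. You should prove the normal-form theorem first and derive everything else from it.

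Second, your normal form as stated is not quite right. You require that no syllable $w_l$ be \emph{left}-divisible by $\Delta_{j_l}^{p_{j_l}}$, but then speak of shifting the extracted factor to the \emph{next} syllable, which needs right-divisibility; the two notions happen to coincide here because $\Delta_{j_l}$ is quasi-central in $M_{j_l}$, but you should say so. More seriously, under your normalisation $\Delta$ itself has no canonical representative (the one-syllable word $\Delta_i^{p_i}$ is divisible by $\Delta_i^{p_i}$, and there is no next syllable to absorb it). The standard fix, and the one used in the source, is to write every element as $\Delta^N w$ with $N\ge 0$ maximal and $w$ a reduced alternating word in which no syllable contains a full $\Delta_{j_l}^{p_{j_l}}$; uniqueness of this decomposition is then the genuine technical core, proved by a confluence argument on the rewriting system rather than bare induction on the norm. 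Once that is in hand, cancellativity, the description of divisors of $\Delta$, and the lattice property follow along the lines you indicate.
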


It is easy to see that if the Garside monoids $M_1,M_2,\ldots,M_n$ in Proposition \ref{prop-dehornoy-paris-2} are of type $\mathcal{R}_8$, and if $S_i$ is the set of atoms in $M_i$ satisfying conditions (\ref{condition2}) and (\ref{condition3}), then the monoid $M=(M_1 \hexstar M_2 \hexstar \cdots \hexstar M_n)/\equiv$ is a Garside monoid of type $\mathcal{R}_8$ with the set of atoms $S=S_1\sqcup S_2\sqcup\cdots\sqcup S_n$ satisfying conditions (\ref{condition2}) and (\ref{condition3}). It follows from the proof of the proposition that $S$ is, in fact, the set of atoms in $M$.

\begin{example}\cite[Example 6]{Dehornoy-Paris-1999}\label{example3}
Mixing presentations in examples \ref{example1} and \ref{example2} gives new examples. For example, $$\left\langle x_1,x_2,y_1,y_2,y_3~\mid~x_1^2=x_2^5=y_1y_2y_3y_1=y_2y_3y_1y_2=y_3y_1y_2y_3\right\rangle$$ is a Garside monoid by Proposition \ref{prop-dehornoy-paris-2}. Further, it is of type $\mathcal{R}_8$. 
\end{example}

\begin{example}\cite[Example 7]{Dehornoy-Paris-1999}\label{example4}
Applying Proposition \ref{prop-dehornoy-paris-2} to Artin monoids of type $B_3$ and $A_3$ shows that the monoid
\begin{align*}
&\Bigg\langle x_1,x_2,x_3,y_1,y_2,y_3~\mid~x_1x_2x_1x_2=x_2x_1x_2x_1, \quad x_1x_3=x_3x_1, \quad x_2x_3x_2=x_3x_2x_3,\\
&\hskip3mm y_1y_2y_1=y_2y_1y_2, \quad y_1y_3=y_3y_1, \quad y_2y_3y_2=y_3y_2y_3, \quad (x_1x_2x_3)^6=(y_1y_2y_3y_1y_2y_1)^3\Bigg\rangle
\end{align*}
is a Garside monoid of type $\mathcal{R}_9$.
\end{example}

There are many examples of Garside monoids in the literature, for example, \cite{Birman-Gebhardt-Meneses-2007, Dehornoy-Paris-1999, Franco-Meneses-2003, Garside1969, Gebhardt-2005, Picantin2000}. One can choose Garside monoids of type $\mathcal{R}_8$ and/or of type $\mathcal{R}_9$ from these examples. Then using Proposition \ref{prop-dehornoy-paris-2} one can produce more Garside monoids of type $\mathcal{R}_8$ and/or of type $\mathcal{R}_9$. In \cite{Picantin2000}, the cross product of monoids has been defined, and it has been proved that the cross product of Garside monoids is a Garside monoid. This allows us to construct more Garside monoids of type $\mathcal{R}_8$ and/or of type $\mathcal{R}_9$ once we have some families of such monoids.

\begin{proposition}\label{prop-pairs-of-types-r}
Pairs of types $\mathcal{R}_2$ through $\mathcal{R}_9$ are of type $\mathcal{R}_1$.
\end{proposition}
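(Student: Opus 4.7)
The plan is to verify, for each type $\mathcal{R}_i$ with $2 \le i \le 9$, that the four conditions (\ref{condition1}), (\ref{condition2}), (\ref{condition3}), (\ref{condition5}) of type $\mathcal{R}_1$ hold. Conditions (\ref{condition1}), (\ref{condition2}), (\ref{condition3}) come almost for free. Every pair whose second coordinate is a set of atoms (types $\mathcal{R}_4$--$\mathcal{R}_9$) satisfies (\ref{condition1}) by Remark \ref{rmk2}, and every pair in which condition (\ref{condition4}) is assumed (types $\mathcal{R}_5, \mathcal{R}_7, \mathcal{R}_9$) satisfies (\ref{condition1}) and (\ref{condition3}) by Lemma \ref{lem-conditions-ii-and-iv}. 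Combining these observations with the explicit axiom lists, conditions (\ref{condition1})--(\ref{condition3}) hold throughout $\mathcal{R}_2$--$\mathcal{R}_9$. Types $\mathcal{R}_4$ and $\mathcal{R}_5$ include (\ref{condition5}) by definition and so are already done; the remainder of the proof is therefore devoted to (\ref{condition5}) for the six types $\mathcal{R}_2, \mathcal{R}_3, \mathcal{R}_6, \mathcal{R}_7, \mathcal{R}_8, \mathcal{R}_9$.

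The core case is $\mathcal{R}_3$: a pair $(N, T)$ where $N$ is a Garside monoid carrying a Garside element $\Delta$ with $T\Delta = \Delta T$. Fix $s \in T$ and $x \in N$. The standard Garside-theoretic fact that every element of a Garside monoid left-divides some power of $\Delta$ gives an integer $k \ge 0$ and $z \in N$ with $xz = \Delta^k$. I take $y = z$, so $xy = \Delta^k$. The hypothesis $T\Delta = \Delta T$ says conjugation by $\Delta$ permutes $T$; iterating, there exists $s_k \in T$ with $s\Delta^k = \Delta^k s_k$. Hence
$$sxy = s\Delta^k = \Delta^k s_k = xy \cdot s_k,$$
so $xy \le_L sxy$, establishing (\ref{condition5}).

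For types $\mathcal{R}_8$ and $\mathcal{R}_9$, the pair $(N, B)$ has $B$ equal to the atoms of the Garside monoid $N$; Remark \ref{rmk1} gives $B\Delta = \Delta B$ at no cost, so the $\mathcal{R}_3$ argument applies verbatim. For types $\mathcal{R}_2, \mathcal{R}_6, \mathcal{R}_7$, the definition provides a triple $(N, T, \Delta)$ with $T\Delta = \Delta T$ and a monoid epimorphism $\pi : (N, T) \twoheadrightarrow (M, S)$ (respectively onto $(M, A)$), so $(N, T)$ already fits the $\mathcal{R}_3$ setup. Given $s$ in the generating set of $M$ and $x \in M$, I pick lifts $\tilde s \in T$ and $\tilde x \in N$, invoke the $\mathcal{R}_3$ conclusion to obtain $\tilde y \in N$ and $\tilde w \in N$ with $\tilde s \tilde x \tilde y = \tilde x \tilde y \cdot \tilde w$, and apply $\pi$; since monoid homomorphisms preserve left-divisibility, the outcome $sxy = xy \cdot w$ with $y = \pi(\tilde y)$ and $w = \pi(\tilde w)$ gives $xy \le_L sxy$ in $M$.

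The only nontrivial input is the Garside-theoretic fact that every element of $N$ is a left divisor of some power of $\Delta$; once this is in hand, the remaining work is bookkeeping within the several types, using that atoms lie in every generating set, that $\phi$ permutes atoms (Remark \ref{rmk1}), and that epimorphisms of monoids respect left-divisibility. I expect the main conceptual hurdle to be nothing more than tracking the reductions across the nine definitions cleanly.
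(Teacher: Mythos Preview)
Your proof is correct and follows essentially the same approach as the paper: both arguments hinge on the fact that in the Garside monoid $N$ every element left-divides some power of $\Delta$, so that $T\Delta = \Delta T$ forces $s\Delta^k = \Delta^k t$ for some $t\in T$, and this relation pushes forward along $\pi$. The paper organises the reductions slightly differently (reducing everything to $\mathcal{R}_2$ rather than to $\mathcal{R}_3$) and spells out the construction of $y$ with $xy = \Delta^n$ explicitly rather than citing it, but the mathematical content is the same; one small imprecision worth tightening is your phrase ``$(N,T)$ already fits the $\mathcal{R}_3$ setup'' in the $\mathcal{R}_2$ case, since conditions (\ref{condition1})--(\ref{condition3}) are not assumed for $(N,T)$ there---what you actually use, and all you need, is the Garside structure on $N$ together with $T\Delta = \Delta T$.
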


\begin{proof}
For a right Gaussian monoid $M$, by \cite[Proposition 2.2]{Dehornoy-Paris-1999}, the set of atoms in $M$ is a finite generating set for $M$. With this in mind, together with Remark \ref{rmk1}, Remark \ref{rmk2} and Lemma \ref{lem-conditions-ii-and-iv}, we can conclude that:
\begin{itemize}
\item Pairs of types $\mathcal{R}_4$ and $\mathcal{R}_5$ are of type $\mathcal{R}_1$.
\item Pairs of types $\mathcal{R}_6$ and $\mathcal{R}_7$ are of type $\mathcal{R}_2$.
\item Pairs of types $\mathcal{R}_8$ and $\mathcal{R}_9$ are of type $\mathcal{R}_3$.
\end{itemize} 
Since every Garside monoid is of course a right Gaussian monoid, a pair $(N,T)$ can be viewed as a pair $(M,S)$ of a right Gaussian monoid with $M=N$ and a finite generating set $S=T$ for $M$. Thus, by taking $M=N$, $S=T$, $\pi=\id_{N}$ (where $\id_{N}$ denotes the identity map of $N$), we conclude that a pair of type $\mathcal{R}_3$ is of type $\mathcal{R}_2$. Now, it is enough to prove that a pair of type $\mathcal{R}_2$ is of type $\mathcal{R}_1$.

Let $(M,S)$ be a pair of type $\mathcal{R}_2$. Then, there exist a triple $(N,T,\Delta)$ with $T\Delta=\Delta T$ and an epimorphism $\pi:(N,T)\twoheadrightarrow (M,S)$. In order to prove that the pair $(M,S)$ is of type $\mathcal{R}_1$, we only have to show that it satisfies condition \eqref{condition5}. Let $s\in S$ and $x\in M$ be any elements. Since $\pi$ is surjective and it maps $T$ onto $S$, there exist $s^\prime\in T$ and $x^\prime\in N$ such that $\pi(s^\prime)=s$ and $\pi(x^\prime)=x$. Since divisors of $\Delta$ generate $N$, we can write $x^\prime=d_1d_2\cdots d_n$ for some divisors $d_1,d_2,\ldots,d_n$ of $\Delta$. Here, note that $d_i$'s are not necessarily pairwise distinct. Recall that the map $\phi$ 
given by $x\mapsto (x\backslash \Delta)\backslash \Delta$ is a permutation of the set of divisors of $\Delta$.  It is easy to see that $d\Delta=\Delta\phi(d)$ for a divisor $d$ of $\Delta$. Since $\phi^{n-i}(d_i)$ is a divisor (in particular, a left divisor) of $\Delta$, we have $\phi^{n-i}(d_i)\left(\phi^{n-i}(d_i)\backslash \Delta\right) =\Delta$. Let us set $e_i=\phi^{n-i}(d_i)\backslash \Delta$ for $i=1,2,\ldots,n$. Then, for each $i=1,2,\ldots,n$, we have
\begingroup\allowdisplaybreaks
\begin{align*}
d_i\Delta^{n-i}e_i&=\Delta^{n-i}\phi^{n-i}(d_i)e_i&(\text{since}\;d_i\Delta=\Delta\phi(d_i))\\
&=\Delta^{n-i}\phi^{n-i}(d_i)\left(\phi^{n-i}(d_i)\backslash \Delta\right)&(\text{since}\;e_i=\phi^{n-i}(d_i)\backslash \Delta)\\
&=\Delta^{n-i+1}&(\text{since}\;\phi^{n-i}(d_i)\left(\phi^{n-i}(d_i)\backslash \Delta\right) =\Delta).
\end{align*}
\endgroup
Let $y^\prime=e_ne_{n-1}\cdots e_1$. Then, using $\Delta^{n-i+1}=d_i\Delta^{n-i}e_i$ once for each $i=1,2, \ldots,n$, we get
\begin{eqnarray*}
\Delta^n&=&d_1\Delta^{n-1}e_1\\
&=& d_1d_2\Delta^{n-2}e_2e_1\\
&& \vdots\\
&=& d_1d_2\cdots d_n\Delta^0e_ne_{n-1}\cdots e_1\\
&=& x^\prime y^\prime.
\end{eqnarray*}
We see that
\begingroup\allowdisplaybreaks
\begin{align*}
s^\prime x^\prime y^\prime&=s^\prime \Delta^n&(\text{since}\; x^\prime y^\prime=\Delta^n)\\
&=\Delta^n t^\prime\quad\text{for some}\;t^\prime\in T&(\text{since}\;T\Delta=\Delta T)\\
&=x^\prime y^\prime t^\prime&(\text{since}\; \Delta^n=x^\prime y^\prime).
\end{align*}
\endgroup
Let $y=\pi(y^\prime)$ and $t=\pi(t^\prime)$. Then
\begingroup\allowdisplaybreaks
\begin{align*}
sxy&=\pi(s^\prime x^\prime y^\prime)&(\text{since}\;\pi\;\text{is a morphism of monoids})\\
&=\pi(x^\prime y^\prime t^\prime)&(\text{since}\; s^\prime x^\prime y^\prime=x^\prime y^\prime t^\prime)\\
&=xyt,&
\end{align*}
\endgroup
and hence $xy\leq_L sxy$. Thus, the pair $(M,S)$ satisfies condition (\ref{condition5}), and hence is of type $\mathcal{R}_1$.
\end{proof}

Figure \ref{arrow-diagram} summarises relations among the pairs $\mathcal{R}_i$, where an edge $\mathcal{R}_i\to\mathcal{R}_j$ means that a pair of type $\mathcal{R}_i$ is of type $\mathcal{R}_j$.
\begin{figure}[H]
\begin{center}
\begin{tikzpicture}[scale=0.6]
\node (r1) at (0,4) {$\mathcal{R}_1$};
\node (r2) at (-2,2) {$\mathcal{R}_2$};
\node (r3) at (-4,0) {$\mathcal{R}_3$};
\node (r4) at (2,2) {$\mathcal{R}_4$};
\node (r5) at (4,0) {$\mathcal{R}_5$};
\node (r6) at (0,0) {$\mathcal{R}_6$};
\node (r7) at (2,-2) {$\mathcal{R}_7$};
\node (r8) at (-2,-2) {$\mathcal{R}_8$};
\node (r9) at (0,-4) {$\mathcal{R}_9$};
\draw[->] (r2)--(r1); \draw[->] (r3)--(r2);
\draw[->] (r6)--(r4); \draw[->] (r8)--(r6);
\draw[->] (r7)--(r5); \draw[->] (r9)--(r7);
\draw[->] (r4)--(r1); \draw[->] (r5)--(r4);
\draw[->] (r6)--(r2); \draw[->] (r7)--(r6);
\draw[->] (r8)--(r3); \draw[->] (r9)--(r8);
\end{tikzpicture}
\caption{A diagram for pairs of types $\mathcal{R}_i$}
\label{arrow-diagram}
\end{center}
\end{figure}
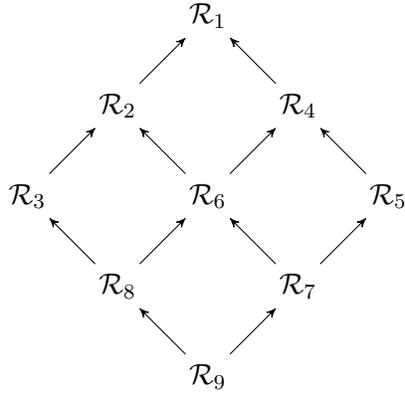

Suppose that $(M,S)$ is a pair that satisfies condition \eqref{condition1}. Then, we have a map $$\alpha:\{(s,t)\in S\times S\mid s\leq_L t\}\to S$$ defined by 
\begin{equation}\label{eqn1}
(s,t)\mapsto (s\backslash t)s.
\end{equation}
Note that $\alpha(s,s)=s$ for all $s\in S$. 

\begin{remark}\label{rmk3}
Let $s,t\in S$ be such that $s\leq_L t$. Then $t=s(s\backslash t)$ and $\alpha(s,t)=(s\backslash t)s$. Suppose that $f$ is a right l.c.m. selector on $S$ in $M$. Since $f(s,t)$ represents the element $s\backslash t$, we have $t =_M sf(s,t)$ and $\alpha(s,t)=_M f(s,t)s$. Thus, $s^{-1}ts =_G f(s,t)s =_G \alpha(s,t)$, where $G$ is the right Gaussian group corresponding to $M$.
\end{remark}

Suppose that $(M,S)$ is a pair that satisfies conditions \eqref{condition2} and \eqref{condition3}. Then, we have a map $\beta:S\times S\to S$ defined by 
\begin{equation}\label{eqn2}
(s,t)\mapsto (s\backslash t)\backslash(s\vee t).
\end{equation}
Note that $\beta(s,s)=s$ for all $s\in S$.

\begin{remark}\label{rmk4}
Let $s$ and $t$ be elements in $S$. Since $(M,S)$ satisfies condition \eqref{condition2}, we have $s\vee t=(s\backslash t)\left((s\backslash t)\backslash(s\vee t)\right)$. In other words, $s(s\backslash t)=(s\backslash t)\left((s\backslash t)\backslash(s\vee t)\right)$. Suppose that $f$ is a right l.c.m. selector on $S$ in $M$. Since $f(s,t)$ represents the element $s\backslash t$ and $\beta(s,t)=(s\backslash t)\backslash(s\vee t)$, we have $sf(s,t) =_M f(s,t)\beta(s,t)$. Thus, $f(s,t)^{-1}sf(s,t) =_G \beta(s,t)$, where $G$ is the right Gaussian group corresponding to $M$. 
\end{remark}

Suppose that $(M,S)$ is a pair that satisfies conditions \eqref{condition1}, \eqref{condition2} and \eqref{condition3}. Let $\alpha$ and $\beta$ be the maps defined by \eqref{eqn1} and \eqref{eqn2}, respectively. Let $G$ be the right Gaussian group corresponding to $M$ and $f$ a right l.c.m. selector on $S$ in $M$. Then, the {\it right $f$-equivalence} on $S^{(S\cup S^{-1})^*}$ is defined to be the equivalence relation generated by the following transformations:\\[8pt]
($\text{R}_\alpha$)\hskip52mm $x^{-1}s^{-1}tsx\longleftrightarrow x^{-1}\alpha(s,t)x$\\[8pt]
where $s,t\in S$ with $s\leq_L t$ and $x\in(S\cup S^{-1})^*$.\\[8pt]
($\text{R}_{f,\beta}$)\hskip43mm $x^{-1}f(s,t)^{-1}sf(s,t)x\longleftrightarrow x^{-1}\beta(s,t)x$\\[8pt]
where $s,t\in S$ and $x\in(S\cup S^{-1})^*$.\\[8pt]
($\text{R}_G$)\hskip60mm $x^{-1}sx\longleftrightarrow y^{-1}sy$\\[8pt]
where $s\in S$ and $x,y\in(S\cup S^{-1})^*$ with $x=_G y$.\\

For $x,y\in S^{(S\cup S^{-1})^*}$, we denote $x\simeq_R^f y$ if $x$ and $y$ are right $f$-equivalent. 

\begin{lemma}\label{lem-f-equivalence}
Let $x,y\in S^{(S\cup S^{-1})^*}$ and $z\in(S\cup S^{-1})^*$. If $x\simeq_R^f y$, then $z^{-1}xz\simeq_R^f z^{-1}yz$.
\end{lemma}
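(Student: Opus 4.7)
The plan is to induct on the length of a chain of elementary transformations witnessing $x \simeq_R^f y$, reducing the problem to showing that conjugation by $z$ preserves each of the three generating transformations $(\text{R}_\alpha)$, $(\text{R}_{f,\beta})$, $(\text{R}_G)$. First, I would note that the output stays in $S^{(S\cup S^{-1})^*}$: if $x = u^{-1} s u$ with $u \in (S\cup S^{-1})^*$ and $s \in S$, then $z^{-1} x z = (uz)^{-1} s (uz)$ is again a positive symmetric word, and similarly for $y$. So the claim makes sense.

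For the inductive step, it suffices to verify the statement when $x$ and $y$ are related by a single elementary transformation. If $x = w^{-1} s^{-1} t s w$ and $y = w^{-1} \alpha(s,t) w$ come from an $(\text{R}_\alpha)$ move, then
\begin{equation*}
z^{-1} x z = (wz)^{-1} s^{-1} t s (wz), \qquad z^{-1} y z = (wz)^{-1} \alpha(s,t)(wz),
\end{equation*}
and these are related by the $(\text{R}_\alpha)$ move with the word $w$ replaced by $wz \in (S\cup S^{-1})^*$. The case of $(\text{R}_{f,\beta})$ is identical with $f(s,t)$ and $\beta(s,t)$ in place of $s^{-1} t s$ and $\alpha(s,t)$. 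For $(\text{R}_G)$, if $x = w_1^{-1} s w_1$ and $y = w_2^{-1} s w_2$ with $w_1 =_G w_2$, then $z^{-1} x z = (w_1 z)^{-1} s (w_1 z)$ and $z^{-1} y z = (w_2 z)^{-1} s (w_2 z)$; since $w_1 =_G w_2$ implies $w_1 z =_G w_2 z$, these are related by an $(\text{R}_G)$ move.

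Induction on the number of elementary moves in a chain from $x$ to $y$ then yields $z^{-1} x z \simeq_R^f z^{-1} y z$. There is no real obstacle: the argument is a purely formal verification that right conjugation by $z$ commutes with each of the three generators of the equivalence relation, once one unpacks the definition of $\simeq_R^f$.
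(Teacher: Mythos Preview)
Your proof is correct and follows the same approach as the paper: verify that conjugation by $z$ preserves each of the three generating transformations $\text{R}_\alpha$, $\text{R}_{f,\beta}$, and $\text{R}_G$, then conclude by induction on the length of a chain. The paper's version is simply more terse, asserting that each single-step case is ``clear'' from the form of the transformations, whereas you spell out the verification explicitly.
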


\begin{proof}
Let $x,y\in S^{(S\cup S^{-1})^*}$ and $z\in(S\cup S^{-1})^*$ such that $x\simeq_R^f y$. It is clear that if $x$ and $y$ differ by a single transformation $\text{R}_\alpha$ or $\text{R}_{f,\beta}$ or $\text{R}_G$, then by the transformation itself, $z^{-1}xz\simeq_R^f z^{-1}yz$. Since the right $f$-equivalence on $S^{(S\cup S^{-1})^*}$ is generated by transformations $\text{R}_\alpha$, $\text{R}_{f,\beta}$ and $\text{R}_G$, we have $z^{-1}xz\simeq_R^f z^{-1}yz$ in the general case as well.
\end{proof}

The next result is motivated by \cite[Theorem 1.4]{Kamada1996}.

\begin{theorem}\label{thm-right-equivalence}
Suppose that $(M,S)$ is a pair of type $\mathcal{R}_1$. Let $G$ be the right Gaussian group corresponding to $M$ and $f$ a right l.c.m. selector on $S$ in $M$. Then, two words in $S^{(S\cup S^{-1})^*}$ are right $f$-equivalent if and only if they represent the same element in $G$.
\end{theorem}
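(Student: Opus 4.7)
The forward direction is routine---each of the three transformations preserves the element represented in $G$. For $\text{R}_G$ this is immediate; for $\text{R}_\alpha$ and $\text{R}_{f,\beta}$ it is the content of Remarks \ref{rmk3} and \ref{rmk4}, which give $s^{-1} t s =_G \alpha(s, t)$ (when $s \leq_L t$) and $f(s, t)^{-1} s f(s, t) =_G \beta(s, t)$. Together with Lemma \ref{lem-f-equivalence}, which localises the equivalence to subwords, this shows $\simeq_R^f$-equivalent words are equal in $G$.

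For the converse, my plan rests on two ingredients: the Ore embedding $M \hookrightarrow G$ with $G = M M^{-1}$, and condition (\ref{condition5})---the defining condition of type $\mathcal{R}_1$ beyond what the underlying Gaussian structure provides. The first lets us use $\text{R}_G$ to replace any conjugator $x$ in a positive symmetric word $x^{-1} s x$ by $p q^{-1}$ with $p, q \in M$. The second lets us extend any positive conjugator so that the conjugation of an atom lands back inside $M$: given $s \in S$ and $z \in M$, condition (\ref{condition5}) yields $r \in M$ and some $t \in M$ with $s z r =_M z r t$, whence $(zr)^{-1} s (zr) =_G t$; a length argument in the Gaussian monoid shows $t$ is in fact an atom.

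Given $x_1^{-1} s_1 x_1 =_G x_2^{-1} s_2 x_2$, I would first use $\text{R}_G$ to pass to $x_i =_G p_i q_i^{-1}$ with $p_i, q_i \in M$, then apply the extension from condition (\ref{condition5}) iteratively so that each conjugator can be taken as a positive word in $S^*$. The two resulting words have the form $y_i^{-1} t_i y_i$ with $y_i \in S^*$ and $t_i \in S$, and I would finish by induction on $\ell(y_1) + \ell(y_2)$. The base case $y_1 = y_2 = \epsilon$ forces $t_1 = t_2$ in $M$ by the Ore embedding. In the inductive step, stripping a leading letter $s$ from $y_1$, one either applies $\text{R}_\alpha$ (if $s \leq_L t_1$) to collapse $s^{-1} t_1 s$ to $\alpha(s, t_1) \in S$, or else uses $\text{R}_G$ together with a complemented relation $s f(s, t) = t f(t, s)$ to expose a residue block $f(s, t)^{-1} s f(s, t)$ that $\text{R}_{f,\beta}$ collapses to $\beta(s, t) \in S$. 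In either case the combined length strictly drops.

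I expect the main obstacle to be the normalisation step: condition (\ref{condition5}) only produces a $G$-level identity of the shape $p^{-1} s p =_G r t r^{-1}$, and converting this into an explicit $\simeq_R^f$-chain requires organising the rewriting so that every elementary step in the complemented presentation of $M$ corresponds to a bounded number of moves drawn from $\text{R}_\alpha$, $\text{R}_{f,\beta}$, $\text{R}_G$, applied inside a surrounding conjugator via Lemma \ref{lem-f-equivalence}. This decomposition should run in parallel with Dehornoy and Paris' derivation of the complemented presentation of $M$ in \cite[Theorem 4.1]{Dehornoy-Paris-1999}, and with Kamada's corresponding statement for braid quandles in \cite[Theorem 1.4]{Kamada1996}, which the authors cite as motivation for the theorem.
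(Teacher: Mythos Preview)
Your outline matches the paper's proof closely. The paper also treats the forward direction via Remarks \ref{rmk3} and \ref{rmk4}; for the converse it isolates the positive-conjugator case as Lemma \ref{lem-1-right-equivalence} (your $\text{R}_\alpha$/$\text{R}_{f,\beta}$ dichotomy), packages condition (\ref{condition5}) together with the fact that the resulting $t$ lies in $S$ as Lemmas \ref{lem-2-right-equivalence}--\ref{lem-3-right-equivalence}, invokes the Ore fraction form $x =_G yz^{-1}$ (Lemma \ref{lem-4-right-equivalence}), and assembles these in Lemma \ref{lem-5-right-equivalence}. One simplification in the paper relative to your plan: rather than normalising both sides of $x_1^{-1} s_1 x_1 =_G x_2^{-1} s_2 x_2$ to positive conjugators in parallel, it first rewrites this as $(x_1 x_2^{-1})^{-1} s_1 (x_1 x_2^{-1}) =_G s_2$, decomposes $x_1 x_2^{-1} =_G y z^{-1}$, and then extends \emph{only} $y$ via condition (\ref{condition5}) to $yw$ with $(yw)^{-1} s_1 (yw) =_G u \in S$; the same $w$ automatically makes $zw$ a positive conjugator taking $s_2$ to $u$, so Lemma \ref{lem-1-right-equivalence} applies to both sides at once. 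This sidesteps the bookkeeping you flag in your final paragraph.

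There is one genuine gap. Your induction is on word length $\ell(y_1) + \ell(y_2)$, and you assert that in the $\text{R}_{f,\beta}$ branch ``the combined length strictly drops''. This need not hold. In that branch you rewrite the conjugator $u y_1'$ as $f(t_1, u)\, z$ via $\text{R}_G$, where $u y_1' =_M f(t_1, u)\, z$; the new conjugator is $z$. But $M$ is not assumed homogeneous, so equal elements of $M$ can be represented by positive words of different lengths (already in the type-$\mathcal{R}_8$ monoid $\langle x, y \mid x^2 = y^3 \rangle$ of Example \ref{example1}), and there is no control on $\ell(z)$ relative to $\ell(u y_1')$. The paper's fix is to induct instead on $\nu(\bar y_1)$, where $\nu : M \to \mathbb{Z}_{\ge 0}$ is the atomic norm of \cite[Proposition 2.1]{Dehornoy-Paris-1999} satisfying $\nu(ab) \ge \nu(a) + \nu(b)$ and $\nu(a) = 0 \Leftrightarrow a = 1$. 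Since $f(t_1, u)$ represents a nontrivial element precisely when $u \nleq_L t_1$, one gets $\nu(\bar z) < \nu(\overline{u y_1'})$, and the induction goes through. With this correction your argument is complete and essentially identical to the paper's.
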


We require some lemmas to prove the preceding theorem.

\begin{lemma}\label{lem-1-right-equivalence}
Let $(M,S)$ be a pair that satisfies conditions \eqref{condition1}, \eqref{condition2} and \eqref{condition3}. Let $G$ be the right Gaussian group corresponding to $M$ and $f$ a right l.c.m. selector on $S$ in $M$. Let $s,t\in S$ and $x\in S^*$ be such that $x^{-1}sx =_G t$. Then $x^{-1}sx\simeq_R^f t$.
\end{lemma}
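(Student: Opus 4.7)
The plan is to induct on the atomic norm $\|x\|$, that is, the supremum over all decompositions $x =_M a_1 a_2 \cdots a_k$ into atoms $a_i$ of the integer $k$. Since $M$ is atomic, this norm takes values in $\mathbb{Z}_{\geq 0}$, and in any atomic monoid one has $\|xy\| \geq \|x\| + \|y\|$ (see \cite[Proposition 2.1]{Dehornoy-Paris-1999}), making the induction well-founded. For the base case $\|x\| = 0$, conicality of $M$ forces $x$ to be the empty word, so $x^{-1}sx = s$; combining $s =_G t$ with the standing assumption that the elements of $S$ are pairwise distinct in $M$ (and $M \hookrightarrow G$) then gives $s = t$ and the equivalence is trivial.

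For the inductive step, I would write $x = x_1 x'$ with $x_1 \in S$ and $x' \in S^*$, and note that $sx =_G xt$ yields $sx =_M xt$ since $M$ is Ore. Split into two cases. If $s = x_1$, left cancellativity yields $sx' =_M x't$, induction on $x'$ (whose norm is strictly smaller) provides $(x')^{-1} s x' \simeq_R^f t$, and a single application of $\textup{R}_\alpha$ with $a = b = s$ (using $\alpha(s,s) = s$) rewrites $x^{-1} s x = (x')^{-1} s^{-1} s s x'$ as $(x')^{-1} s x'$, completing this case.

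If $s \neq x_1$, I would first observe that distinct elements of $S$ must be pairwise non-comparable under $\leq_L$: otherwise $f(s,t)$ would be a non-empty positive word representing the identity, contradicting conicality of $M$. Hence $v := s \backslash x_1 \neq 1$. Since both $s$ and $x_1$ left-divide $sx = x_1 x' t$, the right l.c.m.\ $s \vee x_1 = sv$ left-divides $sx$, and left-cancelling $s$ yields $v \leq_L x$. Picking a positive word $r_0 \in S^*$ with $x =_M f(s, x_1)\, r_0$, the transformation $\textup{R}_G$ replaces $x^{-1} s x$ by $r_0^{-1} f(s, x_1)^{-1} s f(s, x_1) r_0$, which $\textup{R}_{f,\beta}$ then collapses to $r_0^{-1} \beta(s, x_1) r_0$, where $\beta(s, x_1) \in S$ by condition \eqref{condition3}. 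The inequality $\|x\| \geq \|v\| + \|r_0\|$ together with $\|v\| \geq 1$ yields $\|r_0\| < \|x\|$, so induction finishes the case.

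The main delicate point is picking the correct inductive measure. In the second case $r_0$ is not a sub-word of $x$, and its word length $\ell(r_0)$ could in principle exceed $\ell(x)$ when $M$ is not homogeneous, so a naive induction on $\ell(x)$ fails. The atomic norm $\|\cdot\|$, which is superadditive on products in any atomic monoid, is the natural well-founded measure that makes the recursion go through.
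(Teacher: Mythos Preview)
Your proof is correct and follows essentially the same route as the paper: induct on the atomic norm (the paper's $\nu$ from \cite[Proposition 2.1]{Dehornoy-Paris-1999}), peel off the first letter $u$ of $x$, and handle the two cases via $\text{R}_\alpha$ and $\text{R}_G$/$\text{R}_{f,\beta}$ respectively. Your case split $s = u$ versus $s \neq u$ is slightly narrower than the paper's $u \leq_L s$ versus $u \nleq_L s$; your observation that a right l.c.m.\ selector (being a complement) forces distinct generators to be $\leq_L$-incomparable is correct and makes the two splits coincide, though strictly speaking it requires the harmless side assumption that no element of $S$ represents $1$ in $M$ (conicality alone only gives that a non-empty word representing $1$ must have every letter equal to $1$).
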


\begin{proof}
Since $M$ is atomic, by \cite[Proposition 2.1]{Dehornoy-Paris-1999}, there exists a map $\nu:M\to \mathbb{Z}_{\ge 0}$ such that $\nu(a)+\nu(b)\leq \nu(ab)$ for all $a,b\in M$, and $\nu(a)=0$ if and only if $a=1$. For $w\in S^*$, let $\bar{w}$ denote the element in $M$ represented by $w$. We prove the lemma by induction on $\nu(\bar{x})$. Suppose that $x^{-1}sx =_G t$ and $\nu(\bar{x})=0$. Then $\bar{x}=1$, and it follows from properties of $\nu$ that $x$ is the empty word. This implies that $s =_M t$, i.e. $s$ and $t$ are the same elements of $S$. Thus $x^{-1}sx\simeq_R^f s\simeq_R^f t$. Suppose that the lemma is true when $\nu(\bar{x})\leq n$. Let $x\in S^*$ be such that $\nu(\bar{x})=n+1$ and $x^{-1}sx =_G t$. Let $x=uy$ for $u\in S$ and $y\in S^*$. Then $x^{-1}sx =_G y^{-1}u^{-1}suy$, and hence $y^{-1}u^{-1}suy =_G t$. Let us consider the following cases:
\begin{enumerate}[(1)]
\item $u\leq_L s$: By transformation $\text{R}_\alpha$, we have $$y^{-1}u^{-1}suy\simeq_R^f y^{-1}\alpha(u,s)y,$$ where $\alpha$ is the map defined by \eqref{eqn1}. Since $x=uy$ and $u\neq 1$, it follows from properties of $\nu$ that $\nu(\bar{y})<\nu(\bar{x})$. In other words, $\nu(\bar{y})\leq n$. Using Remark \ref{rmk3}, we get $$y^{-1}\alpha(u,s)y =_G y^{-1}u^{-1}suy =_G t.$$ By induction hypothesis, we have $$y^{-1}\alpha(u,s)y\simeq_R^f t,$$ and hence $$x^{-1}sx\simeq_R^f y^{-1}u^{-1}suy\simeq_R^f y^{-1}\alpha(u,s)y\simeq_R^f t.$$
\item $u\nleq_L s$: Since $y^{-1}u^{-1}suy =_G t$ and $M$ embeds in $G$, we have $suy =_M uyt$. Thus, the words $suy$ and $uyt$ both represent the common right multiple $\bar{s}\bar{u}\bar{y}=\bar{u}\bar{y}\bar{t}$ of $\bar{s}$ and $\bar{u}$ in $M$. Since $sf(s,u)$ represents the right l.c.m. of $\bar{s}$ and $\bar{u}$, we have $suy =_M sf(s,u)z$ for some $z\in S^*$. By cancellation on the left, we get $uy =_M f(s,u)z$. Thus
\begin{align*}
y^{-1}u^{-1}suy&\simeq_R^f z^{-1}f(s,u)^{-1}sf(s,u)z&(\text{by transformation}\; \text{R}_G)\\
&\simeq_R^f z^{-1}\beta(s,u)z&(\text{by transformation}\; \text{R}_{f,\beta}),
\end{align*} 
where $\beta$ is the map defined by \eqref{eqn2}. Since $u(u\backslash s)=s(s\backslash u)$ and $u\nleq_L s$, we have $s\backslash u\neq1$. In other words, $f(s,u)$ does not represent the identity element of $M$. Since $x =_M uy =_M f(s,u)z$, it follows from properties of $\nu$ that $\nu(\bar{z})<\nu(\bar{x})$. Thus, we have $\nu(\bar{z})\leq n$. Using Remark \ref{rmk4}, we get $$z^{-1}\beta(s,u)z =_G y^{-1}u^{-1}suy =_G t.$$ By induction hypothesis, we have$$z^{-1}\beta(s,u)z\simeq_R^f t.$$ Hence, we obtain $$x^{-1}sx\simeq_R^f y^{-1}u^{-1}suy\simeq_R^f z^{-1}\beta(s,u)z\simeq_R^f t,$$
and the proof is complete. \qedhere
\end{enumerate}
\end{proof}

\begin{lemma}\label{lem-2-right-equivalence}
Let $(M,S)$ be a pair that satisfies conditions (\ref{condition1}), (\ref{condition2}) and (\ref{condition3}). Let $s\in S$ and $x,y\in S^*$ be such that $sx =_M xy$. Then, there exists $t\in S$ such that $y =_M t$.
\end{lemma}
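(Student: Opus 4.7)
The plan is to induct on $\nu(\bar{x})$, where $\nu\colon M\to\mathbb{Z}_{\ge 0}$ is the additive map from \cite[Proposition~2.1]{Dehornoy-Paris-1999} (already used for Lemma~\ref{lem-1-right-equivalence}) that vanishes only at the identity. This is the natural induction measure because the word length $\ell$ is not well-defined on $M$-equivalence classes in general, whereas $\nu$ is canonically attached to any atomic monoid. In the base case $\nu(\bar x)=0$ we have $\bar x=1$, so $x=\epsilon$ and the relation $sx=_M xy$ collapses to $s=_M y$; we may then take $t=s$.

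For the inductive step, write $x=ux'$ with $u\in S$ and $x'\in S^*$, and split on whether $u\leq_L s$. If $u\leq_L s$, then condition~\eqref{condition1} yields $\alpha(u,s)=(u\backslash s)u\in S$, and from $s=_M u(u\backslash s)$ we obtain $su=_M u\,\alpha(u,s)$ in $M$; substituting into $sux'=_M ux'y$ and left-cancelling $u$ reduces the problem to $\alpha(u,s)\,x'=_M x'y$, to which the induction hypothesis applies (valid since $\nu(\bar u)>0$ forces $\nu(\overline{x'})<\nu(\bar x)$). If instead $u\not\leq_L s$, the element $\overline{sux'}=\overline{ux'y}$ is a common right multiple of $\bar s$ and $\bar u$, hence a right multiple of the right l.c.m.\ $\overline{s\vee u}$; writing $sux'=_M sf(s,u)\,z$ for a suitable $z\in S^*$ and left-cancelling $s$ gives $ux'=_M f(s,u)\,z$. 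Conditions~\eqref{condition2} and~\eqref{condition3} supply $\beta(s,u)\in S$ with $sf(s,u)=_M f(s,u)\,\beta(s,u)$ (as in Remark~\ref{rmk4}); chaining the two presentations $sux'=_M f(s,u)\,\beta(s,u)\,z$ and $sux'=_M ux'y =_M f(s,u)\,zy$ and left-cancelling $f(s,u)$ produces $\beta(s,u)\,z=_M zy$, which is again of the form covered by the induction hypothesis.

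The point of care is precisely the strict descent needed in the second case: one requires $\nu(\bar z)<\nu(\bar x)$ to close the induction, and this is where the hypothesis $u\not\leq_L s$ is used. It forces $s\backslash u\neq 1$, so $f(s,u)$ represents a non-identity element and $\nu(\overline{f(s,u)})>0$; the relation $\bar x=\overline{f(s,u)}\cdot\bar z$, combined with the superadditivity of $\nu$, then yields the strict inequality. Apart from this choice of $\nu$ (rather than $\ell$) as the induction parameter, the argument is a fairly direct application of conditions~\eqref{condition1}--\eqref{condition3} in the two natural cases, mirroring the dichotomy used in the proof of Lemma~\ref{lem-1-right-equivalence}.
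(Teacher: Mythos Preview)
Your proof is correct and follows essentially the same approach as the paper: both induct on $\nu(\bar x)$ using the norm from \cite[Proposition~2.1]{Dehornoy-Paris-1999}, split into the cases $u\leq_L s$ and $u\nleq_L s$ for the leading letter $u$ of $x$, and reduce via $\alpha(u,s)$ or $\beta(s,u)$ respectively, using left cancellation and the fact that $s\backslash u\neq 1$ in the second case to obtain strict descent. The arguments are identical up to variable names.
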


\begin{proof}
By \cite[Proposition 2.1]{Dehornoy-Paris-1999}, we have a map $\nu:M\to \mathbb{Z}_{\ge 0}$ such that $\nu(a)+\nu(b)\leq \nu(ab)$ for all $a,b\in M$, and $\nu(a)=0$ if and only if $a=1$. For $w\in S^*$, let $\bar{w}$ denote the element in $M$ represented by $w$. We prove the lemma by induction on $\nu(\bar{x})$. Suppose $sx =_M xy$ and $\nu(\bar{x})=0$. Then $\bar{x}=1$, and it follows from by the properties of $\nu$ that $x$ is the empty word. This implies that $y =_M s$, and the lemma holds.  Assume that the lemma is true when $\nu(\bar{x})\leq n$. Suppose $s\in S$ and $x,y\in S^*$ be such that $\nu(\bar{x})= n+1$ and $sx =_M xy$. Let $x=uz$ for $u\in S$ and $z\in S^*$. Then $suz =_M uzy$. We now consider two cases below:
\begin{enumerate}[(1)]
\item $u\leq_L s$: Since $x=uz$ and $u\neq 1$, it follows from properties of $\nu$ that $\nu(\bar{z})<\nu(\bar{x})$. In other words, $\nu(\bar{z})\leq n$. Let $\alpha$ be the map defined by \eqref{eqn1}. Then, Remark \ref{rmk3} gives $su =_M u\alpha(u,s)$. Since $suz =_M uzy$, we have $u\alpha(u,s)z =_M uzy$. Cancellation on the left gives $\alpha(u,s)z =_M zy$. By induction hypothesis, there exists $t\in S$ such that $y =_M t$.
\item $u\nleq_L s$: Since $suz =_M uzy$, the words $suz$ and $uzy$ both represent the common right multiple $\bar{s}\bar{u}\bar{z}=\bar{u}\bar{z}\bar{y}$ of $\bar{s}$ and $\bar{u}$ in $M$. Let $f$ be a right l.c.m. selector on $S$ in $M$. Since $sf(s,u)$ represents the right l.c.m. of $\bar{s}$ and $\bar{u}$, we have $suz =_M sf(s,u)w$ for some  $w\in S^*$. Cancellation on the left gives $uz =_M f(s,u)w$. Since $u\nleq_L s$, we have $s\backslash u\neq1$. Thus, $f(s,u)$ does not represent the identity element of $M$. Since $x =_M uz =_M f(s,u)w$, it follows from properties of $\nu$ that $\nu(\bar{w})<\nu(\bar{x})$, and hence $\nu(\bar{w})\leq n$. Let $\beta$ be the map defined by \eqref{eqn2}. Then
\begin{align*}
f(s,u)\beta(s,u)w& =_M sf(s,u)w&(\text{by Remark \ref{rmk4}})\\
&=_M suz&(\text{since}\; f(s,u)w =_M uz)\\
&=_M uzy&(\text{since}\; suz =_M uzy)\\
&=_M f(s,u)wy&(\text{since}\; uz =_M f(s,u)w).
\end{align*}
Left cancellation gives $\beta(s,u)w =_M wy$. Thus, by induction hypothesis, there exists $t\in S$ such that $y =_M t$, and the proof is complete. \qedhere
\end{enumerate}
\end{proof}

\begin{lemma}\label{lem-3-right-equivalence}
Let $(M,S)$ be a pair of type $\mathcal{R}_1$. Then, for each $s\in S$ and each $x\in S^*$, there exist $t=t(s,x)$ in $S$ and $y=y(s,x)$ in $S^*$ such that $sxy =_M xyt$.
\end{lemma}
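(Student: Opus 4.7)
The plan is to combine condition (\ref{condition5}), which is the distinguishing feature of pairs of type $\mathcal{R}_1$, with Lemma \ref{lem-2-right-equivalence} in order to promote the resulting right-multiplier from a general element of $M$ to a single letter of $S$. The statement is an existence claim with two unknowns ($t$ and $y$), so the strategy is to produce $y$ first via condition (\ref{condition5}), and then let Lemma \ref{lem-2-right-equivalence} identify the ``leftover'' factor as an element of $S$.

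First I would pass from the word $x\in S^*$ to the element $\bar{x}\in M$ it represents and apply condition (\ref{condition5}) to the pair $(s,\bar{x})$. This yields some $\bar{y}\in M$ with $\bar{x}\bar{y}\leq_L s\bar{x}\bar{y}$, i.e.\ there exists $\bar{u}\in M$ such that $s\bar{x}\bar{y}=\bar{x}\bar{y}\bar{u}$ in $M$. Because $S$ generates $M$, I can choose positive words $y,z\in S^*$ with $\bar{y}=_M y$ and $\bar{u}=_M z$, so that $sxy=_M xyz$.

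Next I would feed this equality into Lemma \ref{lem-2-right-equivalence}, playing the role of its ``$x$'' with the concatenated word $xy\in S^*$ and the role of its ``$y$'' with $z\in S^*$. The hypothesis $s(xy)=_M (xy)z$ is exactly the identity we have just manufactured, so the lemma supplies some $t\in S$ with $z=_M t$. Substituting back gives $sxy=_M xyt$, which is precisely the claim; I would take the advertised $y(s,x)$ and $t(s,x)$ to be this $y$ and $t$.

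I expect no substantive obstacle: condition (\ref{condition5}) produces the requisite left-divisibility $xy\leq_L sxy$ essentially by assumption, and Lemma \ref{lem-2-right-equivalence} is tailor-made to upgrade the complementary factor on the right to a single generator from $S$. The only bookkeeping is that condition (\ref{condition5}) is phrased at the level of elements of $M$ while the conclusion is phrased at the level of positive words, a gap bridged by the trivial observation that every element of $M$ admits some representative in $S^*$. Conceptually, this lemma is the word-level reformulation of condition (\ref{condition5}) that will be needed to run the right $f$-equivalence arguments in Theorem \ref{thm-right-equivalence}.
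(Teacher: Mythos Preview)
Your proposal is correct and follows essentially the same approach as the paper: apply condition (\ref{condition5}) to obtain $y,z\in S^*$ with $sxy=_M xyz$, then invoke Lemma \ref{lem-2-right-equivalence} to replace $z$ by some $t\in S$. The paper's proof is the same argument, only slightly terser in that it passes directly to word representatives without separately naming the underlying monoid elements.
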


\begin{proof}
Let $s\in S$ and $x\in S^*$. For $w\in S^*$, let $\bar{w}$ denote the element in $M$ represented by $w$. By condition \eqref{condition5}, there exist $y=y(s,x)$ and $z=z(s,x)$ in $S^*$ such that $\bar{s}\bar{x}\bar{y}=\bar{x}\bar{y}\bar{z}$. Thus, we have $sxy =_M xyz$. By Lemma \ref{lem-2-right-equivalence}, there exists $t\in S$ such that $z =_M t$. Hence, we have $sxy =_M xyt$.
\end{proof}

\begin{lemma}\label{lem-4-right-equivalence}
Let $(M,S)$ be a pair of right Gaussian monoid $M$ and a finite generating set $S$ for $M$, and $G$ the group of fractions of $M$. Then, for each element $x\in(S\cup S^{-1})^*$, there exist $y,z\in S^*$ such that $x =_G yz^{-1}$.
\end{lemma}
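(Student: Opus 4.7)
The plan is to induct on the length of $x$ as a word in $(S\cup S^{-1})^*$. The only tool I really need from the preceding material is that $M$ is an Ore monoid embedding in $G$ and that any two elements of $M$ admit a common right multiple (this is a consequence of right $\mathrm{l.c.m.}$ existence in the lattice $(M,\leq_L)$).

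\textbf{Base case.} If $x$ is the empty word, take $y=z=\epsilon\in S^*$; then $x=_G yz^{-1}$.

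\textbf{Inductive step.} Suppose the claim is proved for all words of length at most $n$, and let $x\in(S\cup S^{-1})^*$ have length $n+1$. Write $x=x's^{\delta}$ with $x'$ of length $n$, $s\in S$, and $\delta\in\{-1,1\}$. By the induction hypothesis, there exist $y',z'\in S^*$ with $x'=_G y'(z')^{-1}$.

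If $\delta=-1$, then in $G$ we have
$$x=_G y'(z')^{-1}s^{-1}=_G y'(sz')^{-1},$$
so the words $y:=y'\in S^*$ and $z:=sz'\in S^*$ work.

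If $\delta=1$, the task is to rewrite $(z')^{-1}s$ in the form $u\, v^{-1}$ with $u,v\in S^*$. The elements of $M$ represented by $z'$ and $s$ admit a common right multiple, so there exist $u,v\in S^*$ with
$$z' u \;=_M\; s\, v.$$
Passing to the group of fractions gives $(z')^{-1}s=_G u\, v^{-1}$, and hence
$$x=_G y'(z')^{-1}s=_G y' u\, v^{-1}.$$
Thus $y:=y'u\in S^*$ and $z:=v\in S^*$ are the required positive words.

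\textbf{Main obstacle.} There is essentially no obstacle beyond invoking the Ore condition: the only nontrivial step is the passage from $(z')^{-1}s$ to a fraction $uv^{-1}$, which is exactly the existence of common right multiples in the right Gaussian monoid $M$. The induction then takes care of propagating the one-letter reduction to arbitrary words.
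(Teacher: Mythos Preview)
Your induction is correct. The only substantive step is the case $\delta=1$, and there the existence of a common right multiple of $\bar{z'}$ and $\bar{s}$ in $M$ (guaranteed by the right l.c.m.\ property of a right Gaussian monoid) does exactly what you need. One small remark: you do not actually use that $M$ embeds in $G$; you only use that $z'u=_M sv$ implies $z'u=_G sv$, which is automatic since $G$ has the same presentation as $M$.

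Your route is genuinely different from the paper's. The paper does not argue directly: it invokes \cite[Theorem~4.2]{Dehornoy-Paris-1999}, which says that for a right Gaussian monoid the right l.c.m.\ selector $f$ satisfies condition~$\mathrm{III}_{\mathrm{R}}$, i.e.\ the word-reversing procedure $R_R^f$ terminates on every word in $(S\cup S^{-1})^*$ and outputs a fraction $yz^{-1}$. That machinery is more sophisticated---it gives an explicit rewriting algorithm and is the engine behind much of Dehornoy--Paris---but for the bare existence statement of this lemma it is overkill. Your argument is more elementary and entirely self-contained, extracting only the Ore condition from the right Gaussian hypothesis; the price is that it gives no canonical or algorithmic form for $y$ and $z$, but the lemma does not ask for one.
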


\begin{proof}
Let $f$ be a right l.c.m. selector on $S$ in $M$. Then, by \cite[Theorem 4.2]{Dehornoy-Paris-1999}, $(S,f)$ satisfies condition $\text{III}_{\text{R}}$, and hence $R_R^f(x)$ exists for any $x\in(S\cup S^{-1})^*$. In other words, for every $x\in(S\cup S^{-1})^*$, there exist $y,z\in S^*$ such that $x =_G yz^{-1}$. See the definitions just after Proposition 3.2 in \cite{Dehornoy-Paris-1999} for definitions of $R_R^f(x)$ and condition $\text{III}_{\text{R}}$.
\end{proof}

\begin{lemma}\label{lem-5-right-equivalence}
Let $(M,S)$ be a pair of type $\mathcal{R}_1$, $G$ the right Gaussian group corresponding to $M$ and $f$ a right l.c.m. selector on $S$ in $M$. Let $s,t\in S$ and $x\in(S\cup S^{-1})^*$ be such that $x^{-1}sx =_G t$. Then $x^{-1}sx\simeq_R^f t$.
\end{lemma}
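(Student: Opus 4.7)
The plan is to reduce to the positive-word case handled by Lemma~\ref{lem-1-right-equivalence}. By Lemma~\ref{lem-4-right-equivalence}, there exist $y,z \in S^*$ with $x =_G yz^{-1}$, and transformation $\text{R}_G$ then yields
\[
x^{-1}sx \;\simeq_R^f\; (yz^{-1})^{-1} s\, (yz^{-1}) \;=\; zy^{-1}syz^{-1}.
\]
It therefore suffices to prove $zy^{-1}syz^{-1} \simeq_R^f t$. Observe that the hypothesis $x^{-1}sx =_G t$ rearranges in $G$ to $y^{-1}sy =_G z^{-1}tz$.

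To convert $y^{-1}sy$ into a positive-word form to which Lemma~\ref{lem-1-right-equivalence} applies, I would invoke Lemma~\ref{lem-3-right-equivalence} on the pair $(s,y)$ to obtain $t_1 \in S$ and $y' \in S^*$ with $syy' =_M yy't_1$. This directly gives $(yy')^{-1}s(yy') =_G t_1$. Moreover, combining $y^{-1}sy =_G z^{-1}tz$ with $y^{-1}sy =_G y't_1y'^{-1}$ (a consequence of $syy' =_G yy't_1$) yields $(zy')^{-1}t(zy') =_G t_1$ as well. Since both $yy'$ and $zy'$ are positive words, Lemma~\ref{lem-1-right-equivalence} applies to each of the two identities, and transitivity produces
\[
y'^{-1}y^{-1}syy' \;\simeq_R^f\; t_1 \;\simeq_R^f\; y'^{-1}z^{-1}tzy'.
\]

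Conjugating this relation by $y'^{-1}$ via Lemma~\ref{lem-f-equivalence} gives
\[
y'y'^{-1}y^{-1}syy'y'^{-1} \;\simeq_R^f\; y'y'^{-1}z^{-1}tzy'y'^{-1},
\]
whose two sides equal $(yy'y'^{-1})^{-1}s(yy'y'^{-1})$ and $(zy'y'^{-1})^{-1}t(zy'y'^{-1})$, respectively. Since $yy'y'^{-1} =_G y$ and $zy'y'^{-1} =_G z$, two applications of transformation $\text{R}_G$ collapse the relation to $y^{-1}sy \simeq_R^f z^{-1}tz$. A further conjugation by $z^{-1}$ via Lemma~\ref{lem-f-equivalence} then yields $zy^{-1}syz^{-1} \simeq_R^f zz^{-1}tzz^{-1}$, and a last $\text{R}_G$-collapse (using $zz^{-1} =_G 1$ together with the identity $(zz^{-1})^{-1} = zz^{-1}$ in $(S\cup S^{-1})^*$) converts the right-hand side to $t$, completing the argument.

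The main technical obstacle is the bookkeeping of the involution $w \mapsto w^{-1}$ on $(S\cup S^{-1})^*$: each intermediate expression produced by Lemma~\ref{lem-f-equivalence} must be verified to be a positive symmetric word $u^{-1}su$ so that transformation $\text{R}_G$ is applicable, and the collapses $yy'y'^{-1} =_G y$, $zy'y'^{-1} =_G z$, $zz^{-1} =_G 1$ must be invoked in the correct order so that every rewriting stays inside $S^{(S \cup S^{-1})^*}$.
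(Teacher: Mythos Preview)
Your proof is correct and follows essentially the same route as the paper's: decompose $x =_G yz^{-1}$ via Lemma~\ref{lem-4-right-equivalence}, extend $y$ to a positive word using Lemma~\ref{lem-3-right-equivalence} so that Lemma~\ref{lem-1-right-equivalence} applies twice (once for $s$ through $yy'$, once for $t$ through $zy'$), and then collapse the auxiliary factors using Lemma~\ref{lem-f-equivalence} together with $\text{R}_G$. The paper merely organises the final collapse slightly differently, passing directly from $x^{-1}sx$ to $zww^{-1}y^{-1}syww^{-1}z^{-1}$ (your $w$ is $y'$) and collapsing in two steps rather than three; the ingredients and logic are the same.
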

\begin{proof}
Let $s,t\in S$ and $x\in(S\cup S^{-1})^*$ be such that $x^{-1}sx =_G t$. By Lemma \ref{lem-4-right-equivalence}, there exist $y,z\in S^*$ such that $x =_G yz^{-1}$. Further, by Lemma \ref{lem-3-right-equivalence}, there exist $u\in S$ and $w\in S^*$ such that $syw =_M ywu$. Thus 
$w^{-1}y^{-1}syw =_G u$, and hence by Lemma \ref{lem-1-right-equivalence}, $$w^{-1}y^{-1}syw\simeq_R^f u.$$ Since $x^{-1}sx =_G t$ and $x =_G yz^{-1}$, we have $y^{-1}sy =_G z^{-1}tz$. Thus $w^{-1}z^{-1}tzw =_Gw^{-1}y^{-1}syw =_G u$. Again, by Lemma \ref{lem-1-right-equivalence}, we have $$w^{-1}z^{-1}tzw\simeq_R^f u,$$ and consequently $$w^{-1}y^{-1}syw\simeq_R^f w^{-1}z^{-1}tzw.$$ Now, using transformation $\text{R}_G$ and Lemma \ref{lem-f-equivalence} alternatively, we get
\begin{align*}
x^{-1}sx&\simeq_R^f zww^{-1}y^{-1}syww^{-1}z^{-1}&(\text{since}\; x =_G yww^{-1}z^{-1})\\
&\simeq_R^f zww^{-1}z^{-1}tzww^{-1}z^{-1}&(\text{since}\; w^{-1}y^{-1}syw\simeq_R^f w^{-1}z^{-1}tzw)\\
&\simeq_R^f t&(\text{since}\; zww^{-1}z^{-1} =_G \epsilon,\; \text{where}\; \epsilon\; \text{is the empty word}).
\end{align*}
This proves the lemma.
\end{proof}
\medskip

\begin{proof}[Proof of Theorem \ref{thm-right-equivalence}]
It follows from remarks \ref{rmk3} and \ref{rmk4} that if two words in $S^{(S\cup S^{-1})^*}$ are right $f$-equivalent, then they represent the same element in $G$. For the converse part, let $x,y\in(S\cup S^{-1})^*$ and $s,t\in S$ be such that $x^{-1}sx =_G y^{-1}ty$. Then $yx^{-1}sxy^{-1} =_G t$. By Lemma \ref{lem-5-right-equivalence}, we have $yx^{-1}sxy^{-1}\simeq_R^f t$. Thus, by Lemma \ref{lem-f-equivalence} together with transformation $\text{R}_G$, we get $y^{-1}ty\simeq_R^f y^{-1}yx^{-1}sxy^{-1}y\simeq_R^f x^{-1}sx$.
\end{proof}

The following main result is inspired by \cite[Theorem 3.1]{kamadamatsumoto} for the quandle of cords of the plane.

\begin{theorem}\label{thm-present-dehn-quandle-right-gaussian-group}
Suppose that $(M,S)$ is a pair of type $\mathcal{R}_1$. Let $\alpha$ and $\beta$ be maps defined by \eqref{eqn1} and \eqref{eqn2}, respectively. Let $G$ be the right Gaussian group corresponding to $M$ and $f$ a right l.c.m. selector on $S$ in $M$. For $(s,t)\in S\times S$ with $s\neq t$, let $f_1(s,t),f_2(s,t),\ldots,f_{n_{st}}(s,t)$ be elements in $S$ such that $f(s,t)=f_1(s,t)f_2(s,t)\cdots f_{n_{st}}(s,t)$. Then, the Dehn quandle $\mathcal{D}(S^G)$ has a presentation with $S$ as the set of generators and defining relations as follows:
\begin{enumerate}[(1)]
\item $\alpha(s,t)*s=t$\quad if $s\leq_L t$,\label{relation1}
\item $\beta(s,t)*f_{n_{st}}(s,t)*f_{n_{st}-1}(s,t)*\cdots *f_1(s,t)=s$, \label{relation2}
\item $u*f_{n_{st}}(s,t)*f_{n_{st}-1}(s,t)*\cdots *f_1(s,t)*s=u*f_{n_{ts}}(t,s)*f_{n_{ts}-1}(t,s)*\cdots *f_1(t,s)*t$\label{relation3}
\end{enumerate}
for $s,t,u\in S$ with $s\neq t$.
\end{theorem}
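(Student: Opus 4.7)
The plan is to define a quandle $Q$ by the stated presentation and to show that the natural map $\phi\colon Q\to\mathcal{D}(S^G)$, sending each generator to its namesake in $S\subseteq\mathcal{D}(S^G)$, is an isomorphism. Surjectivity is essentially immediate: any element of $S^G$ has the form $w^{-1}sw$ for some $s\in S$ and $w\in(S\cup S^{-1})^*$, which can be written as an iterated left-associated product of generators under the quandle operation. For soundness (that $\phi$ is well-defined), I verify each of the three relations in $\mathcal{D}(S^G)$. Relation~(\ref{relation1}) follows from Remark~\ref{rmk3}: since $\alpha(s,t)=_{G}s^{-1}ts$ when $s\leq_{L}t$, one has $\alpha(s,t)*s=s\alpha(s,t)s^{-1}=t$. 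Relation~(\ref{relation2}) follows from Remark~\ref{rmk4}: since $\beta(s,t)=_{G}f(s,t)^{-1}sf(s,t)$, conjugating $\beta(s,t)$ successively by $f_{n_{st}}(s,t),\ldots,f_{1}(s,t)$ reassembles $f(s,t)\beta(s,t)f(s,t)^{-1}=s$. Relation~(\ref{relation3}) reduces to the identity $(sf(s,t))u(sf(s,t))^{-1}=(tf(t,s))u(tf(t,s))^{-1}$ in $G$, which holds because $s\vee t=_{G}sf(s,t)=_{G}tf(t,s)$.

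The substantive work lies in establishing injectivity of $\phi$. By the left-associated normal form, every element of $Q$ is represented by some $s*^{e_{1}}a_{1}*\cdots*^{e_{n}}a_{n}$ with $s,a_{i}\in S$ and $e_{i}\in\{1,-1\}$, and $\phi$ sends this element to the positive symmetric word $w^{-1}sw$ with $w=a_{1}^{-e_{1}}\cdots a_{n}^{-e_{n}}$. Thus, if two such elements of $Q$ become equal in $\mathcal{D}(S^{G})$, the associated positive symmetric words represent the same element of $G$; by Theorem~\ref{thm-right-equivalence} they are right $f$-equivalent. It therefore suffices to show that each of the elementary transformations $\mathrm{R}_{\alpha}$, $\mathrm{R}_{f,\beta}$, and $\mathrm{R}_{G}$ generating the right $f$-equivalence translates to an equality in $Q$. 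For $\mathrm{R}_{\alpha}$, relation~(\ref{relation1}) rewritten as $t*^{-1}s=\alpha(s,t)$ identifies the innermost factor in the quandle word for $(sx)^{-1}t(sx)$ with the innermost factor in the quandle word for $x^{-1}\alpha(s,t)x$, after which the outer conjugation by $x$ is common. For $\mathrm{R}_{f,\beta}$, relation~(\ref{relation2}) rewritten as $s*^{-1}f_{1}(s,t)*\cdots*^{-1}f_{n_{st}}(s,t)=\beta(s,t)$ identifies the innermost parts in an entirely analogous manner.

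The hardest step is the $\mathrm{R}_{G}$ transformation: given $x=_{G}y$ in $(S\cup S^{-1})^{*}$, one must show $s*^{-\delta_{1}}x_{1}*\cdots*^{-\delta_{n}}x_{n}=s*^{-\epsilon_{1}}y_{1}*\cdots*^{-\epsilon_{m}}y_{m}$ in $Q$. Since $G$ admits the complemented presentation with defining relations $sf(s,t)=tf(t,s)$, the word $x$ can be transformed into $y$ by a finite sequence of free reductions/insertions of $uu^{-1}$ or $u^{-1}u$ together with substitutions of $sf(s,t)$ by $tf(t,s)$. Free reductions are absorbed by the quandle axioms $a*u*^{-1}u=a=a*^{-1}u*u$. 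For the substitution step, the main obstacle is that relation~(\ref{relation3}) only asserts that conjugation by $sf(s,t)$ and by $tf(t,s)$ agree on generators $u\in S$, whereas we need this agreement on arbitrary elements of $Q$. This is resolved by invoking the right self-distributive identity $(z_{1}*z_{2})*c=(z_{1}*c)*(z_{2}*c)$: a straightforward induction on the length of a left-associated normal form for $z\in Q$ propagates the equality from generators to all elements, so that conjugation by $sf(s,t)$ equals conjugation by $tf(t,s)$ everywhere on $Q$. This completes the translation of right $f$-equivalence into equalities in $Q$ and establishes injectivity of $\phi$, finishing the proof.
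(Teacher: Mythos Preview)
Your argument is correct and follows essentially the same route as the paper's own proof. Both proofs take Theorem~\ref{thm-right-equivalence} as the key input and then verify that each generating move of the right $f$-equivalence is realisable in the abstractly presented quandle $Q$; and both handle the monoid-relation substitution $sf(s,t)\leftrightarrow tf(t,s)$ by first observing that relation~(\ref{relation3}) gives equality of the two induced inner automorphisms on generators and then propagating to all of $Q$ via right self-distributivity (what the paper isolates as Lemma~\ref{lem-2-present-dehn-quandle-right-gaussian-group}).

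The only organisational difference is that the paper inserts an intermediate step, Lemma~\ref{lem-1-present-dehn-quandle-right-gaussian-group}, which rewrites the $R$-transformations $R_\alpha,R_{f,\beta},R_G$ as the left-conjugation variants $T_\alpha,T_{f,\beta},T_0,T_f$. This has the cosmetic advantage that the resulting quandle relations are written with positive $*$ rather than $*^{-1}$, so relations~(\ref{relation1})--(\ref{relation3}) apply verbatim, whereas you must first rewrite relation~(\ref{relation1}) as $t*^{-1}s=\alpha(s,t)$ and relation~(\ref{relation2}) as $s*^{-1}f_1(s,t)*^{-1}\cdots*^{-1}f_{n_{st}}(s,t)=\beta(s,t)$. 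Your direct treatment of $R_G$, decomposing group equality into free moves plus single substitutions of the complemented relation, is exactly the content the paper extracts by passing from $R_G$ to $T_0$ and $T_f$. Neither approach gains anything the other lacks.
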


Proving Theorem \ref{thm-present-dehn-quandle-right-gaussian-group} requires some preparation in the form of following lemmas.

\begin{lemma}\label{lem-1-present-dehn-quandle-right-gaussian-group}
Let $(M,S)$ be a pair that satisfies conditions \eqref{condition1}, \eqref{condition2} and \eqref{condition3}. Let $\alpha$ and $\beta$ be maps defined by \eqref{eqn1} and \eqref{eqn2}, respectively. Let $f$ be a right l.c.m. selector on $S$ in $M$. Then, the {\it right $f$-equivalence} on $S^{(S\cup S^{-1})^*}$ is same as the equivalence relation generated by the following transformations:\\[8pt]
($\text{T}_\alpha$)\hskip52mm $xs\alpha(s,t)s^{-1}x^{-1}\longleftrightarrow xtx^{-1},$\\[8pt]
where $s,t\in S$ with $s\leq_L t$ and $x\in(S\cup S^{-1})^*$.\\[8pt]
($\text{T}_{f,\beta}$)\hskip43mm $xf(s,t)\beta(s,t)f(s,t)^{-1}x^{-1}\longleftrightarrow xsx^{-1},$\\[8pt]
where $s,t\in S$ and $x\in(S\cup S^{-1})^*$.\\[8pt]
($\text{T}_0$)\hskip60mm $xsx^{-1}\longleftrightarrow ysy^{-1},$\\[8pt]
where $s\in S$ and $x,y\in (S\cup S^{-1})^*$ are words differing by a single use of a relation of the form $tt^{-1}=\epsilon$ or $t^{-1}t=\epsilon$ for $t\in S$. Here $\epsilon$ denotes the empty word.\\[8pt]
($\text{T}_f$)\hskip60mm $xsx^{-1}\longleftrightarrow ysy^{-1},$\\[8pt]
where $s\in S$ and $x,y\in (S\cup S^{-1})^*$ are words differing by a single use of a relation of the form $uf(u,v)=vf(v,u)$ for $u,v\in S$.
\end{lemma}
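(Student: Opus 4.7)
The plan is to establish the two inclusions between the equivalence relations by verifying that each generating transformation of one side can be obtained from a short combination of generating transformations of the other side. The key structural input is that, under the standing hypotheses on $(M,S)$, the right Gaussian group $G$ is the quotient of the free monoid $(S\cup S^{-1})^*$ by the defining relations $ss^{-1}=\epsilon$, $s^{-1}s=\epsilon$ (for $s\in S$) and $uf(u,v)=vf(v,u)$ (for $u,v\in S$). Consequently, $x=_G y$ in $(S\cup S^{-1})^*$ if and only if $x$ and $y$ are linked by a finite chain of single applications of these defining relations, which is precisely what will let me refine $\text{R}_G$ into $\text{T}_0$ and $\text{T}_f$.

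For the inclusion showing that $\text{T}$-equivalence is contained in $\text{R}$-equivalence, I first observe that $\text{T}_0$ and $\text{T}_f$ are immediate instances of $\text{R}_G$, since in each case the two conjugators on either side are equal in $G$. For $\text{T}_\alpha$, I apply $\text{R}_\alpha$ with conjugator $z=s^{-1}x^{-1}$ to obtain
$xss^{-1}tss^{-1}x^{-1}\leftrightarrow_R xs\alpha(s,t)s^{-1}x^{-1}$,
and then invoke $\text{R}_G$ on the conjugators $ss^{-1}x^{-1}$ and $x^{-1}$ (which are equal in $G$) to identify $xss^{-1}tss^{-1}x^{-1}$ with $xtx^{-1}$. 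The argument for $\text{T}_{f,\beta}$ is entirely parallel, using $z=f(s,t)^{-1}x^{-1}$ together with $\text{R}_{f,\beta}$ and then $\text{R}_G$.

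For the reverse inclusion, $\text{R}_\alpha$ is obtained by applying $\text{T}_\alpha$ with conjugator $y=sx$ (so that $yty^{-1}=x^{-1}s^{-1}tsx$), producing
$x^{-1}s^{-1}tsx\leftrightarrow_T x^{-1}s^{-1}s\alpha(s,t)s^{-1}sx$, and then by one $\text{T}_0$ move (cancelling the pair $s^{-1}s$ inside the conjugator of $\alpha(s,t)$) arriving at $x^{-1}\alpha(s,t)x$. The derivation of $\text{R}_{f,\beta}$ proceeds analogously, with $y=f(s,t)x$ and $\ell(f(s,t))$ iterated single-letter $\text{T}_0$ cancellations of the letters of $f(s,t)^{-1}f(s,t)$. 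Finally, to derive $\text{R}_G$, one takes a chain $x=w_0,w_1,\ldots,w_n=y$ where each consecutive pair differs by a single defining-relation application of $G$; each step is either a free reduction, handled by $\text{T}_0$, or an application of $uf(u,v)=vf(v,u)$, handled by $\text{T}_f$. Iterating yields $x^{-1}sx\leftrightarrow_T y^{-1}sy$.

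The only subtlety to track is that $\text{T}_0$ is a \emph{single-letter} free reduction, so multi-letter cancellations such as $f(s,t)^{-1}f(s,t)\leftrightarrow\epsilon$ have to be performed one letter at a time; this is routine bookkeeping. Beyond this, no new ideas are required: Remarks~\ref{rmk3} and \ref{rmk4} already encode the identities $s^{-1}ts=_G\alpha(s,t)$ and $f(s,t)^{-1}sf(s,t)=_G\beta(s,t)$ that make each of the $\text{R}$-moves and $\text{T}$-moves compatible in the group of fractions.
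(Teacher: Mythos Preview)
Your proposal is correct and follows essentially the same approach as the paper: both directions are handled by showing each generating transformation of one equivalence is realized by a short combination of the other's, with the key observation that $\text{R}_G$ decomposes into a chain of $\text{T}_0$ and $\text{T}_f$ moves via the defining presentation of $G$. The paper organizes the $\alpha$/$\beta$ steps slightly more abstractly (first establishing a $\text{T}_G$ move and a conjugation-closure analogue of Lemma~\ref{lem-f-equivalence}, then reducing to the atomic equivalences $s^{-1}ts\simeq_T^f\alpha(s,t)$ and $f(s,t)^{-1}sf(s,t)\simeq_T^f\beta(s,t)$), whereas you write down explicit conjugators, but the content is the same. One small slip: in your derivation of $\text{R}_\alpha$ from $\text{T}_\alpha$, the conjugator should be $y=x^{-1}s^{-1}$ (so that $yty^{-1}=x^{-1}s^{-1}tsx$), not $y=sx$; otherwise the argument is fine.
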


\begin{proof}
For $x,y\in S^{(S\cup S^{-1})^*}$, we write $x\simeq_T^f y$ if $x$ and $y$ are equivalent under the equivalence relation generated by $\text{T}_\alpha$, $\text{T}_{f,\beta}$, $\text{T}_0$ and $\text{T}_f$. By \cite[Theorem 4.1]{Dehornoy-Paris-1999}, $M$ has a complemented presentation $$\left\langle S\mid sf(s,t)=tf(t,s)\;\text{for}\;s,t\in S\right\rangle.$$ Let $G$ be the right Gaussian group corresponding to $M$. Then, $G$ is the quotient of the free monoid $(S\cup S^{-1})^*$ by relations $ss^{-1}=1$, $s^{-1}s=1$ and $sf(s,t)=tf(t,s)$ for $s,t\in S$. To prove the lemma, we need to show that each single transformation $\text{R}_\alpha$, $\text{R}_{f,\beta}$ and $\text{R}_G$ can be obtained from a combination of transformations $\text{T}_\alpha$, $\text{T}_{f,\beta}$, $\text{T}_0$ and $\text{T}_f$, and vice-versa.
\par
Let $s\in S$ and $x,y\in(S\cup S^{-1})^*$. If $x=_G y$, then there is a sequence $x=x_1,x_2,\ldots, x_n=y$ of words such that $x_i$ and $x_{i+1}$ differ by a single use of a relation of the form $uu^{-1}=\epsilon$ or $u^{-1}u=\epsilon$ or $uf(u,v)=vf(v,u)$ for $u,v\in S$. In this case, by virtue of transformations $\text{T}_0$ and $\text{T}_f$, we have $x_isx_i^{-1}\simeq_T^f x_{i+1}sx_{i+1}^{-1}$ for all $i$. Consequently,  $xsx^{-1}\simeq_T^f ysy^{-1}$. This suggests that, for the equivalence relation $\simeq_T^f$, transformations $\text{T}_0$ and $\text{T}_f$ can be replaced by the following transformation:\\[8pt]
($\text{T}_G$)\hskip60mm $xsx^{-1}\longleftrightarrow ysy^{-1},$\\[8pt]
where $s\in S$ and $x,y\in(S\cup S^{-1})^*$ with $x=_G y$. Note that if $w\in(S\cup S^{-1})^*$, then $w^{-1}\in(S\cup S^{-1})^*$ and $(w^{-1})^{-1}=w$. Further, $x^{-1}=_G y^{-1}$ if and only if $x =_G y$. Thus, transformations $\text{T}_G$ and $\text{R}_G$ are the same. It is now sufficient to prove the following statements:
\begin{enumerate}[(1)]
\item If $x,y\in S^{(S\cup S^{-1})^*}$ are words differing by a single use of $\text{R}_\alpha$ or $\text{R}_{f,\beta}$, then $x\simeq_T^f y$.\label{statement1}
\item If $x,y\in S^{(S\cup S^{-1})^*}$ are words differing by a single use of $\text{T}_\alpha$ or $\text{T}_{f,\beta}$, then $x\simeq_R^f y$.\label{statement2}
\end{enumerate}

With analogy to Lemma \ref{lem-f-equivalence}, we can say that if $y, z \in S^{(S\cup S^{-1})^*}$ and $x\in(S\cup S^{-1})^*$ such that $y\simeq_T^f z$, then $xyx^{-1}\simeq_T^f xzx^{-1}$. Since $x^{-1}\in(S\cup S^{-1})^*$ and $(x^{-1})^{-1}=x$, we also have $x^{-1}yx\simeq_T^f x^{-1}zx$ whenever $y\simeq_T^f z$. This together with the following statements will prove statement \eqref{statement1}.
\begin{align}
&s^{-1}ts\simeq_T^f \alpha(s,t)&\text{for}\; s,t\in S\; \text{with}\; s\leq_L t.\label{eqn7}\\
&f(s,t)^{-1}sf(s,t)\simeq_T^f \beta(s,t)&\text{for}\; s,t\in S.\label{eqn8}
\end{align}
Thus, to prove statement \eqref{statement1}, it is enough to prove \eqref{eqn7} and \eqref{eqn8}. Using transformations $\text{T}_\alpha$ and $\text{T}_{f,\beta}$, we have the following:
\begin{align}
&s\alpha(s,t)s^{-1}\simeq_T^f t&\text{for}\; s,t\in S\; \text{with}\; s\leq_L t.\label{eqn16}\\
&f(s,t)\beta(s,t)f(s,t)^{-1}\simeq_T^f s&\text{for}\; s,t\in S.\label{eqn17}
\end{align}
Let $x\in(S\cup S^{-1})^*$ and $y,z\in S^{(S\cup S^{-1})^*}$ be such that $xyx^{-1}\simeq_T^f z$. Then
\begin{align*}
x^{-1}zx &\simeq_T^f x^{-1}xyx^{-1}x&(\text{since}\; z\simeq_T^f xyx^{-1})\\
&\simeq_T^f y&(\text{by transformation}\; \text{T}_G).
\end{align*}
Thus, $x^{-1}zx \simeq_T^f y$ whenever $xyx^{-1}\simeq_T^f z$. This together with \eqref{eqn16} and \eqref{eqn17} implies \eqref{eqn7} and \eqref{eqn8}, respectively. This proves the statement \eqref{statement1}. Statement \eqref{statement2} can be proved similarly.
\end{proof}

The following lemma is a direct consequence of quandle axioms. See, for example, \cite[Lemma 4.4.8]{Winker1984}.

\begin{lemma}\label{lem-2-present-dehn-quandle-right-gaussian-group}
Let $Q$ be quandle. Then, any left associated product $$u_0*^{\delta_1}u_1*^{\delta_2}\cdots*^{\delta_m}u_m *^{\eta_1}v_1*^{\eta_2}v_2*^{\eta_3}\cdots*^{\eta_n}v_n$$ can be written as 
$$(u_0*^{\eta_1}v_1*^{\eta_2}v_2*^{\eta_3}\cdots*^{\eta_n}v_n)*^{\delta_1}(u_1*^{\eta_1}v_1*^{\eta_2}v_2*^{\eta_3}\cdots*^{\eta_n}v_n)*^{\delta_2}\cdots*^{\delta_m}(u_m*^{\eta_1}v_1*^{\eta_2}v_2*^{\eta_3}\cdots*^{\eta_n}v_n)$$
where $u_i,v_j\in Q$ and $\delta_i, \eta_j \in\{-1,1\}$.
\end{lemma}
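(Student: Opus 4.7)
The plan is to exploit the fact that in any quandle $Q$, for each $v \in Q$ the right translation $S_v \colon Q \to Q$ given by $S_v(x) = x * v$ is an automorphism of $Q$, with inverse $S_v^{-1}(x) = x *^{-1} v$. So both $S_v$ and $S_v^{-1}$ are quandle automorphisms. A routine consequence is that any quandle automorphism $\phi$ is compatible with $*^{-1}$ as well: the identity $\phi(a * b) = \phi(a) * \phi(b)$ is the definition of a homomorphism, and writing $a = (a *^{-1} b) * b$, applying $\phi$, and solving yields $\phi(a *^{-1} b) = \phi(a) *^{-1} \phi(b)$. Thus for any $\delta \in \{-1, 1\}$ and $a, b \in Q$,
\[
\phi(a *^{\delta} b) = \phi(a) *^{\delta} \phi(b).
\]

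Next I would prove, by induction on $m$, that for any quandle automorphism $\phi$ and any left-associated product $w = u_0 *^{\delta_1} u_1 *^{\delta_2} \cdots *^{\delta_m} u_m$,
\[
\phi(w) = \phi(u_0) *^{\delta_1} \phi(u_1) *^{\delta_2} \cdots *^{\delta_m} \phi(u_m).
\]
The base case $m = 0$ is trivial. For the inductive step, write $w = w' *^{\delta_m} u_m$ with $w' = u_0 *^{\delta_1} \cdots *^{\delta_{m-1}} u_{m-1}$; then $\phi(w) = \phi(w') *^{\delta_m} \phi(u_m)$ by the displayed identity above, and the inductive hypothesis rewrites $\phi(w')$ in the required form.

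Finally, I would observe that the left-associated expression $x *^{\eta_1} v_1 *^{\eta_2} v_2 *^{\eta_3} \cdots *^{\eta_n} v_n$ is, by definition of left-association, the value at $x$ of the composite
\[
\Phi \;=\; S_{v_n}^{\eta_n} \circ S_{v_{n-1}}^{\eta_{n-1}} \circ \cdots \circ S_{v_1}^{\eta_1},
\]
which is a composition of quandle automorphisms and hence itself an automorphism of $Q$. Applying the inductive result of the previous paragraph with $\phi = \Phi$ to the product $u_0 *^{\delta_1} u_1 *^{\delta_2} \cdots *^{\delta_m} u_m$ produces exactly the claimed equality, since $\Phi(u_i) = u_i *^{\eta_1} v_1 *^{\eta_2} v_2 *^{\eta_3} \cdots *^{\eta_n} v_n$ for each $i$.

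The only substantive point is the verification that $S_v$ and $S_v^{-1}$ are compatible with $*^{-1}$ on products, which reduces to a one-line manipulation using the quandle axioms. Once that is in place, the remainder is a single induction on $m$, so I do not expect any real obstacle.
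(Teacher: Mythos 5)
Your argument is correct: the observation that each right translation $S_v$ and its inverse are quandle automorphisms, that automorphisms are automatically compatible with $*^{-1}$, and the induction on $m$ together give exactly the stated identity. The paper offers no proof of this lemma (it only cites Winker's thesis and calls it a direct consequence of the quandle axioms), and your write-up is the standard argument one would expect behind that citation, so there is nothing to object to.
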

\medskip

\begin{proof}[Proof of Theorem \ref{thm-present-dehn-quandle-right-gaussian-group}]
Since $S$ generates $G$, it follows from \cite[Proposition 3.2]{Dhanwani-Raundal-Singh-2021} that $S$ also generates the Dehn quandle $\mathcal{D}(S^G)$. Note that relations \eqref{relation1}, \eqref{relation2} and \eqref{relation3} (in the statement of the theorem) can be written respectively as follows:
\begin{eqnarray}
s\alpha(s,t)s^{-1}&=& t\quad \text{if}\; s\leq_L t,\label{eqn21}\\
f(s,t)\beta(s,t)f(s,t)^{-1}&=& s, \label{eqn22}\\
sf(s,t)uf(s,t)^{-1}s^{-1}&=& t f(t,s)uf(t,s)^{-1}t^{-1}\label{eqn23}
\end{eqnarray}
for $s,t,u\in S$ with $s\neq t$. By remarks \ref{rmk3} and \ref{rmk4}, relations \eqref{eqn21} and \eqref{eqn22} are in fact relations in $\mathcal{D}(S^G)$, and hence so are relations \eqref{relation1} and \eqref{relation2}. Also, since $sf(s,t) =_G tf(t,s)$ for any $(s,t)\in S\times S$, \eqref{eqn23} is a relation in $\mathcal{D}(S^G)$, and hence so is a relation as in (\ref{relation3}). By Theorem \ref{thm-right-equivalence} together with Lemma \ref{lem-1-present-dehn-quandle-right-gaussian-group}, it is sufficient to prove that relations in $\mathcal{D}(S^G)$ corresponding to transformations $\text{T}_\alpha$, $\text{T}_{f,\beta}$, $\text{T}_0$ and $\text{T}_f$ can be obtained by relations (\ref{relation1}), (\ref{relation2}) and (\ref{relation3}). Note that a relation corresponding to $\text{T}_0$ is a trivial relation. Also, a relation corresponding to $\text{T}_\alpha$ or $\text{T}_{f,\beta}$ is trivial if $s=t$, and a relation corresponding to $\text{T}_f$ is trivial if $u=v$. The remaining relations can be written as follows:
{\small
\begin{align}
&\alpha(s,t)*s*^{\delta_1}u_1*^{\delta_2}u_2*^{\delta_3}\cdots*^{\delta_m}u_m=t*^{\delta_1}u_1*^{\delta_2}u_2*^{\delta_3}\cdots*^{\delta_m}u_m\quad \text{if}\; s\leq_L t,\label{eqn18}\\
&\beta(s,t)*f_{n_{st}}(s,t)*f_{n_{st}-1}(s,t)*\cdots *f_1(s,t)*^{\delta_1}u_1*^{\delta_2}u_2*^{\delta_3}\cdots*^{\delta_m}u_m\label{eqn19}\\
&=s*^{\delta_1}u_1*^{\delta_2}u_2*^{\delta_3}\cdots*^{\delta_m}u_m,\quad \nonumber\\
&u_0*^{\delta_1}u_1*^{\delta_2}\cdots*^{\delta_m}u_m*f_{n_{st}}(s,t)*f_{n_{st}-1}(s,t)*\cdots *f_1(s,t)*s*^{\epsilon_1}v_1*^{\epsilon_2}v_2*^{\epsilon_3}\cdots*^{\epsilon_n}v_n\label{eqn20}\\
&=u_0*^{\delta_1}u_1*^{\delta_2}\cdots*^{\delta_m}u_m*f_{n_{ts}}(t,s)*f_{n_{ts}-1}(t,s)*\cdots *f_1(t,s)*t*^{\epsilon_1}v_1*^{\epsilon_2}v_2*^{\epsilon_3}\cdots*^{\epsilon_n}v_n\nonumber
\end{align}}
for $s,t,u_i,v_i\in S$ with $s\neq t$ and $\delta_i,\epsilon_i\in\{-1,1\}$. Here, note that \eqref{eqn18}, \eqref{eqn19} and \eqref{eqn20} are non-trivial relations corresponding to  transformations $\text{T}_\alpha$, $\text{T}_{f,\beta}$ and $\text{T}_f$, respectively. Relations \eqref{eqn18} and \eqref{eqn19} can be obtained by relations (\ref{relation1}) and (\ref{relation2}), respectively. By right cancellation, \eqref{eqn20} takes the form
\begin{eqnarray}
&&u_0*^{\delta_1}u_1*^{\delta_2}\cdots*^{\delta_m}u_m*f_{n_{st}}(s,t)*f_{n_{st}-1}(s,t)*\cdots *f_1(s,t)*s\label{eqn24}\\
&=&u_0*^{\delta_1}u_1*^{\delta_2}\cdots*^{\delta_m}u_m*f_{n_{ts}}(t,s)*f_{n_{ts}-1}(t,s)*\cdots *f_1(t,s)*t.\nonumber
\end{eqnarray}
It follows from Lemma \ref{lem-2-present-dehn-quandle-right-gaussian-group} that \eqref{eqn24} can be written as
\begin{equation}
v_0*^{\delta_1}v_1*^{\delta_2}\cdots*^{\delta_m}v_m=w_0*^{\delta_1}w_1*^{\delta_2}\cdots*^{\delta_m}w_m,\label{eqn25}
\end{equation}
where $$v_i=u_i*f_{n_{st}}(s,t)*f_{n_{st}-1}(s,t)*\cdots *f_1(s,t)*s$$
and
$$w_i=u_i*f_{n_{ts}}(t,s)*f_{n_{ts}-1}(t,s)*\cdots *f_1(t,s)*t.$$ It follows from relations as in \eqref{relation3} that $v_i=w_i$ for each $i$. Thus \eqref{eqn25} can be obtained by relations as in (\ref{relation3}), and hence so are \eqref{eqn24} and \eqref{eqn20}. This completes the proof of the theorem.
\end{proof}

By Proposition \ref{prop-pairs-of-types-r}, theorems \ref{thm-right-equivalence} and \ref{thm-present-dehn-quandle-right-gaussian-group} are also true for pairs of types $\mathcal{R}_2$ through $\mathcal{R}_9$. For $i\in\{4,5,6,7\}$, let $(M,A)$ be a pair of type $\mathcal{R}_i$. Then, for $a,b\in A$, $a\leq_L b$ if and only if $a=b$ (see Remark \ref{rmk2}). Thus, we have the following corollary of Theorem \ref{thm-present-dehn-quandle-right-gaussian-group}.

\begin{corollary}\label{cor-present-right-gaussian-quandle}
For $i\in\{4,5,6,7\}$, let $M$ be a right Gaussian monoid of type $\mathcal{R}_i$. Let $A$ be the set of atoms in $M$ and $\beta:A\times A\to A$ the map defined by $(a,b)\mapsto (a\backslash b)\backslash(a\vee b)$. Let $G$ be the right Gaussian group corresponding to $M$ and $f$ a right l.c.m. selector on $A$ in $M$. For $(a,b)\in A\times A$ with $a\neq b$, let $f_1(a,b),f_2(a,b),\ldots,f_{n_{ab}}(a,b)$ be elements in $A$ such that $f(a,b)=f_1(a,b)f_2(a,b)\cdots f_{n_{ab}}(a,b)$. Then, the right Gaussian quandle $\mathcal{D}(A^G)$ has a presentation with $A$ as its set of generators and defining relations as follows:
\begin{enumerate}[(1)]
\item $\beta(a,b)*f_{n_{ab}}(a,b)*f_{n_{ab}-1}(a,b)*\cdots *f_1(a,b)=a$,
\item $c*f_{n_{ab}}(a,b)*f_{n_{ab}-1}(a,b)*\cdots *f_1(a,b)*a=c*f_{n_{ba}}(b,a)*f_{n_{ba}-1}(b,a)*\cdots *f_1(b,a)*b$
\end{enumerate}
for $a,b,c\in A$ with $a\neq b$.
\end{corollary}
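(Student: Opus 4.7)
The plan is to apply Theorem \ref{thm-present-dehn-quandle-right-gaussian-group} directly to the pair $(M,A)$. This is legitimate because Proposition \ref{prop-pairs-of-types-r} guarantees that any pair of type $\mathcal{R}_i$ with $i\in\{4,5,6,7\}$ is of type $\mathcal{R}_1$, so the hypotheses of that theorem are met. Under the renaming $(s,t,u) \leftrightarrow (a,b,c)$, the theorem's relations (\ref{relation2}) and (\ref{relation3}) translate verbatim into the two families of relations asserted in the corollary. So the entire content of the proof reduces to showing that the first family of relations in the theorem, namely $\alpha(s,t)*s = t$ for $s\leq_L t$, becomes redundant once we specialize the generating set to be the set of atoms.

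To verify this redundancy, I would invoke Remark \ref{rmk2}: in an atomic monoid, one atom left-divides another only when the two atoms coincide. Thus, for $a,b\in A$, the hypothesis $a\leq_L b$ forces $a=b$, and the pairs contributing a relation of type (\ref{relation1}) are exactly the diagonal pairs $(a,a)$. For such a pair, $a\backslash a = 1$, so
\[
\alpha(a,a) \;=\; (a\backslash a)\,a \;=\; a,
\]
and the relation $\alpha(a,a)*a = a$ collapses to the quandle idempotence axiom $a*a = a$. Since this holds in every quandle, the entire family (\ref{relation1}) consists of trivial relations and may be discarded. What remains is precisely the presentation claimed in the corollary.

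There is no real obstacle here; the proof is a specialization argument. The only subtlety worth flagging explicitly is the passage from type $\mathcal{R}_i$ ($i\in\{4,5,6,7\}$) to type $\mathcal{R}_1$ via Proposition \ref{prop-pairs-of-types-r}, and the observation about left divisibility of atoms. Both are already recorded earlier in the section, so the argument can be presented in a few lines.
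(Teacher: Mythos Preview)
Your proposal is correct and follows exactly the argument the paper gives: the paper notes just before the corollary that Proposition~\ref{prop-pairs-of-types-r} makes Theorem~\ref{thm-present-dehn-quandle-right-gaussian-group} applicable, and that by Remark~\ref{rmk2} the condition $a\leq_L b$ for atoms forces $a=b$, rendering the relations of type~(\ref{relation1}) vacuous. Your write-up is in fact more explicit than the paper's, spelling out that $\alpha(a,a)=a$ and that the surviving relation is the idempotence axiom.
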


For $i\in\{8,9\}$, let $(N,B)$ be a pair of type $\mathcal{R}_i$. Then, for $a,b\in B$, $a\leq_L b$ if and only if $a=b$. Thus, we have the following corollary.

\begin{corollary}\label{cor-present-garside-quandle-r}
For $i\in\{8,9\}$, let $N$ be a Garside monoid of type $\mathcal{R}_i$. Let $B$ be the set of atoms in $N$ and $\beta:B\times B\to B$ the map defined by $(a,b)\mapsto (a\backslash b)\backslash(a\vee b)$. Let $H$ be the Garside group corresponding to $N$ and $f$ a right l.c.m. selector on $B$ in $N$. For $(a,b)\in B\times B$ with $a\neq b$, let $f_1(a,b),f_2(a,b),\ldots,f_{n_{ab}}(a,b)$ be elements in $B$ such that $f(a,b)=f_1(a,b)f_2(a,b)\cdots f_{n_{ab}}(a,b)$. Then, the Garside quandle $\mathcal{D}(B^H)$ has a presentation with $B$ as its set of generators and defining relations as:
\begin{enumerate}[(1)]
\item $\beta(a,b)*f_{n_{ab}}(a,b)*f_{n_{ab}-1}(a,b)*\cdots *f_1(a,b)=a$,
\item $c*f_{n_{ab}}(a,b)*f_{n_{ab}-1}(a,b)*\cdots *f_1(a,b)*a=c*f_{n_{ba}}(b,a)*f_{n_{ba}-1}(b,a)*\cdots *f_1(b,a)*b$
\end{enumerate}
for $a,b,c\in A$ with $a\neq b$.
\end{corollary}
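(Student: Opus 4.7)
The plan is to deduce the corollary directly from Theorem \ref{thm-present-dehn-quandle-right-gaussian-group} by specializing to the atomic generating set and using the fact that among atoms the left divisibility relation becomes the equality relation. First, by Proposition \ref{prop-pairs-of-types-r}, pairs of types $\mathcal{R}_8$ and $\mathcal{R}_9$ are of type $\mathcal{R}_1$. Since every Garside monoid is in particular a right Gaussian monoid and since $B$ is a finite generating set for $N$ (indeed, it is the set of atoms, which is finite by definition and generates $N$ because atoms are divisors of $\Delta$), the pair $(N,B)$ qualifies as a pair $(M,S)$ of type $\mathcal{R}_1$ in the sense required by Theorem \ref{thm-present-dehn-quandle-right-gaussian-group}. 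Thus that theorem produces a presentation of $\mathcal{D}(B^H)$ with $B$ as a set of generators and defining relations \eqref{relation1}, \eqref{relation2}, \eqref{relation3}.

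Next I would observe that in the atomic setting relations of type \eqref{relation1} collapse to trivial quandle relations, so they can be dropped without changing the presented quandle. The key input here is Remark \ref{rmk2}: in an atomic monoid, an atom $a$ is a left divisor of an atom $b$ precisely when $a = b$. Consequently, for $a,b \in B$ with $a \leq_L b$ we must have $a = b$, and then the map $\alpha$ from \eqref{eqn1} gives $\alpha(a,a) = (a\backslash a)\,a = 1 \cdot a = a$. The corresponding relation from \eqref{relation1} becomes $a * a = a$, which already holds in any quandle by idempotency of $*$. Therefore the family \eqref{relation1} contributes no new relations, and only the families \eqref{relation2} and \eqref{relation3} survive, yielding exactly the two families of defining relations stated in the corollary.

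Finally, I would verify that the relations listed in \eqref{relation2} and \eqref{relation3} of Theorem \ref{thm-present-dehn-quandle-right-gaussian-group}, once rewritten with $S$ replaced by $B$ and $s,t,u$ replaced by $a,b,c$, match verbatim the relations (1) and (2) of the corollary. No further identification or rewriting is required, since the maps $\beta$ and $f$ used in the corollary are defined identically to those in the theorem but specialized to the set of atoms. I do not expect a serious obstacle here; the only point that needs a brief justification is the collapse of \eqref{relation1}, and that is immediate from the atomicity of $N$ together with Remark \ref{rmk2}. This completes the proof outline.
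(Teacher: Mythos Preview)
Your proposal is correct and follows essentially the same approach as the paper: invoke Proposition \ref{prop-pairs-of-types-r} so that Theorem \ref{thm-present-dehn-quandle-right-gaussian-group} applies, and then use Remark \ref{rmk2} to see that for atoms $a\leq_L b$ forces $a=b$, whence the $\alpha$-relations \eqref{relation1} reduce to idempotency and may be dropped. The paper states this in one line before the corollary, but the reasoning is identical to yours.
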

\medskip

\subsection{Presentations of left Gaussian quandles and Garside quandles of certain types}

This subsection is a left analogue of the preceding section and we present it without details. We set the following notations:
\begin{itemize}
\item $M$ - a left Gaussian monoid;
\item $S$ - a finite generating set for $M$;
\item $A$ - the set of atoms in $M$;
\item $N$ - a Garside monoid;
\item $T$ - a finite generating set for $N$;
\item $B$ - the set of atoms in $N$;
\item $\Delta$ - a Garside element in $N$;
\item $(M,S)$, $(M,A)$, $(N,T)$, $(N,B)$, $(N,\Delta)$ and $(N,T,\Delta)$ - pairs and a triple of objects with the meaning above.
\end{itemize}

For a pair $(M,S)$, we assume throughout this subsection that elements in $S$ are pairwise distinct in $M$. The same is assumed in case of a pair $(N,T)$. For elements $x$ and $y$ in a left Gaussian monoid, denote the right g.c.d. and the left l.c.m. of $x$ and $y$ respectively by $x\widetilde{\wedge}y$ and $x\widetilde{\vee}y$. The {\it left residue} of $x$ in $y$ (denoted by $y/x$) is the unique element $z$ satisfying $x\widetilde{\vee}y=zx$. Thus, we have
\begin{equation*}
x\widetilde{\vee}y=(y/x)x=(x/y)y\,.
\end{equation*}

Consider the following conditions on a pair $(M,S)$:
\begin{enumerate}[(i)]\setcounter{enumi}{5}
\item $t(s/t)\in S$ whenever $(s,t)\in S\times S$ and $t\leq_R s$.\label{condition6}
\item $s/t\leq_R s\widetilde{\vee} t$ for all $(s,t)\in S\times S$.\label{condition7}
\item $(s\widetilde{\vee} t)/(s/t)\in S$ for all $(s,t)\in S\times S$.\label{condition8}
\item $M$ has a finite homogeneous presentation $\left\langle S\mid R\right\rangle$.\label{condition9}
\item For each $s\in S$ and each $x\in M$, there exists $y=y(s,x)$ in $M$ such that $yx\leq_R yxs$.\label{condition10}
\end{enumerate}

Define the following terms:

\begin{itemize}
\item A pair $(M,S)$ is of
\begin{itemize}
\item {\it type $\mathcal{L}_1$} if it satisfies conditions (\ref{condition6}), (\ref{condition7}), (\ref{condition8}) and (\ref{condition10}).
\item {\it type $\mathcal{L}_2$} if it satisfies conditions (\ref{condition6}), (\ref{condition7}) and (\ref{condition8}), and if there exist a triple $(N,T,\Delta)$ with $T\Delta=\Delta T$ and an epimorphism $\pi:(N,T)\twoheadrightarrow (M,S)$.
\end{itemize}

\item A pair $(N,T)$ is of {\it type $\mathcal{L}_3$} if it satisfies conditions (\ref{condition6}), (\ref{condition7}) and (\ref{condition8}), and there exists a Garside element $\Delta\in N$ such that $T\Delta=\Delta T$.

\item A pair $(M,A)$ is of
\begin{itemize}
\item {\it type $\mathcal{L}_4$} if it satisfies conditions (\ref{condition7}), (\ref{condition8}) and (\ref{condition10}).
\item {\it type $\mathcal{L}_5$} if it satisfies conditions (\ref{condition7}), (\ref{condition9}) and (\ref{condition10}).
\item {\it type $\mathcal{L}_6$} if it satisfies conditions (\ref{condition7}) and (\ref{condition8}), and if there exist a triple $(N,T,\Delta)$ with $T\Delta=\Delta T$ and an epimorphism $\pi:(N,T)\twoheadrightarrow (M,A)$.
\item {\it type $\mathcal{L}_7$} if it satisfies conditions (\ref{condition7}) and (\ref{condition9}), and if there exist a triple $(N,T,\Delta)$ with $T\Delta=\Delta T$ and an epimorphism $\pi:(N,T)\twoheadrightarrow (M,A)$.
\end{itemize}

\item A pair $(N,B)$ is of
\begin{itemize}
\item {\it type $\mathcal{L}_8$} if it satisfies conditions (\ref{condition7}) and (\ref{condition8}).
\item {\it type $\mathcal{L}_9$} if it satisfies conditions (\ref{condition7}) and (\ref{condition9}).
\end{itemize}
\end{itemize}

For $i=4,5,6,7$, we say a left Gaussian monoid $M$ is of {\it type $\mathcal{L}_i$} if the pair $(M,A)$ is of type $\mathcal{L}_i$. In this case, we also say that the left Gaussian group $G$ corresponding to $M$ and the left Gaussian quandle $\mathcal{D}(A^G)$ are of {\it type $\mathcal{L}_i$}. Similarly, for $i=8,9$, we say a Garside monoid $N$ is of type {\it type $\mathcal{L}_i$} if the pair $(N,B)$ is of type $\mathcal{L}_i$. In this case, we also say that the Garside group $H$ corresponding to $N$ and the Garside quandle $\mathcal{D}(B^H)$ are of {\it type $\mathcal{L}_i$}.

The next proposition is analogous to Proposition \ref{prop-pairs-of-types-r}.

\begin{proposition}\label{prop-pairs-of-types-l}
Pairs of types $\mathcal{L}_2$ through $\mathcal{L}_9$ are of type $\mathcal{L}_1$.
\end{proposition}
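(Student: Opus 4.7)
The plan is to mirror the proof of Proposition \ref{prop-pairs-of-types-r} step for step, replacing left divisibility by right divisibility and right l.c.m.'s by left l.c.m.'s throughout. Just as before, I would first dispose of the easy implications and then concentrate on the single nontrivial step.

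First I would observe the left analogues of Remark \ref{rmk2} and Lemma \ref{lem-conditions-ii-and-iv}: for atoms $a,b \in A$, the relation $b \leq_R a$ forces $a=b$, so condition \eqref{condition6} is automatic for a pair $(M,A)$; and if $(M,S)$ satisfies the homogeneous presentation condition \eqref{condition9} together with \eqref{condition7}, then $S$ must be the atom set and the length argument of Lemma \ref{lem-conditions-ii-and-iv} (applied to the identity $(s/t)t = ((s\widetilde\vee t)/(s/t))(s/t)$) yields \eqref{condition6} and \eqref{condition8}. Using these, together with the remark that a Garside monoid is in particular a left Gaussian monoid and that $\id_N\colon (N,T)\twoheadrightarrow (N,T)$ is an epimorphism, one concludes in the same way as in the right case that pairs of types $\mathcal{L}_4,\mathcal{L}_5$ are of type $\mathcal{L}_1$, pairs of types $\mathcal{L}_6,\mathcal{L}_7$ are of type $\mathcal{L}_2$, pairs of types $\mathcal{L}_8,\mathcal{L}_9$ are of type $\mathcal{L}_3$, and type $\mathcal{L}_3$ implies type $\mathcal{L}_2$.

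It then suffices to show that every pair of type $\mathcal{L}_2$ satisfies condition \eqref{condition10}. Let $(M,S)$ be of type $\mathcal{L}_2$, with triple $(N,T,\Delta)$ and epimorphism $\pi\colon (N,T)\twoheadrightarrow (M,S)$. Fix $s\in S$ and $x\in M$, and pick preimages $s'\in T$ and $x'\in N$. Since in a Garside monoid the left and right divisors of $\Delta$ coincide and together generate $N$, we may write $x' = d_1d_2\cdots d_n$ where each $d_i$ is a divisor of $\Delta$. The automorphism $\phi$ of Remark \ref{rmk1} satisfies $d\Delta = \Delta\phi(d)$, so its inverse $\tilde\phi := \phi^{-1}$ is also an automorphism permuting the divisors of $\Delta$ and satisfying
\begin{equation*}
\Delta d = \tilde\phi(d)\,\Delta \quad\text{for all } d \in N.
\end{equation*}
For each $i$, the element $\tilde\phi^{i-1}(d_i)$ is still a divisor of $\Delta$, so I may set $c_i := \Delta/\tilde\phi^{i-1}(d_i)$, which is the unique element satisfying $c_i\,\tilde\phi^{i-1}(d_i) = \Delta$. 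Defining $y' := c_nc_{n-1}\cdots c_1$, iterated application of the identity $\Delta^{i-1}d_i = \tilde\phi^{i-1}(d_i)\Delta^{i-1}$ together with $c_i\,\tilde\phi^{i-1}(d_i) = \Delta$ gives
\begin{equation*}
y'x' \;=\; c_n\cdots c_2 (c_1 d_1) d_2\cdots d_n \;=\; c_n\cdots c_2\,\Delta\,d_2\cdots d_n \;=\; \cdots \;=\; \Delta^n,
\end{equation*}
by peeling off one factor at each step and absorbing $c_i\,\tilde\phi^{i-1}(d_i)=\Delta$.

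Finally, since $T\Delta = \Delta T$, iteration yields $\Delta^n s' = t'\Delta^n$ for some $t'\in T$, and therefore
\begin{equation*}
y'x's' \;=\; \Delta^n s' \;=\; t'\Delta^n \;=\; t'y'x'.
\end{equation*}
Applying $\pi$ and setting $y := \pi(y')$, $t := \pi(t')$, we obtain $yxs = tyx$ in $M$, so $yx \leq_R yxs$, which is condition \eqref{condition10}. Thus $(M,S)$ is of type $\mathcal{L}_1$. The main obstacle I anticipate is simply keeping the conjugation map on the correct side: one must be careful that $\tilde\phi$ is indeed $\phi^{-1}$ (rather than $\phi$) and that the indices $c_i$ are formed with respect to $\tilde\phi^{i-1}(d_i)$ and not $\phi^{n-i}(d_i)$, since the roles of the leftmost and rightmost factors swap in the dualization.
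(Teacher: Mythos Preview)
Your proposal is correct and follows exactly the approach the paper intends: the paper presents Proposition \ref{prop-pairs-of-types-l} ``without details'' as the left analogue of Proposition \ref{prop-pairs-of-types-r}, and your argument is precisely that dualization, with the key computation $y'x'=\Delta^n$ (in place of $x'y'=\Delta^n$) carried out via $\tilde\phi=\phi^{-1}$ and $c_i=\Delta/\tilde\phi^{i-1}(d_i)$ in place of $e_i=\phi^{n-i}(d_i)\backslash\Delta$. The bookkeeping with $\tilde\phi$ and the index shift is handled correctly.
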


Let $(M,S)$ be a pair that satisfies condition \eqref{condition6}. Then, we have a map $$\alpha:\{(s,t)\in S\times S\mid t\leq_R s\}\to S$$ defined by
\begin{equation}\label{eqn3}
(s,t)\mapsto t(s/t).
\end{equation}
Note that $\alpha(s,s)=s$ for all $s\in S$.

\begin{remark}
Let $s,t\in S$ be such that $t\leq_R s$. Then $s=(s/t)t$ and $\alpha(s,t)=t(s/t)$. Suppose $f$ be a left l.c.m. selector on $S$ in $M$. Since $f(s,t)$ represents the element $s/t$, we have $s=_M f(s,t)t$ and $\alpha(s,t)=_M tf(s,t)$. Thus $tst^{-1}=_G tf(s,t)=_G \alpha(s,t)$, where $G$ is the left Gaussian group corresponding to $M$.
\end{remark}

Let $(M,S)$ be a pair that satisfies conditions \eqref{condition7} and \eqref{condition8}. Then, we have a map $\beta:S\times S\to S$ defined by
\begin{equation}\label{eqn4}
(s,t)\mapsto (s\widetilde{\vee} t)/(s/t).
\end{equation}
Note that $\beta(s,s)=s$ for all $s\in S$.

\begin{remark}
Let $s$ and $t$ be elements in $S$. Then $s\widetilde{\vee} t=\left((s\widetilde{\vee} t)/(s/t)\right)(s/t)$. In other words, $(s/t)t=\left((s\widetilde{\vee} t)/(s/t)\right)(s/t)$. Suppose $f$ be a left l.c.m. selector on $S$ in $M$. Since $f(s,t)$ represents the element $s/t$ and $\beta(s,t)=(s\widetilde{\vee} t)/(s/t)$, we have $f(s,t)t =_M \beta(s,t)f(s,t)$. Thus $f(s,t)tf(s,t)^{-1} =_G \beta(s,t)$, where $G$ is the left Gaussian group corresponding to $M$.
\end{remark}

The next theorem is analogous to Theorem \ref{thm-present-dehn-quandle-right-gaussian-group}.

\begin{theorem}\label{thm-present-dehn-quandle-left-gaussian-group}
Suppose $(M,S)$ be a pair of type $\mathcal{L}_1$. Let $\alpha$ and $\beta$ be maps defined by \eqref{eqn3} and \eqref{eqn4}, respectively. Suppose $G$ be the left Gaussian group corresponding to $M$ and $f$ a left l.c.m. selector on $S$ in $M$. For $(s,t)\in S\times S$ with $s\neq t$, let $f_1(s,t),f_2(s,t),\ldots,f_{n_{st}}(s,t)$ be elements in $S$ such that $f(s,t)=f_1(s,t)f_2(s,t)\cdots f_{n_{st}}(s,t)$. Then, the Dehn quandle $\mathcal{D}(S^G)$ has a presentation with $S$ as its set of generators and defining relations as:
\begin{enumerate}[(1)]
\item $s*t=\alpha(s,t)$\quad if $t\leq_R s$,
\item $t*f_{n_{st}}(s,t)*f_{n_{st}-1}(s,t)*\cdots *f_1(s,t)=\beta(s,t)$,
\item $u*t*f_{n_{st}}(s,t)*f_{n_{st}-1}(s,t)*\cdots *f_1(s,t)=u*s*f_{n_{ts}}(t,s)*f_{n_{ts}-1}(t,s)*\cdots *f_1(t,s)$
\end{enumerate}
for $s,t,u\in S$ with $s\neq t$.
\end{theorem}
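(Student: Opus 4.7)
The proof is the exact left analogue of the proof of Theorem \ref{thm-present-dehn-quandle-right-gaussian-group}, and I would follow the same two-step strategy: first verify that the three listed equalities are valid relations in $\mathcal{D}(S^G)$, and then show that every relation in $\mathcal{D}(S^G)$ is a consequence of these together with the quandle axioms.

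For the first step, using the quandle operation $x * y = yxy^{-1}$ together with the identities $s =_M f(s,t)\, t$ (when $t \leq_R s$), $f(s,t)\, t =_M \beta(s,t)\, f(s,t)$, and $f(s,t)\, t =_G f(t,s)\, s$ (the defining complemented relation of $M$), direct computation gives:
\begin{align*}
s * t &\;=\; t\, s\, t^{-1} \;=\; t\, f(s,t) \;=\; \alpha(s,t) \qquad \text{when } t \leq_R s; \\
t * f_{n_{st}}(s,t) * \cdots * f_1(s,t) &\;=\; f(s,t)\, t\, f(s,t)^{-1} \;=\; \beta(s,t); \\
u * t * f_{n_{st}}(s,t) * \cdots * f_1(s,t) &\;=\; \bigl(f(s,t)\, t\bigr)\, u\, \bigl(f(s,t)\, t\bigr)^{-1},
\end{align*}
and the right-hand side of (3) is $(f(t,s)\, s)\, u\, (f(t,s)\, s)^{-1}$, which is equal in $G$ to the previous expression.

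For the second step, I would build the left analogues of all intermediate machinery of Section \ref{sec-gaussian-and-garside-quandles}. Namely, define a \emph{left $f$-equivalence} $\simeq_L^f$ on $S^{(S\cup S^{-1})^*}$ generated by the three transformations $\text{L}_\alpha$, $\text{L}_{f,\beta}$ and $\text{L}_G$, which mirror $\text{R}_\alpha$, $\text{R}_{f,\beta}$ and $\text{R}_G$ but are written in terms of the left residue $/$, the left l.c.m.\ $\widetilde{\vee}$, the maps $\alpha$ and $\beta$ from \eqref{eqn3} and \eqref{eqn4}, and the complemented presentation $g(s,t)\, t = g(t,s)\, s$. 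Then I would prove, in order, the left analogues of Lemma \ref{lem-f-equivalence}, Lemma \ref{lem-1-right-equivalence}, Lemma \ref{lem-2-right-equivalence}, Lemma \ref{lem-3-right-equivalence}, Lemma \ref{lem-4-right-equivalence}, Lemma \ref{lem-5-right-equivalence}, and finally the left analogue of Theorem \ref{thm-right-equivalence}: two words in $S^{(S\cup S^{-1})^*}$ are left $f$-equivalent if and only if they represent the same element of $G$. Each proof proceeds by induction on the atomic length $\nu(\bar w)$, splitting into cases according to whether a given atom is, or is not, a right divisor of the relevant atom, and in the subtle inductive step one invokes condition \eqref{condition10} precisely where the right-handed proof invokes condition \eqref{condition5}. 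Lemma \ref{lem-4-right-equivalence} translates to: every $x \in (S\cup S^{-1})^*$ can be written as $y^{-1}z$ for some $y, z \in S^*$, which follows from the left-handed version of \cite[Theorem 4.2]{Dehornoy-Paris-1999}. Finally, I would rewrite the equivalence $\simeq_L^f$ using conjugation-style transformations (the left analogue of Lemma \ref{lem-1-present-dehn-quandle-right-gaussian-group}), and combine with Lemma \ref{lem-2-present-dehn-quandle-right-gaussian-group} (a purely quandle-axiomatic fact, so it applies unchanged) to reduce an arbitrary such transformation to a consequence of the relations (1), (2), (3) listed in the statement.

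The main obstacle is purely bookkeeping: every occurrence of $\leq_L$, $\backslash$, $\vee$, $sf(s,t)=tf(t,s)$, and ``$xy \leq_L sxy$'' in the right-handed proof must be consistently replaced by $\leq_R$, $/$, $\widetilde{\vee}$, $f(s,t)t=f(t,s)s$, and ``$yx \leq_R yxs$'', and the direction in which the atoms appear in the symmetric words $x^{-1}sx$ (which is unaffected by left vs.\ right conventions) must be tracked carefully when computing the left-associated quandle products. Once the lemmas are restated correctly, the inductive arguments, the right-cancellation steps, and the final reduction to the three relation schemes carry over verbatim, and the theorem follows.
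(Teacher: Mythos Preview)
Your proposal is correct and follows exactly the approach the paper intends: the paper explicitly states that ``this subsection is a left analogue of the preceding section and we present it without details,'' so the proof of Theorem \ref{thm-present-dehn-quandle-left-gaussian-group} is meant to be the mirror of that of Theorem \ref{thm-present-dehn-quandle-right-gaussian-group}, and your sketch carries out precisely this mirroring, correctly identifying the substitutions $\leq_L \leadsto \leq_R$, $\backslash \leadsto /$, $\vee \leadsto \widetilde{\vee}$, condition \eqref{condition5} $\leadsto$ condition \eqref{condition10}, and the complemented relation $sf(s,t)=tf(t,s) \leadsto f(s,t)t=f(t,s)s$.
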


In view of Proposition \ref{prop-pairs-of-types-l}, Theorem \ref{thm-present-dehn-quandle-left-gaussian-group} is also true for pairs of types $\mathcal{L}_2$ through $\mathcal{L}_9$. Further, we can deduce corollaries of Theorem \ref{thm-present-dehn-quandle-left-gaussian-group} analogous to corollaries \ref{cor-present-right-gaussian-quandle} and \ref{cor-present-garside-quandle-r}.
\medskip

\subsection{Examples of presentations of Garside quandles}\label{examples of presentations}
Theorem \ref{thm-present-dehn-quandle-right-gaussian-group} and Corollary \ref{cor-present-garside-quandle-r} can be used to write presentations of certain Garside quandles.

\begin{example}\label{spherical artin}
Let $\mathcal{A}$ be an Artin group with Artin presentation 
\begin{equation}\label{presentation of artin group}
\left\langle S~|~ (st)_{m_{st}}=(ts)_{m_{ts}}\; \text{for}\; s,t\in S\; \text{with}\; s\neq t\right\rangle,
\end{equation}
 where $S$ is a finite set, $(uv)_k$ denotes the word $uvuvu\cdots$ of length $k$, $m_{st}=m_{ts}\geq 2$ for all $s\neq t$ and $m_{st}=\infty$ if there is no relation. Let $\mathcal{M}$ be the monoid with the same presentation as above, i.e. $M$ is the quotient of the free monoid $S^*$ by relations $(st)_{m_{st}}=(ts)_{m_{ts}}$ for all $s\neq t$. We refer to such a monoid as an {\it Artin monoid} and call it of {\it spherical type} if the corresponding Artin group is of spherical type. Further, we call the Dehn quandle $\mathcal{D}(S^{\mathcal{A}})$ as an {\it Artin quandle} and say that it is of {\it spherical type} if the Artin group $\mathcal{A}$ is of spherical type. Note that, in these definitions, $S$ is an Artin generating set for $\mathcal{A}$. 
\par

For the further discussion, we assume that $\mathcal{M}$ is a spherical Artin monoid. It follows from \cite{Brieskorn-Saito-1972} (see also \cite[Example 1]{Dehornoy-Paris-1999}) that $\mathcal{M}$ is a Garside monoid. Note that, for $s\neq t$, the words $(st)_{m_{st}}$ and $(ts)_{m_{ts}}$ both represent the right l.c.m. of $s$ and $t$. It is easy to see that the pair $(\mathcal{M},S)$ satisfies conditions \eqref{condition2} and \eqref{condition4}. By Lemma \ref{lem-conditions-ii-and-iv}, the set $S$ is the set of atoms in $\mathcal{M}$. Thus, $\mathcal{M}$ is a Garside monoid of type $\mathcal{R}_9$, and hence the Artin quandle $\mathcal{D}(S^{\mathcal{A}})$ is a Garside quandle of type $\mathcal{R}_9$. Again by Lemma \ref{lem-conditions-ii-and-iv}, the pair $(\mathcal{M},S)$ satisfies condition (\ref{condition3}), and thus we have the map $\beta:S\times S\to S$ defined by $(s,t)\mapsto(s\backslash t)\backslash(s\vee t)$. Let $f:S\times S\to S^*$ be the map defined by $f(s,t)=\epsilon$ if $s=t$ and $f(s,t)=(ts)_{m_{st}-1}$ if $s\neq t$, where $\epsilon$ is the empty word. For $s\neq t$, since $s(ts)_{m_{st}-1}=(st)_{m_{st}}$ represents $s(s\backslash t)=s\vee t$, the word $(ts)_{m_{st}-1}$ must represent $s\backslash t$. Thus, $f$ is a right l.c.m. selector on $S$ in $\mathcal{M}$. For $s\neq t$ and $i=0,1,\ldots, m_{st}$, let $f_i(s,t)=s$ if $i$ is even and $f_i(s,t)=t$ if $i$ is odd. Then, for any $s\neq t$ and any $i=1,2,\ldots,m_{st}$, we have $f_i(s,t)= f_{i-1}(t,s)$ and $f(s,t)=f_1(s,t)f_2(s,t)\cdots f_{m_{st}-1}(s,t)$. Let $s\neq t$ be elements in $S$. Then
\begin{align*}
f(s,t)f_{m_{st}}(s,t)&=(ts)_{m_{st}-1}f_{m_{st}}(s,t)\\
&=(ts)_{m_{ts}-1}f_{m_{ts}-1}(t,s)\\
&=tf_1(t,s)f_2(t,s)\cdots f_{m_{ts}-1}(t,s)\\
&=tf(t,s)\\
&=t(st)_{m_{ts}-1}\\
&=(ts)_{m_{ts}}. 
\end{align*}
Since $(\mathcal{M},S)$ satisfies condition \eqref{condition2}, we have $(s\backslash t)\left((s\backslash t)\backslash(s\vee t)\right)=s\vee t$. Since $f(s,t)f_{m_{st}}(s,t)=(ts)_{m_{ts}}$ represents $(s\backslash t)\left((s\backslash t)\backslash(s\vee t)\right)=s\vee t$ and $f(s,t)$ represents $s\backslash t$, it follows that $f_{m_{st}}(s,t)$  represents $(s\backslash t)\backslash(s\vee t)$. Thus, we have $\beta(s,t)= f_{m_{st}}(s,t)$. By Corollary \ref{cor-present-garside-quandle-r}, the Artin quandle $\mathcal{D}(S^{\mathcal{A}})$ has a presentation with $S$ as its set of generators and defining relations as follows:
\begin{align}
&f_{m_{st}}(s,t)*f_{m_{st}-1}(s,t)*\cdots *f_1(s,t)=f_0(s,t)\qquad \text{and}\label{eqn26}\\
&u*f_{m_{st}-1}(s,t)*f_{m_{st}-2}(s,t)*\cdots *f_1(s,t)*f_0(s,t)\label{eqn27}\\
&=u*f_{m_{ts}-1}(t,s)*f_{m_{ts}-2}(t,s)*\cdots *f_1(t,s)*f_0(t,s)\nonumber
\end{align}
for $s,t,u\in S$ with $s\neq t$ (note that $s=f_0(s,t)$ and $t=f_0(t,s)$). We shall show that \eqref{eqn27} can be obtained by \eqref{eqn26}. Let $s,t,u\in S$ with $s\neq t$. Then
\begin{eqnarray*}
&& u*f_{m_{st}-1}(s,t)*f_{m_{st}-2}(s,t)*\cdots *f_1(s,t)*f_0(s,t)\\
&=& \left(u*f_{m_{st}-1}(s,t)*f_{m_{st}-2}(s,t)*\cdots *f_1(s,t)\right)*\left(f_{m_{st}}(s,t)*f_{m_{st}-1}(s,t)*\cdots *f_1(s,t)\right)\\
&=& u*f_{m_{st}}(s,t)*f_{m_{st}-1}(s,t)*\cdots *f_1(s,t)\\
&=& u*f_{m_{ts}-1}(t,s)*f_{m_{ts}-2}(t,s)*\cdots *f_1(t,s)*f_0(t,s),
\end{eqnarray*}
where the first equality is by \eqref{eqn26}, the second is by Lemma \ref{lem-2-present-dehn-quandle-right-gaussian-group} and third is by relations $f_i(s,t)=f_{i-1}(t,s)$. This shows that \eqref{eqn27} is redundant. Hence, if $\mathcal{A}$ is spherical, then the  Artin quandle $\mathcal{D}(S^{\mathcal{A}})$ has a presentation 
\begin{eqnarray}\label{presentation of artin using garside}
\mathcal{D}(S^{\mathcal{A}}) &=& \Bigg\langle S \mid (s*t)_{m_{st}}=s\quad \text{if}\; m_{st}\; \text{is even, and}\quad\\
\nonumber && (t*s)_{m_{st}}=s\quad \text{if}\; m_{st}\; \text{is odd for all}~ s\neq t \Bigg\rangle,
\end{eqnarray}
where  $(u*v)_k$ denotes the left associated product $u*v*u*v*u\cdots$ of length $k$.
\end{example}

\begin{example}
Let $M$ be the Garside monoid as in Example \ref{example1} and $G$ its group of fractions. The set $S=\{x_1,x_2,\ldots,x_n\}$ is the set of atoms in $M$ (see the proof of Proposition \ref{prop-dehornoy-paris-1}, i.e. \cite[Proposition 5.2]{Dehornoy-Paris-1999}). It can be seen that, for each $i\neq j$, the word $x_i^{p_i}$ represents $x_i\vee x_j$. Let $f(x_i,x_i)=\epsilon$ for $i=1,2,\ldots,n$ and $f(x_i,x_j)=x_i^{p_i-1}$ for $i\neq j$, where $\epsilon$ is the empty word. Then, $f$ is a right l.c.m. selector on $S$ in $M$. Let $\beta(x_i,x_j)=x_i$ for any $i$ and $j$. Then $x_if(x_i,x_j)=f(x_i,x_j)\beta(x_i,x_j)$ which, of course, represents $x_i\vee x_j$. In other words, $\beta(x_i,x_j)=(x_i\backslash x_j)\backslash(x_i\vee x_j)$. Then, by Corollary \ref{cor-present-garside-quandle-r}, the Garside quandle $\mathcal{D}(S^G)$ has a presentation 
$$\mathcal{D}(S^G)= \langle x_1,x_2,\ldots,x_n \mid x_i*^{p_j}x_j=x_i  \quad \textrm{for all} \quad  i\neq j  \rangle.$$
\end{example}

\begin{example} \label{example-torus-link-quandle}
Let $M$ be the Garside monoid as in Example \ref{example2} and $G$ its group of fractions. The set $S=\{x_1,x_2,\ldots,x_n\}$ is the set of atoms in $M$. For an integer $i$, let $x_i=x_j$ for $1\leq j\leq n$ such that $i\equiv j\,(\!\!\!\!\mod n)$. One can see that, for each $i\neq j$, the word $x_ix_{i+1}\cdots x_{i+m-1}$ represents $x_i\vee x_j$. Let $f(x_i,x_i)=\epsilon$ for $i=1,2,\ldots,n$ and $f(x_i,x_j)=x_{i+1}x_{i+2}\cdots x_{i+m-1}$ for $i\neq j$, where $\epsilon$ is the empty word. Then, $f$ is a right l.c.m. selector on $S$ in $M$. Let $\beta(x_i,x_i)=x_i$ for $i=1,2,\ldots,n$ and $\beta(x_i,x_j)=x_{i+m}$ for $i\neq j$. Then $x_if(x_i,x_j)=f(x_i,x_j)\beta(x_i,x_j)$, i.e. $\beta(x_i,x_j)=(x_i\backslash x_j)\backslash(x_i\vee x_j)$. Then, by Corollary \ref{cor-present-garside-quandle-r}, $\mathcal{D}(S^G)$ is generated by $S$ and has defining relations
\begin{eqnarray*}
x_{i+m}*x_{i+m-1}*\cdots*x_{i+1} &=& x_i,  \label{example 2.31 relation 1}\\
x_i*x_{j+m-1}*x_{j+m-2}*\cdots*x_j&=&x_i*x_{k+m-1}*x_{k+m-2}*\cdots*x_k \label{example 2.31 relation 2}
\end{eqnarray*}
for all $i,j,k=1,2,\ldots,n$ with $j \neq k$. Note that relations of second type are redundant. For, we have
\begin{align*}
x_i*x_{j+m-1}*x_{j+m-2}*\cdots*x_j&=(x_i*x_{j+m-1}*x_{j+m-2}*\cdots*x_{j+1})*(x_{j+m}*x_{j+m-1}*\cdots*x_{j+1})\\
&=x_i*x_{j+m}*x_{j+m-1}*\cdots*x_{j+1}
\end{align*}
for all $i,j=1,2,\ldots,n$, where the first equality follows from relations of first type and the second follows from Lemma \ref{lem-2-present-dehn-quandle-right-gaussian-group}. Iterating the process yields relations of second type, and hence the final presentation is
$$\mathcal{D}(S^G) = \langle x_1,x_2,\ldots,x_n \mid x_{i+m}*x_{i+m-1}*x_{i+m-2}*\cdots*x_{i+1}=x_i~ \textrm{for} ~i=1,2,\ldots,n \rangle.$$
\end{example}

\begin{remark}\label{torus link quandle}
It follows from \cite[Proposition 7.1]{MR4330281} that the link quandle $Q(T(n,m))$ of the torus link $T(n,m)$ has a presentation $$Q(T(n,m)) = \langle x_1,x_2,\ldots,x_n \mid x_{m+i}*x_m*x_{m-1}*\cdots*x_1=x_i~ \textrm{for} ~1\leq i\leq n \rangle,$$ where $x_l=x_k$ for $l\in\mathbb{Z}$ and $1\leq k\leq n$ such that $l\equiv k\pmod n$. Using induction on $j$ and relations $x_{m+i}*x_m*x_{m-1}*\cdots*x_1=x_i$, we can obtain relations $x_{m+i}*x_{m+j}*x_{m+j-1}*\cdots*x_{j+1}=x_i$ for $i,j=1,2,\ldots,n$. Further, it suffices to consider only $x_{m+i}*x_{m+i-1}*x_{m+i-2}*\cdots*x_{i+1}=x_i$  among latter relations. Thus, $Q(T(n,m))$ is isomorphic to the Garside quandle in Example \ref{example-torus-link-quandle}.
\end{remark}

\begin{example}
Let $M$ be the Garside monoid as in Example \ref{example3} and $G$ be its group of fractions. The set $S=\{x_1,x_2,y_1,y_2,y_3\}$ is the set of atoms in $M$. Let $p_1=2$ and $p_2=5$. For an integer $i$, let $y_i=y_j$ for $1\leq j\leq 3$ such that $i\equiv j\,(\!\!\!\!\mod 3)$. Using Corollary \ref{cor-present-garside-quandle-r} and after reducing the 
relations, we get a presentation 
\begin{eqnarray*}
\mathcal{D}(S^G) &=& \Bigg\langle x_1,x_2,y_1,y_2,y_3 \mid y_{k+4}*y_{k+3}*y_{k+2}*y_{k+1}=y_k, \quad x_i*^{p_j}x_j=x_i,\\
&&  x_i*y_1*y_3*y_2*y_1=x_i, \quad y_{k+4}*^{p_i}x_i=y_k ~\textrm{for}~ i,j=1,2~\textrm{with} ~i\neq j~\textrm{and}~k=1,2,3 \Bigg\rangle.
\end{eqnarray*}
\end{example}

\begin{example}
Let $M$ be the Garside monoid as in Example \ref{example4} and $G$ its group of fractions. The set $S=\{x_1,x_2,x_3,y_1,y_2,y_3\}$ is the set of atoms in $M$. Let $\Delta_1$ and $\Delta_2$ be minimal Garside elements in underling Artin monoids $M_1$ and $M_2$ of type $B_3$ and $A_3$, respectively. Then, the words $(x_1x_2x_3)^3$, $(x_2x_3x_1)^3$ and $(x_3x_1x_2)^3$ all represent $\Delta_1$, and the words $y_1y_2y_3y_1y_2y_1$, $(y_1y_3y_2)^2$, $(y_2y_1y_3)^2$ and $(y_3y_1y_2)^2$ all represent $\Delta_2$ (see \cite[examples 3 and 4]{Birman-Gebhardt-Meneses-2007} for the minimal Garside elements in Artin monoids of spherical type). Note that $M=(M_1 \hexstar M_2)/\equiv$, where $\equiv$ is the equivalence relation on $M_1  \hexstar M_2$ generated by $\Delta_1^2 =\Delta_2^3$. One can see that both $\Delta_1^2$ and $\Delta_2^3$ represent $x_i\vee y_j$ in $M$ for $i,j=1,2,3$. For an integer $i$, let $x_i=x_j$ and $y_i=y_j$ for $1\leq j\leq 3$ such that $i\equiv j\,(\!\!\!\!\mod 3)$. Define $f(x_i,y_j)=x_{i+1}x_{i+2}\cdots x_{i+17}$, $f(y_k,x_j)=y_{k+17}y_{k+16}\cdots y_{k+1}$ and $f(y_3,x_j)=y_1y_2\cdots y_{17}$ for $i,j=1,2,3$ and $k=1,2$. Then $f$ can be extended to a right l.c.m. selector on $S$ in $M$. For $i=1,2$, let $\psi_i$ be the permutation of the set of divisors of $\Delta_i$ given by $x\mapsto x\backslash\Delta_i$, and $\phi_i$ be the automorphism of $M_i$ defined by $\phi_i(x)=\psi_i^2(x)$ for a divisor $x$ of $\Delta_i$ (see \cite[lemmas 2.2 and 2.3]{Dehornoy2002}). One can verify that $x\psi_i(x)=\Delta_i=\psi_i(x)\phi_i(x)$ and $x\Delta_i=\Delta_i\phi_i(x)$ for a divisor $x$ of $\Delta_i$. Note that $\phi_i$ maps atoms to atoms. Let $z_1=y_3$, $z_2=y_2$ and $z_3=y_1$. It is easy to see that $\phi_1(x_i)=x_i$ and $\phi_2(y_i)=z_i$ for $i=1,2,3$. Thus $$x_i\psi_1(x_i)\Delta_1=\Delta_1^2=\psi_1(x_i)\Delta_1 x_i$$ and $$y_i\psi_2(y_i)\Delta_2^2=\Delta_2^3=\psi_2(y_i)\Delta_2^2z_i$$ for $i=1,2,3$. Define $\beta(x_i,y_j)=x_i$ and $\beta(y_j,x_i)=z_j$ for $i,j=1,2,3$. Then $\beta(x_i,y_j)=(\psi_1(x_i)\Delta_1)\backslash \Delta_1^2=(x_i\backslash y_j)\backslash(x_i\vee y_j)$ and $\beta(y_j,x_i)=(\psi_2(y_j)\Delta_2^2)\backslash \Delta_2^3=(y_j\backslash x_i)\backslash(y_j\vee x_i)$ for all $i,j=1,2,3$. Note that $\beta$ can be extended to the map $S\times S\to S$ defined as in \eqref{eqn2}. Using Corollary \ref{cor-present-garside-quandle-r} and after reducing the relations, we see that $\mathcal{D}(S^G)$ has a presentation 
\begin{eqnarray*}
&& \Bigg\langle x_1,x_2,x_3,y_1,y_2,y_3 \mid x_1*x_2*x_1*x_2=x_1*x_3=x_1,\quad x_2*x_1*x_2*x_1=x_3*x_2*x_3=x_2,\\
&& x_3*x_1=x_2*x_3*x_2= x_3, \quad y_2*y_1*y_2=y_1*y_3=y_1,\quad y_1*y_2*y_1=y_3*y_2*y_3=y_2,\\
&& y_3*y_1=y_2*y_3*y_2= y_3,\quad x_{i+18}*x_{i+17}*\cdots*x_{i+1}=x_i, \quad y_1*y_2*\cdots*y_{12}=y_1,\\
&& y_2*y_3*\cdots*y_{19}=y_2,\quad y_{12}*y_{11}*\cdots*y_1=y_3, \quad  x_i*y_2*y_3*\cdots*y_{19}=x_i,\\
&& y_1*x_{18}*x_{17}*\cdots *x_1=y_1*y_2*\cdots* y_7,\quad y_2*x_{18}*x_{17}*\cdots *x_1=y_2,\\
&& y_3*x_{18}*x_{17}*\cdots *x_1=y_1~\textrm{for}~ i=1,2,3 \Bigg\rangle.
\end{eqnarray*}
\end{example}

\medskip

\section{Presentations of Dehn quandles of groups}\label{section-present-dehn-quandles}
In this section, we prove a general result giving presentations of  Dehn quandles of groups when the centraliser of each generator is known. Although the result is general, determining generating sets for centralisers of elements in interesting classes of groups like Garside groups and Artin groups is usually challenging. See, for example, \cite{Birman-Gebhardt-Meneses-2007, Franco-Meneses-2003, MR2023190, Gebhardt-2005, MR0747249, MR2103472, Picantin2001b} for related works. Further, presentations obtained for Garside quandles using Theorem \ref{presentation-dehn-quandle} usually have larger number of relations than the one given by Theorem \ref{thm-present-dehn-quandle-right-gaussian-group}. We shall see many examples later in this section.

\subsection{Presentations of Dehn quandles} 
The following theorem gives a presentation of the Dehn quandle of a group $G$ with respect to a generating set $S$ when a generating set for the centraliser of each element in $S$ is known. 

\begin{theorem}\label{presentation-dehn-quandle}
Let $G$ be a group with a presentation $\langle S\mid R\rangle$. For $s \in S$, let $A_s$ be a generating set for the centraliser $\C_G(s)$ of $s$ in $G$. Let $T=\{(s,t)\in S\times S ~|~ s~\text{and}~ t~\text{are conjugate in}\;G\}$, and $f : T \to G$ a map such that 
\begin{itemize}
\item $f(t,s)sf(t,s)^{-1} = t$,
\item $f(t,s)= f(s,t)^{-1}$,
\item $f(u,t)f(t,s) = f(u,s)$
\end{itemize}
for all $(s,t), (t, u)\in T$. Then, the Dehn quandle $\dq(S^G)$ has a presentation with the generating set $S$ and defining relations as follows:
\begin{enumerate}[(1)]
\item For each $s \in S$ and each relation $r=s_{r_1}^{\delta_1}s_{r_2}^{\delta_2}  \cdots s_{r_k}^{\delta_k}$ in $R$, where $s_{r_i} \in S$ and $\delta_i \in   \{-1,1\}$, we have
$$s*^{\delta_{k}}s_{r_k}*^{\delta_{k-1}}s_{r_{k-1}}*^{\delta_{k-2}}\cdots *^{\delta{1}}s_{r_1}=s.$$ 

\item For each $s \in S$   and each $w=s_{w_1}^{\epsilon_1}s_{w_2}^{\epsilon_2}  \cdots s_{w_l}^{\epsilon_l}$ in $A_s$, where $s_{w_i} \in S$ and $\epsilon_i \in   \{-1,1\}$, we have
$$s*^{\epsilon_l}s_{w_l}*^{\epsilon_{l-1}}s_{w_{l-1}}*^{\epsilon_{l-2}}\cdots *^{\epsilon_1}s_{w_1}=s.$$

\item For each $(s, t )\in T$ such that $f(t, s)= f_1(t, s)^{\mu_1}f_2(t, s)^{\mu_2}\cdots f_n(t, s)^{\mu_n}$, where $f_i(t, s)\in S$ and $\mu_i\in\{-1,1\}$, we have 
$$s*^{\mu_n}f_n(t, s)*^{\mu_{n-1}} f_{n-1}(t, s)*^{\mu_{n-2}}\cdots*^{\mu_1}f _1(t, s)=t.$$ 
\end{enumerate}
\end{theorem}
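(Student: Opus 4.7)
The plan is to show that the quandle $\widetilde{Q}$ presented by the generators $S$ and relations (1)--(3) in the statement is isomorphic to $\dq(S^G)$. I first verify that these relations hold in $\dq(S^G)$: relation (1) because each $r \in R$ equals $1$ in $G$; relation (2) because $A_s \subseteq \C_G(s)$; relation (3) as a direct translation of $f(t,s)sf(t,s)^{-1} = t$. This yields a surjective quandle morphism $\pi : \widetilde{Q} \to \dq(S^G)$ that is the identity on $S$; surjectivity is immediate since, for $x = gsg^{-1} \in \dq(S^G)$ with $g = s_{i_1}^{\epsilon_1} \cdots s_{i_n}^{\epsilon_n}$, the element $s *^{\epsilon_n} s_{i_n} *^{\epsilon_{n-1}} \cdots *^{\epsilon_1} s_{i_1} \in \widetilde{Q}$ maps to $x$.

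To build an inverse $\phi$, I will introduce a $G$-action on $\widetilde{Q}$ by quandle automorphisms. Right translation $R_s(x) = x*s$ is a quandle automorphism by self-distributivity, so there is a group homomorphism $\widetilde{\Phi} : F(S) \to \Aut(\widetilde{Q})$ sending $s \mapsto R_s$. For $\widetilde{\Phi}$ to descend to a map $\Phi : G \to \Aut(\widetilde{Q})$, I must check that $\widetilde{\Phi}(r) = \id_{\widetilde{Q}}$ for every relator $r = s_{r_1}^{\delta_1} \cdots s_{r_k}^{\delta_k} \in R$, that is,
\[
x *^{\delta_k} s_{r_k} *^{\delta_{k-1}} \cdots *^{\delta_1} s_{r_1} = x \qquad \text{for every } x \in \widetilde{Q}.
\]
Relation (1) gives this when $x \in S$, and the crux is to extend it to all of $\widetilde{Q}$. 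Writing $x$ in left-associated form as $s *^{\eta_1} u_1 *^{\eta_2} \cdots *^{\eta_m} u_m$, I induct on $m$: if $x = y *^{\eta_m} u_m$, Lemma \ref{lem-2-present-dehn-quandle-right-gaussian-group} rewrites
\[
x *^{\delta_k} s_{r_k} \cdots *^{\delta_1} s_{r_1} \;=\; \bigl(y *^{\delta_k} s_{r_k} \cdots *^{\delta_1} s_{r_1}\bigr) *^{\eta_m} \bigl(u_m *^{\delta_k} s_{r_k} \cdots *^{\delta_1} s_{r_1}\bigr),
\]
and the two factors reduce to $y$ and $u_m$ by the inductive hypothesis and by relation (1) applied to the atom $u_m \in S$, respectively.

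With $\Phi : G \to \Aut(\widetilde{Q})$ in hand, set $\phi(gsg^{-1}) := \Phi(g)(s)$. To show $\phi$ is well defined, suppose $gsg^{-1} = g's'g'^{-1}$ and put $h := g'^{-1}g$, so $hsh^{-1} = s'$. When $s = s'$, we have $h \in \C_G(s)$; writing $h$ as a word in $A_s^{\pm 1}$ and iterating relation (2) (which is exactly $\Phi(a)(s) = s$ for $a \in A_s$, and hence also for $a^{-1}$) yields $\Phi(h)(s) = s$, whence $\Phi(g)(s) = \Phi(g')(s)$. When $s \neq s'$, $(s', s) \in T$ and $h = f(s',s)k$ with $k \in \C_G(s)$; relation (3) gives $\Phi(f(s',s))(s) = s'$ and the previous case yields $\Phi(k)(s) = s$, so again $\Phi(g)(s) = \Phi(g')(s')$. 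The identities $\pi \circ \phi = \id_{\dq(S^G)}$ and $\phi \circ \pi = \id_{\widetilde{Q}}$ are immediate on generators. The main obstacle is the inductive extension of relation (1) from $S$ to all of $\widetilde{Q}$; once that produces a well-defined $G$-action $\Phi$, the well-definedness of $\phi$ follows routinely from relations (2) and (3).
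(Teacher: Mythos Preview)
Your proof is correct and follows essentially the same route as the paper's. The paper defines the surjection $\phi\colon Q\to\dq(S^G)$ and proves injectivity directly: it first establishes (via Lemma~\ref{lem-2-present-dehn-quandle-right-gaussian-group} and relation~(1)) that two words equal in $G$ act identically on each generator of $Q$, and then uses relations~(2) and~(3) to show that two elements of $Q$ with the same image must coincide. Your argument packages the first step as the construction of a $G$-action $\Phi$ on $\widetilde Q$ and the second step as the well-definedness of an explicit inverse $\phi(gsg^{-1})=\Phi(g)(s)$; the computations and the use of the rewriting lemma are identical.
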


\begin{proof}
Let $\psi: S \to \mathbf{S}$ be a bijection of $S$ onto another set $\mathbf{S}$, where, for brevity, we denote $\psi(s)$ by $\mathbf{s}$. Let $(Q,\star)$ be a quandle that has  a presentation with the set of generators $\mathbf{S}$ and defining relations as in (1), (2) and (3) written in terms of elements of $\mathbf{S}$. We write elements of $Q$ in bold to differentiate them from elements of $\dq(S^G)$. We claim that $\dq(S^G) \cong Q$. 
\par

Let $\phi : \mathbf{S} \to S$ given by $\phi(\mathbf{s})= s$ be the inverse of $\psi$. The map $\phi$ induces a quandle homomorphism $\phi:Q\to \dq(S^G)$, which is clearly surjective. It suffices to show that $\phi$ is injective.
\par

We first claim that if $x=s_{x_1}^{\eta_1}s_{x_2}^{\eta_2}\cdots s_{x_n}^{\eta_n}$ and $y=s_{y_1}^{\theta_1}s_{y_2}^{\theta_2}\cdots s_{y_m}^{\theta_m}$, where $s_{x_i}, s_{y_j}\in S$ and $\eta_i,\theta_j\in \{1,-1\}$, represent the same element of $G$, then 
$$\mathbf{s}\star^{\eta_n}\mathbf{s}_{x_n}\star^{\eta_{n-1}}\mathbf{s}_{x_{n-1}}\star^{\eta_{n-2}}\cdots\star^{\eta_1}\mathbf{s}_{x_1}=\mathbf{s}\star^{\theta_m}\mathbf{s}_{y_m}\star^{\theta_{m-1}}
\mathbf{s}_{y_{m-1}}\star^{\theta_{m-2}} \cdots \star^{\theta_1}\mathbf{s}_{y_1}$$ for all $\mathbf{s}\in\mathbf{S}$.

Since $x=y$ in $G$, there is a sequence of elements $x=g_0, g_1, g_2, \ldots, g_{\ell}=y$ in $G$ such that $g_i$ and $g_{i+1}$ differ by a single relation from $R\cup R'$, where $R'= \{s^{-1}s, ~ss^{-1} \mid s \in S\}$ is the set of trivial relations. Thus, it is enough to consider the case when $x$ and $y$ differ by a single relation i.e. when 
$x=s_{x_1}^{\eta_1}s_{x_2}^{\eta_2}\cdots s_{x_n}^{\eta_n}$ and $y=s_{x_1}^{\eta_1}s_{x_2}^{\eta_2}\cdots s_{x_i}^{\eta_i}r s_{x_{i+1}}^{\eta_{i+1}} \cdots  s_{x_n}^{\eta_n}$ for some $r\in R\cup R'$. If $r \in R'$, then the claim holds by the second quandle axiom (i.e. bijectivity of right multiplication). Suppose that $r=s_{r_{1}}^{\delta_1}s_{r_{2}}^{\delta_2}  \cdots s_{r_{k}}^{\delta_{k}}$ is an element of $R$. Then, Lemma \ref{lem-2-present-dehn-quandle-right-gaussian-group} gives
\begin{eqnarray*}
&& \mathbf{s}\star^{\eta_n}\mathbf{s}_{x_n}\star^{\eta_{n-1}} \cdots\star^{\eta_{i+1}}\mathbf{s}_{x_{i+1}}\star^{\delta_{k}}\mathbf{s}_{r_{k}}\star^{\delta_{k-1}}\mathbf{s}_{r_{k-1}}\star^{\delta_{k-2}}\cdots \star^{\delta{1}}\mathbf{s}_{r_1}\star^{\eta_i}\mathbf{s}_{x_{i}}\star^{\eta_{i-1}} \cdots\star^{\eta_1}\mathbf{s}_{x_1}\\
&=&(\mathbf{s}\star^{\delta_{k}}\mathbf{s}_{r_k}\star^{\delta_{k-1}}\mathbf{s}_{r_{k-1}}\star^{\delta_{k-2}}\cdots \star^{\delta{1}}\mathbf{s}_{r_1})\star^{\eta_n}(\mathbf{s}_{x_n}\star^{\delta_{k}}\mathbf{s}_{r_k}\star^{\delta_{k-1}}\mathbf{s}_{r_{k-1}}\star^{\delta_{k-2}}\cdots \star^{\delta{1}}\mathbf{s}_{r_1})\star^{\eta_{n-1}} \cdots\star^{\eta_{i+1}}\\
&& (\mathbf{s}_{x_{i+1}}\star^{\delta_{k}}\mathbf{s}_{r_k}\star^{\delta_{k-1}}\mathbf{s}_{r_{k-1}}\star^{\delta_{k-2}}\cdots \star^{\delta{1}}\mathbf{s}_{r_1})\star^{\eta_i}\mathbf{s}_{x_{i}}\cdots\star^{\eta_1}\mathbf{s}_{x_1}\\
&=& \mathbf{s}\star^{\eta_n}\mathbf{s}_{x_n}\star^{\eta_{n-1}}\cdots\star^{\eta_{i+1}}\mathbf{s}_{x_{i+1}}\star^{\eta_i}\mathbf{s}_{x_{i}} \star^{\eta_{i-1}}\cdots\star^{\eta_1}\mathbf{s}_{x_1}, \quad \textrm{by relation of type (1)}.
\end{eqnarray*}
This proves the claim.
\medskip

Now, to prove injectivity of $\phi$, take two elements $\mathbf{s}\star^{\alpha_p} \mathbf{s}_{a_p}\star^{\alpha_{p-1}}\mathbf{s}_{a_{p-1}}\star^{\alpha_{p-2}} \cdots\star^{\alpha_1}\mathbf{s}_{a_1}$ and $\mathbf{t}\star^{\beta_q} \mathbf{s}_{b_q}\star^{\beta_{q-1}}\mathbf{s}_{b_{q-1}}\star^{\beta_{q-2}}\cdots\star^{\beta_1}\mathbf{s}_{b_1}$ in $Q$ such that 
\begin{equation}\label{eq1}
s*^{\alpha_p}s_{a_p}*^{\alpha_{p-1}}s_{a_{p-1}}*^{\alpha_{p-2}}\cdots*^{\alpha_1}s_{a_1}=t*^{\beta_q}s_{b_q}*^{\beta_{q-1}}s_{b_{q-1}}*^{\beta_{q-2}}\cdots*^{\beta_1}s_{b_1}
\end{equation}
 in $\dq(S^G)$. Rewriting \eqref{eq1} gives  
 \begin{equation}\label{eq2}
 s*^{\alpha_p}s_{a_p}*^{\alpha_{p-1}}s_{a_{p-1}}*^{\alpha_{p-2}}\cdots*^{\alpha_1}s_{a_1}*^{-\beta_1}s_{b_1}*^{-\beta_2}s_{b_2}*^{-\beta_3} \cdots*^{-\beta_q}s_{b_q}=t.
 \end{equation}
Since $s$ and $t$ are conjugate in $G$,  we have $(s,t)\in T$. Thus, we have
 \begin{equation}\label{eq3}
s*^{\mu_n}f_{n}(t, s)*^{\mu_{n-1}}f_{n-1}(t, s)*^{\mu_{n-2}}\cdots *^{\mu_1}f_1(t, s)=t,
 \end{equation}
where $f(t, s) \in G$ such that $f(t, s)= f_1(t, s)^{\mu_1}f_2(t, s)^{\mu_2}\cdots f_n(t, s)^{\mu_n}$ for $f_i(t, s)\in S$ and $\mu_i\in\{-1,1\}$. Using \eqref{eq2} and \eqref{eq3}, we have 
\begin{eqnarray}\label{eq4}
\nonumber s &=& s*^{\alpha_p}s_{a_p}*^{\alpha_{p-1}}s_{a_{p-1}}*^{\alpha_{p-2}} \cdots*^{\alpha_1}s_{a_1}*^{-\beta_1}s_{b_1}*^{-\beta_2}s_{b_2}*^{-\beta_3}\cdots*^{-\beta_q}s_{b_q}\\
& & *^{-\mu_1}f_1(t, s)*^{-\mu_2}f_2(t, s)*^{-\mu_3}\cdots*^{-\mu_n}f_n(t, s).
 \end{eqnarray}

Writing \eqref{eq4} in terms of conjugation in $G$ implies that the element 
$$f_n(t, s)^{-\mu_n}f_{n-1}(t, s)^{-\mu_{n-1}}\cdots f_1(t, s)^{-\mu_1}s_{b_q}^{-\beta_q}s_{b_{q-1}}^{-\beta_{q-1}}\cdots s_{b_1}^{-\beta_1}s_{a_1}^{\alpha_1}s_{a_2}^{\alpha_2}\cdots s_{a_p}^{\alpha_p} $$
 commutes with $s$, and hence equals to an element, say, $w$ of $\C_G(s)$. Without loss of generality, we can assume that $w \in A_s$. If not, then $w$ will be a product of elements of $A_s$ and the argument will be similar. Let $w=s_{w_1}^{\partial_1}s_{w_2}^{\partial_2}  \cdots s_{w_l}^{\partial_l}$ written in terms of generators $S$ of $G$. Then the equality
 $$s_{w_1}^{\partial_1}s_{w_2}^{\partial_2}  \cdots s_{w_l}^{\partial_l}= f_n(t, s)^{-\mu_n}f_{n-1}(t, s)^{-\mu_{n-1}}\cdots f_1(t, s)^{-\mu_1}s_{b_q}^{-\beta_q}s_{b_{q-1}}^{-\beta_{q-1}}\cdots s_{b_1}^{-\beta_1}s_{a_1}^{\alpha_1}s_{a_2}^{\alpha_2}\cdots s_{a_p}^{\alpha_p}  $$
of elements in $G$ can be rewritten as
\begin{equation}\label{eq5}
s_{b_1}^{\beta_1}s_{b_2}^{\beta_2}\cdots s_{b_q}^{\beta_q}f_1(t, s)^{\mu_1}f_2(t, s)^{\mu_2}\cdots f_n(t, s)^{\mu_n}s_{w_1}^{\partial_1}s_{w_2}^{\partial_2}  \cdots s_{w_l}^{\partial_l}= s_{a_1}^{\alpha_1}s_{a_2}^{\alpha_2}\cdots s_{a_p}^{\alpha_p}.
 \end{equation}
Our earlier proved claim gives
\begin{eqnarray*}
& & \mathbf{s}\star^{\alpha_p} \mathbf{s}_{a_p}\star^{\alpha_{p-1}}\mathbf{s}_{a_{p-1}}\star^{\alpha_{p-2}} \cdots\star^{\alpha_1}\mathbf{s}_{a_1}\\
&=& \mathbf{s}\star^{\partial_l}\mathbf{s}_{w_l}\star^{\partial_{l-1}}\mathbf{s}_{w_{l-1}}\star^{\partial_{l-2}} \cdots \star^{\partial_1}\mathbf{s}_{w_1}\star^{\mu_n}\mathbf{f}_n\mathbf{(t, s)}\star^{\mu_{n-1}}\mathbf{f}_{n-1}\mathbf{(t,s)}\star^{\mu_{n-2}}\cdots\star^{\mu_1}\mathbf{f}_1\mathbf{(t,s)}\\
&& \star^{\beta_{q}}\mathbf{s}_{b_q}\star^{\beta_{q-1}}\mathbf{s}_{b_{q-1}}\star^{\beta_{q-2}}\cdots\star^{\beta_1}\mathbf{s}_{b_1}\\
&=&  \mathbf{s} \star^{\mu_n}\mathbf{f}_n\mathbf{(t, s)}\star^{\mu_{n-1}}\mathbf{f}_{n-1}\mathbf{(t,s)}\star^{\mu_{n-2}}\cdots\star^{\mu_1}\mathbf{f}_1\mathbf{(t,s)} \star^{\beta_{q}}\mathbf{s}_{b_q}\star^{\beta_{q-1}}\mathbf{s}_{b_{q-1}}\star^{\beta_{q-2}} \cdots\star^{\beta_1}\mathbf{s}_{b_1},\\
&& \textrm{by relation of type (2)}\\
&=&\mathbf{t}\star^{\beta_{q}}\mathbf{s}_{b_q}\star^{\beta_{q-1}}\mathbf{s}_{b_{q-1}}\star^{\beta_{q-2}}\cdots\star^{\beta_1}\mathbf{s}_{b_1}, \quad \textrm{by relation of type (3)}.
\end{eqnarray*}
This completes the proof of the theorem.
\end{proof}

\begin{corollary}\label{finitely gen dehn quandle}
If $G=\langle S \mid R \rangle$ is a finitely presented group such that the centraliser of each generator from $S$ is finitely generated, then the Dehn quandle $D(S^G)$ is finitely presented.
\end{corollary}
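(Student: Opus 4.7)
The plan is to invoke Theorem \ref{presentation-dehn-quandle} directly and verify that, under the hypotheses, every piece of data feeding into the presentation it produces is finite. Since $G$ is finitely presented, we may assume both $S$ and $R$ are finite. By hypothesis, for each $s \in S$ we may fix a finite generating set $A_s$ of $\C_G(s)$.

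The only remaining datum required by the theorem is a conjugating map $f : T \to G$ satisfying the three cocycle-like compatibility conditions. I would produce $f$ by partitioning $S$ into its $G$-conjugacy classes, fixing a representative $s_{0}$ in each class together with, for every $s$ in that class, a chosen $g_{s} \in G$ with $g_{s}\, s_{0}\, g_{s}^{-1} = s$ and $g_{s_{0}} = 1$. Setting $f(t,s) := g_{t}\, g_{s}^{-1}$ on each class then yields $f(t,s)\, s\, f(t,s)^{-1} = t$, $f(s,t) = f(t,s)^{-1}$ and $f(u,t)\, f(t,s) = f(u,s)$ by immediate calculation, so such an $f$ exists.

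With these choices made, each $f(t,s)$ is a specific element of $G$ and hence representable by a finite word in $S \cup S^{-1}$. Theorem \ref{presentation-dehn-quandle} then supplies a presentation of $\mathcal{D}(S^G)$ whose generating set is the finite set $S$ and whose defining relations are indexed by three sets: the pairs $(s,r) \in S \times R$ (type (1)), the pairs in $\bigsqcup_{s \in S} \{s\} \times A_{s}$ (type (2)), and the pairs $(s,t) \in T \subseteq S \times S$ (type (3)). All three index sets are finite since $S$, $R$, each $A_{s}$ and $T$ are finite, and each individual relation has finite length because $r$, $w$ and $f(t,s)$ are finite words. This gives a finite presentation of $\mathcal{D}(S^G)$, completing the proof. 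There is no genuine obstacle: all the real content is carried by Theorem \ref{presentation-dehn-quandle}, and the corollary is essentially a finiteness bookkeeping exercise on top of it.
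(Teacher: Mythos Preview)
Your argument is correct and is exactly the approach the paper takes: the corollary is stated as an immediate consequence of Theorem~\ref{presentation-dehn-quandle} with no separate proof given, so you have in fact supplied more detail than the paper itself (in particular the explicit construction of~$f$ via conjugacy-class representatives, which the paper leaves implicit).
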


It follows from \cite[Theorem 12]{MR2023190} that the centraliser of an element in a Garside group is finitely generated. Thus, the corollary holds for Garside groups, which includes spherical Artin groups.

\begin{remark}
One can reduce the number of relations in Theorem \ref{presentation-dehn-quandle} as follows:
\begin{enumerate}
\item Relations of type $(2)$ need to be checked only for those generators in $S$ that represent distinct conjugacy classes in $G$.
\item We write $S= \sqcup_{i} X_i$ as a disjoint union of sets where $X_i$ consists of elements that are conjugate to each other. We equip each $X_i$ with a partial order such that the corresponding poset graph is a tree. Then, for each $i$, relations of type $(3)$ need to be checked only for elements in $X_i \times X_i$ that are adjacent in the corresponding poset graph.
\end{enumerate}
\end{remark}
\medskip

\subsection{Examples of presentations of Dehn quandles}
As applications of Theorem \ref{presentation-dehn-quandle}, we give several examples of presentations of Dehn quandles. 

\begin{example}
Let $\Core{(\mathbb{Z}_n)}$ be the core quandle of the cyclic group of order $n$ (also called the dihedral quandle of order $n$). Then, the quandle operation in $\Core{(\mathbb{Z}_n)}$ is given by $x*y=2y-x \mod n$. It is shown in \cite[Proposition 3.11]{Dhanwani-Raundal-Singh-2021} that the core quandle of any group is  a Dehn quandle of some group. Let $\text{D}_n= \langle s_1, s_2 \mid s_1^2=1, s_2^2=1, (s_1s_2)^n=1 \rangle $ be the Coxeter presentation of the dihedral group of order $2n$, and set $S=\{s_1,s_2\}$. It follows from a direct computation (see also \cite[Lemma 6.3]{BardakovNasybullov2020}) that $\Core(\mathbb{Z}_n)\cong \dq(S^{\text{D}_n})$.
\medskip

First consider the case when $n$ is odd. In this case, $s_1$ and $s_2$ are conjugate, and $\C_{\text{D}_n}(s_i)=\langle s_i \rangle$.  Set $s_1<s_2$ and $f(s_2,s_1)={(s_1s_2)}^{\frac{n-1}{2}}$. Let $(s_1*s_2)_n$ denote the left associated product $s_1*s_2*s_1*s_2*\cdots$ of length $n$. Then Theorem~\ref{presentation-dehn-quandle} gives the following set of relations:
\begin{eqnarray}
s_i*s_j*s_j &= & s_i \quad \text{for}~i, j=1,2,\label{(2.2.1)}\\
s_i*\underbrace{s_2*s_1*s_2*s_1*\cdots *s_2*s_1}_{2n~ \textrm{terms}} &=& s_i  \quad \text{for}~ i=1,2,\label{(2.2.2)}\\
(s_1*s_2)_{n}&=&s_2 \label{(2.2.4)}. 
\end{eqnarray}
Note that \eqref{(2.2.1)} for $i=j$ are trivial relations. Since $n$ is odd, using  \eqref{(2.2.1)}, the relation \eqref{(2.2.4)} can be turned into the relation $(s_2*s_1)_n=s_1$. Now, using \eqref{(2.2.4)} and $(s_2*s_1)_n=s_1$, we can recover \eqref{(2.2.2)}. Thus, the final presentation is  
$$\Core(\mathbb{Z}_n) \cong \langle s_1,s_2~\mid~ s_2*s_1*s_1=s_2, \quad s_1*s_2*s_2=s_1, \quad (s_1*s_2)_{n}=s_2 \rangle.$$ 

Suppose now that $n$ is even. In this case, $s_1$ and $s_2$ lie in different conjugacy classes. Also, $\C_{\text{D}_n}(s_i)=\langle s_i,(s_1s_2)^{\frac{n}{2}}\rangle$. Again, Theorem~\ref{presentation-dehn-quandle} gives the following set of relations:
\begin{eqnarray}
s_i*s_j*s_j &= & s_i \quad \text{for}~i, j=1,2,\label{(2.2.5)}\\
s_i*\underbrace{s_2*s_1*s_2*s_1*\cdots *s_2*s_1}_{2n~ \textrm{{terms}}} &=& s_i  \quad \text{for}~ i=1,2,\label{(2.2.6)}\\
s_1*\underbrace{s_2*s_1*s_2*s_1*\cdots *s_2 * s_1}_{n~ \textrm{{terms}}} &=& s_1,\label{(2.2.7)}\\
s_2*\underbrace{s_2*s_1*s_2*s_1*\cdots *s_2 * s_1}_{n~ \textrm{{terms}}} &=& s_2\label{(2.2.8)}.
\end{eqnarray}
Again we ignore the trivial relations. Further, using \eqref{(2.2.7)} and \eqref{(2.2.8)}, we can recover relations in \eqref{(2.2.6)}. Hence, the final presentation is 
$$\Core(\mathbb{Z}_n)\cong \langle s_1,s_2~\mid~ s_2*s_1*s_1=s_2,\quad  s_1*s_2*s_2=s_1, \quad (s_1*s_2)_{n}=s_1, \quad (s_2*s_1)_{n}=s_2 \rangle.$$ 
\end{example}

\begin{remark}
If $n$ is even, it follows from the presentation of $Q(T(n,m))$ as given by Remark \ref{torus link quandle} and the presentation of $\Core(\mathbb{Z}_m)$ that the map  $x_i\to s_j$ for $i\equiv j \pmod 2$ defines a surjective quandle homomorphism from $Q(T(n,m))$ onto $\Core(\mathbb{Z}_m)$. Thus, for $n$ even, the torus link $T(n,m)$ is always $m$-colorable. This observation seems to be well-known. 
\end{remark}

\begin{example}
The fundamental group of a closed orientable surface $S_g$ of genus $g \ge 2$ has a presentation
$$\pi_1(S_g) =\langle a_1,b_1,a_2,b_2, \ldots,a_g,b_g~ \mid~[a_1,b_1][a_2,b_2]\cdots[a_{g},b_{g}]=1\rangle.$$ Setting $S=\{a_1,b_1,a_2,b_2, \ldots,a_g,b_g\}$, we note that no two elements of $S$ are conjugate to each other. Further, it is known that $\C_{\pi_1(S_g)}(a_i)=\langle a_i \rangle$ and $\C_{\pi_1(S_g)}(b_i)=\langle b_i \rangle$ \cite[Section 1.1.3]{Farb-Margalit2012}. Thus, by Theorem~\ref{presentation-dehn-quandle}, we get
\begin{eqnarray}\label{surface group dehn quandle}
\nonumber \dq(S^{\pi_1(S_g)}) &=& \Bigg\langle a_1,b_1,a_2,b_2, \ldots,a_g,b_g~|~a_i*b_{g}*a_{g}*^{-1}b_{g}*^{-1}a_{g}*\cdots*b_1*a_1*^{-1}b_1*^{-1}a_1=a_i,\\
&& b_i*b_{g}*a_{g}*^{-1}b_{g}*^{-1}a_{g}*\cdots*b_1*a_1*^{-1}b_1*^{-1}a_1=b_i \quad  \textrm{for all}~i \Bigg\rangle.
\end{eqnarray}
We note that the presentation of the enveloping group of $\dq(S^{\pi_1(S_g)})$ given in \cite[Theorem 3.18]{Dhanwani-Raundal-Singh-2021} can be recovered using \eqref{surface group dehn quandle} and
\cite[Theorem 5.1.7]{Winker1984}.
\end{example}
\medskip

\begin{example}
For $n \ge 3$, recall the Artin presentation of the braid group 
$$B_n=\langle \sigma_1,\sigma_2,\ldots,\sigma_{n-1}~ \mid \sigma_i\sigma_j=\sigma_j\sigma_i \text{ for } |i-j|\geq 2, \quad \sigma_i\sigma_{i+1}\sigma_{i}=\sigma_{i+1}\sigma_{i}\sigma_{i+1} \text{ for all } i \rangle.$$ Let us set $S=\{\sigma_1,\sigma_2,\ldots,\sigma_{n-1} \}$ and the total ordering $\sigma_1 <\sigma_2< \cdots <\sigma_{n-1}$ on $S$. Note that all the elements of $S$ are conjugate to each other. Choose $f(\sigma_{i+1},\sigma_i)=\sigma_i\sigma_{i+1}$ for each $1\leq i\leq n-2$. It follows from \cite[Theorem 4]{MR0747249} that $\C_{B_n}(\sigma_1)$ is generated by $X \cup Y$, where
\begin{eqnarray*}
X &=& \{ \sigma_1,\sigma_3, \sigma_4, \ldots,  \sigma_{n-1},\sigma_2\sigma_1\sigma_1\sigma_2 \}\\
\textrm{and} && \\
Y &=& \{ \sigma_1^2, (\sigma_2\sigma_1)(\sigma_3\sigma_2)\cdots(\sigma_{r+1}\sigma_r)(\sigma_r\sigma_{r+1})\cdots(\sigma_2\sigma_3)(\sigma_1\sigma_2) \quad \text{for} \quad 1\leq r\leq n-2 \}.
\end{eqnarray*}
It follows from the discussion after \cite[Theorem 4]{MR0747249} that generators from $Y$ can be written in terms of generators from $X$. In fact, setting
$x_r= (\sigma_2\sigma_1)(\sigma_3\sigma_2)\cdots(\sigma_{r+1}\sigma_r)(\sigma_r\sigma_{r+1})\cdots(\sigma_2\sigma_3)(\sigma_1\sigma_2),$ one can show that $x_r=x_{r-1}\sigma_{r+1}\sigma_{r}\cdots\sigma_{4}\sigma_{3}x_1\sigma_{3}^{-1}\sigma_{4}^{-1}\cdots\sigma_{r}^{-1}\sigma_{r+1}^{-1}$ for each $r\geq2$. Thus, we obtain 
\begin{equation}\label{centraliser-bn}
\C_{B_n}(\sigma_1)=\langle \sigma_1,\sigma_3, \sigma_4, \ldots,  \sigma_{n-1},\sigma_2\sigma_1\sigma_1\sigma_2 \rangle.
\end{equation}
\par

First we consider $n=3$ to clarify the general idea. We have $f(\sigma_{2},\sigma_1)=\sigma_1\sigma_{2}$ and $\C_{B_3}(\sigma_1)=\langle \sigma_1, \sigma_2\sigma_1\sigma_1\sigma_2\rangle$. Thus, by Theorem~\ref{presentation-dehn-quandle}, the defining relations in $\dq(S^{B_3})$ are given by
\begin{eqnarray}
\sigma_i*\sigma_1*\sigma_2*\sigma_1*^{-1}\sigma_2*^{-1}\sigma_1*^{-1}\sigma_2 &=&\sigma_i \quad \text{for}~ i=1,2,\label{(2.2.11)}\\
\sigma_1*\sigma_2*\sigma_1*\sigma_1*\sigma_2&=&\sigma_1,\label{(2.2.13)}\\
\sigma_1*\sigma_2*\sigma_1&=& \sigma_2 \label{(2.2.14)}. 
\end{eqnarray}
By using \eqref{(2.2.14)} in  \eqref{(2.2.13)}, we get
\begin{equation}\label{(2.2.15)}
\sigma_2*\sigma_1*\sigma_2=\sigma_1.
\end{equation}
It follows from relations \eqref{(2.2.14)}  and \eqref{(2.2.15)} that \eqref{(2.2.11)} is redundant. Thus, we obtain
$$\dq(S^{B_3})=\langle \sigma_1,\sigma_2~\mid~\sigma_1*\sigma_2*\sigma_1=\sigma_2, \quad \sigma_2*\sigma_1*\sigma_2=\sigma_1 \rangle.$$
\par

Now, assume that $n \ge 4$. In this case, defining relations for $\dq(S^{B_n})$ are as follows:
\begin{eqnarray}
\sigma_k*\sigma_i*\sigma_{i+1}*\sigma_i &= & \sigma_k \quad \text{for } 1\leq k\leq n-1~ \text{and }~ 1\leq i\leq n-2, \label{Type 1a}\\
\nonumber *^{-1}\sigma_{i+1}*^{-1}\sigma_i*^{-1}\sigma_{i+1} &&\\
\sigma_k*\sigma_i*\sigma_{j}*^{-1}\sigma_i*^{-1}\sigma_j &= & \sigma_k \quad \text{for } 1\leq k\leq n-1~ \text{and } ~ 3\leq i+2 \le j \leq n-1,\label{Type 1b}\\
\sigma_1*\sigma_j &= & \sigma_1 \quad \text{for }  3\leq j\leq n-1,\label{Type 2a}\\
\sigma_1*\sigma_2*\sigma_1*\sigma_1*\sigma_2 &= & \sigma_1, \label{Type 2b}\\
\sigma_i*\sigma_{i+1}*\sigma_i &= & \sigma_{i+1} \quad \text{for }  1\leq i\leq n-2\label{Type 3}.
\end{eqnarray}

Using relations in \eqref{Type 3}, we can recover relations in \eqref{Type 1a}, and the relation \eqref{Type 2b} can be rewritten as $\sigma_2*\sigma_1*\sigma_2 =  \sigma_1.$ Taking $k=i-1$ and using induction on $n$, we can rewrite relations in  \eqref{Type 1b} as $\sigma_i*\sigma_j=\sigma_i$ for $3\leq i+2 \le j \leq n-1$. The remaining relations in \eqref{Type 1b} can be recovered using these relations.  Thus, the presentation $\mathcal{D}(S^{B_n})$ is
\begin{eqnarray}\label{presentation of bn using thm2.1}
\mathcal{D}(S^{B_n}) &=& \Bigg\langle \sigma_1, \ldots, \sigma_{n-1} \mid \sigma_i*\sigma_{i+1}*\sigma_i =  \sigma_{i+1}  \quad \text{for } 1\leq i\leq n-2,\\ 
\nonumber &&  \sigma_2*\sigma_1*\sigma_2=\sigma_1,  \quad \sigma_i*\sigma_j=\sigma_i \quad \text{ for } 3 \le  i+2\leq j\leq n-1 \Bigg\rangle.
\end{eqnarray}
\end{example}

\begin{remark}
Using induction on $n$, one can show that the presentation for $\dq(S^{B_n})$ given by \eqref{presentation of artin using garside}  can also be reduced to the one given in \eqref{presentation of bn using thm2.1}.
\end{remark}
\medskip

\begin{example}\label{raag quandle}
Recall the presentation of an Artin group from \eqref{presentation of artin group}. We say that  $\mathcal{A}$ is a {\it right angled Artin group} if each $m_{ij}$ is either $2$ or $\infty$. Setting $S=\{x_1, x_2, \dots, x_n\}$ to be the Artin generating set of $\mathcal{A}$, we see that no two elements from $S$ are conjugate to each other. Further, $\C_{\mathcal{A}}(x_i)= \langle x_j \mid  m_{ij}=2\rangle$. Thus, by Theorem~\ref{presentation-dehn-quandle}, $\dq(S^{\mathcal{A}})$  has a presentation with generating set $S$ and defining relations as
\begin{eqnarray}
 x_k*x_i*x_j*^{-1} x_i *^{-1} x_j&=& x_k \quad \text{for}~1\leq k\leq n~ \text{and}~ m_{ij}=2, \label{(3.2.25)}\\
 x_i*x_j&=& x_i \quad \text{for}~ m_{ij}=2\label{(3.2.26)}. 
\end{eqnarray}
Clearly, relations in \eqref{(3.2.25)} can be recovered from  relations in \eqref{(3.2.26)}. Thus, the presentation is 
\begin{equation}\label{presentation raag quandle}
\dq(S^{\mathcal{A}})= \langle x_1, x_2, \dots, x_n ~\mid~ x_i*x_j=x_i \quad \text{whenever} \quad m_{ij}=2\rangle.
\end{equation}
\end{example}

In view of examples \ref{spherical artin} and \ref{raag quandle}, we propose the following.

\begin{conjecture}
If $\mathcal{A}=\langle S~|~ (st)_{m_{st}}=(ts)_{m_{ts}}\; \text{for}\; s,t\in S\; \text{with}\; s\neq t \rangle$ is an Artin group, then its Dehn quandle $\mathcal{D}(S^{\mathcal{A}})$ has a presentation
$$\mathcal{D}(S^{\mathcal{A}}) = \langle S \mid (s*t)_{m_{st}}=s~ \text{if}\; m_{st}~ \text{is even, and}~  (t*s)_{m_{st}}=s~ \text{if}~ m_{st}~ \text{is odd for all}~ s\neq t \rangle.$$
\end{conjecture}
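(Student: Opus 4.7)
My approach is to prove the conjecture via Theorem~\ref{presentation-dehn-quandle}, using the conjectural parabolic description of the centraliser of each standard generator in $\mathcal{A}$. The easy direction is to verify that each conjectured quandle relation is a consequence of a single Artin relation $(st)_{m_{st}}=(ts)_{m_{ts}}$. Unwinding the left-associated quandle product via $x*y=yxy^{-1}$, for $m$ even one computes $(s*t)_m = W\,s\,W^{-1}$, where $W = tsts\cdots t$ is the alternating word of length $m-1$ beginning and ending with $t$; hence $(s*t)_m=s$ is equivalent to $sW=Ws$, i.e., to $(st)_m=(ts)_m$. The case $m$ odd is handled symmetrically: $(t*s)_m = V\,t\,V^{-1}$ with $V=tsts\cdots s$ of length $m-1$, and $(t*s)_m=s$ again reduces to $(st)_m=(ts)_m$. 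This yields a canonical surjective quandle homomorphism $\phi\colon Q \twoheadrightarrow \mathcal{D}(S^{\mathcal{A}})$, where $Q$ is the quandle defined by the conjectured presentation.

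For the injectivity of $\phi$, I would apply Theorem~\ref{presentation-dehn-quandle} with the candidate generating set $A_s := \{s\}\cup\{t\in S : m_{st}=2\}$ for $\C_{\mathcal{A}}(s)$, and with conjugators $f(t,s)$ for conjugate pairs chosen to be positive dihedral words inside each parabolic $\langle s,t\rangle$ with $m_{st}$ odd (namely the alternating word $V$ above), extended multiplicatively along chains of odd-labelled edges in the Coxeter diagram. The relations produced by the theorem then fall into three families: the type-(1) relations, coming from the Artin braid relations, translate directly into the conjectured quandle relations; the type-(2) relations coming from $A_s$ reduce, modulo the trivial $s*s=s$, to $s*t=s$ for $m_{st}=2$, which is the $m=2$ instance of a conjectured relation; and the type-(3) conjugator relations would be shown to be redundant by Tietze reductions in the spirit of the braid-group derivation of~\eqref{presentation of bn using thm2.1}, invoking Lemma~\ref{lem-2-present-dehn-quandle-right-gaussian-group} repeatedly to pass conjugators through left-associated products.

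The principal obstacle is the assumption that $A_s$ actually generates $\C_{\mathcal{A}}(s)$, i.e., the parabolic-centraliser statement $\C_{\mathcal{A}}(s) = \langle s\rangle \times \mathcal{A}_{\mathrm{lk}(s)}$ with $\mathrm{lk}(s) = \{t : m_{st}=2\}$. This holds for right-angled Artin groups (Example~\ref{raag quandle}) and is known for several further classes; for spherical type (Example~\ref{spherical artin}) it needs a slight augmentation, as the braid-group centraliser $\C_{B_n}(\sigma_1)$ contains an extra generator such as $\sigma_2\sigma_1^2\sigma_2$, yet the resulting additional quandle relation simplifies to a conjectured one via the type-(3) reduction as carried out in the paragraph following~\eqref{presentation of bn using thm2.1}. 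The plan therefore yields a proof of the conjecture whenever one can either pin down a generating set for $\C_{\mathcal{A}}(s)$ whose extra relations so simplify, or bypass Theorem~\ref{presentation-dehn-quandle} altogether --- for instance, by constructing an explicit normal form for words in $Q$ and matching it, via induction on the structure of the Coxeter diagram, against the conjugacy classes of $S$ in $\mathcal{A}$.
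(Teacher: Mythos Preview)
The statement you are attempting to prove is labelled a \emph{Conjecture} in the paper and is not proved there; the paper merely proposes it on the strength of the spherical case (Example~\ref{spherical artin}, obtained via Garside theory and Corollary~\ref{cor-present-garside-quandle-r}) and the right-angled case (Example~\ref{raag quandle}, obtained via Theorem~\ref{presentation-dehn-quandle}). There is therefore no ``paper's own proof'' to compare against.

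Your proposal is a reasonable strategy, and you are candid about its central gap, but it is not a proof. The crux is your candidate $A_s = \{s\}\cup\{t\in S : m_{st}=2\}$: this already fails to generate $\C_{\mathcal{A}}(s)$ in the spherical case, as you note, and for general Artin groups the structure of $\C_{\mathcal{A}}(s)$ is an open problem---indeed, even the word problem for arbitrary Artin groups is open, so one cannot expect a uniform description of centralisers. Your fallback, that any extra centraliser generators produce quandle relations which are Tietze-redundant, is plausible in light of the $B_n$ calculation but is itself a nontrivial assertion that would have to be established case by case, and you offer no mechanism for doing so uniformly. The alternative route you sketch at the end (normal forms in $Q$ matched against conjugacy classes in $\mathcal{A}$) would again require control over the conjugacy problem for standard generators in a general Artin group, which is likewise not known. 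In short, both branches of your plan bottom out in open problems about Artin groups, which is precisely why the paper records this as a conjecture rather than a theorem.
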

\medskip

Next, we consider Dehn quandles of surfaces. Let $S_{g,p}$ be an orientable surface of genus $g$ with $p$ marked points. The {\it mapping class group} $\mathcal{M}_{g,p}$ of $S_{g,p}$ is defined as the set of isotopy classes of orientation preserving self-homeomorphisms of $S_{g,p}$ which permute the set of marked points. A simple closed curve $\alpha$ on $S_{g,p}$ is said to be \textit{non-separating} if $S_{g,p}\setminus \alpha$ is connected, and called \textit{separating} otherwise. Separating and non-separating simple closed arcs are defined analogously. Let $\dq_{g,p}^{ns}$ be the set of isotopy classes of all non-separating simple closed curves and simple closed arcs in $S_{g,p}$. For each isotopy class $y \in \dq_{g,p}^{ns}$ of a non-separating simple closed curve, let $T_y$  be the isotopy class of the right hand {\it Dehn twist} along any non-separating simple closed curve representing $y$. Similarly, if $y$ is the isotopy class of a non-separating simple closed arc, then we  define $H_y$ to be the isotopy class of the anti-clockwise {\it half twist} about the punctures joined by any arc representing $y$. This defines an injective map
$$ \tau: \dq_{g,p}^{ns} \hookrightarrow \mathcal{M}_{g,p}$$
by setting
$$
\tau(x)=\begin{cases}
T_x \text{ if $x$ is the isotopy class of a non-separating simple closed curve,}\\
H_x \text{ if $x$ is the isotopy class of a non-separating simple closed arc.}
\end{cases}
$$
For $x, y \in \dq_{g,p}^{ns}$, defining $x*y= \tau(y)(x)$ turns $\dq_{g,p}^{ns}$ into a quandle (see \cite{Dhanwani-Raundal-Singh-2021} for details), called the Dehn quandle of the surface $S_{g,p}$.  Note that the set of all Dehn twists along non-separating simple closed curves forms one conjugacy class in $\mcg_{g,p}$, whereas the set of all half twists along simple closed arcs forms another conjugacy class. For each $g,p \ge 0$, the group  $\mcg_{g,p}$ is generated by finitely many Dehn twists about non-separating simple closed curves and half twists about simple closed arcs \cite[Corollary 4.15]{Farb-Margalit2012}. Thus, if $S$ is such a generating set for $\mcg_{g,p}$, then  $\tau: \dq_{g,p}^{ns} \to \dq(S^{\mcg_{g,p}})$ becomes an isomorphism of quandles since
$$\tau(x * y)=\tau(\tau(y)(x))= \tau(y) \tau(x) \tau(y)^{-1}= \tau(x) * \tau(y).$$
Hence, $\dq_{g,p}^{ns}$ has the structure of the Dehn quandle of the group $\mcg_{g,p}$ with respect to $S$.
\par
The construction of Dehn quandle of $S_{g,p}$ for $p=0$ first appeared in the work of Zablow \cite{Zablow1999, Zablow2003}. Further, \cite{kamadamatsumoto,Yetter2003} considered the quandle structure on the set of isotopy classes of simple closed arcs in $S_{g,p}$ for $p\geq 2$, and called it {\it quandle of cords}. In general, the quandle of cords is a subquandle of $\mathcal{D}_{g,p}^{ns}$. In the case of a disk with $n$ marked points, this quandle can be identified with the Dehn quandle of the braid group $B_n$ with respect to its standard set of generators, that is, half twists along the cords. The reader may refer to \cite[Section 5]{Dhanwani-Raundal-Singh-2021} for generalities on Dehn quandles of surfaces, and \cite{Farb-Margalit2012} for basic facts about mapping class groups. 

\begin{example}
Recall from \cite[Section 5.1.3]{Farb-Margalit2012} that a presentation of the mapping class group $\mathcal{M}_1$ of the closed orientable surface of genus one (namely, the torus) is given by
$$\mathcal{M}_1=\langle T_{a}, T_{b}~|~T_{a}T_{b}T_{a}=T_{b}T_{a}T_{b}, \quad (T_{a}T_{b})^6=1\rangle,$$ where the curves $a,b$ are as shown in Figure \ref{Humphries-curves}(A). 

\begin{figure}[hbt!]
\begin{subfigure}{0.386\textwidth}
\centering
\begin{tikzpicture}[scale=0.5]
\begin{knot}[clip width=6, clip radius=4pt]
\strand[-] (0,4) to [out=left, in=left, looseness=2.3] (0,-2.9) to [out=right, in=right, looseness=2.3] (0,4);
\end{knot};
\begin{scope}
\clip (-1,0.5)rectangle(1,1);
\draw (0,0) circle [x radius=9mm, y radius=9mm];
\end{scope}
\begin{scope}[shift={(0,1.7)}]
\clip (-1.1,-0.8)rectangle(1.1,-1.5);
\draw (0,0) circle [x radius=14mm, y radius=14mm];
\end{scope}
\begin{scope}
\clip (0,0.5)rectangle(-0.6,-3.2);
\draw (0,-1.33) circle [x radius=4mm, y radius=16.1mm];
\end{scope}
\begin{scope}
\clip (0,0.5)rectangle(0.6,-3.2);
\draw[dashed] (0,-1.33) circle [x radius=4mm, y radius=16.1mm];
\end{scope}
\draw (0,0.6) circle [x radius=20mm, y radius=14mm];
\node at (-0.93,-1.7) {$a$}; \node at (0,2.5) {$b$};
\end{tikzpicture}
\caption{Genus one}
\label{genus-one}
\end{subfigure}
\begin{subfigure}{0.6\textwidth}
\centering
\begin{tikzpicture}[scale=0.5]
\begin{knot}[clip width=6, clip radius=4pt]
\strand[-] (0,4) to [out=left, in=left, looseness=2.3] (0,-2.9) to [out=right, in=left, looseness=1.3] (3,-2.3) to [out=right, in=left, looseness=1.3] (6,-2.9) to [out=right, in=right, looseness=2.3] (6,4) to [out=left, in=right, looseness=1.3] (3,3.4) to [out=left, in=right, looseness=1.3] (0,4);
\end{knot};

\begin{scope}
\clip (-1,0.5)rectangle(1,1);
\draw (0,0) circle [x radius=9mm, y radius=9mm];
\end{scope}
\begin{scope}[shift={(0,1.7)}]
\clip (-1.1,-0.8)rectangle(1.1,-1.5);
\draw (0,0) circle [x radius=14mm, y radius=14mm];
\end{scope}
\begin{scope}[shift={(6,0)}]
\clip (-1,0.5)rectangle(1,1);
\draw (0,0) circle [x radius=9mm, y radius=9mm];
\end{scope}
\begin{scope}[shift={(6,1.7)}]
\clip (-1.1,-0.8)rectangle(1.1,-1.5);
\draw (0,0) circle [x radius=14mm, y radius=14mm];
\end{scope}

\draw (0,0.6) circle [x radius=20mm, y radius=14mm];
\draw (6,0.6) circle [x radius=20mm, y radius=14mm];

\begin{scope}
\clip (0,0.5)rectangle(-0.6,-3.2);
\draw (0,-1.33) circle [x radius=4mm, y radius=16.1mm];
\end{scope}
\begin{scope}
\clip (0,0.5)rectangle(0.6,-3.2);
\draw[dashed] (0,-1.33) circle [x radius=4mm, y radius=16.1mm];
\end{scope}
\begin{scope}[shift={(6,0)}]
\clip (0,0.5)rectangle(-0.6,-3.2);
\draw (0,-1.33) circle [x radius=4mm, y radius=16.1mm];
\end{scope}
\begin{scope}[shift={(6,0)}]
\clip (0,0.5)rectangle(0.6,-3.2);
\draw[dashed] (0,-1.33) circle [x radius=4mm, y radius=16.1mm];
\end{scope}

\begin{scope}
\clip (0.7,0)rectangle(5.3,0.5);
\draw[dashed] (3,0.5) circle [x radius=23mm, y radius=3mm];
\end{scope}
\begin{scope}
\clip (0.7,0.5)rectangle(5.3,1);
\draw (3,0.5) circle [x radius=23mm, y radius=3mm];
\end{scope}

\node at (-0.93,-1.7) {$a_1$}; \node at (0,2.5) {$b_1$}; \node at (3,1.27) {$c_1$}; \node at (6,2.5) {$b_2$}; \node at (5.02,-1.7) {$a_2$};
\end{tikzpicture}
\caption{Genus two}
\label{genus-two}
\end{subfigure}
\caption{Humphries curves on a surface}
\label{Humphries-curves}
\end{figure}
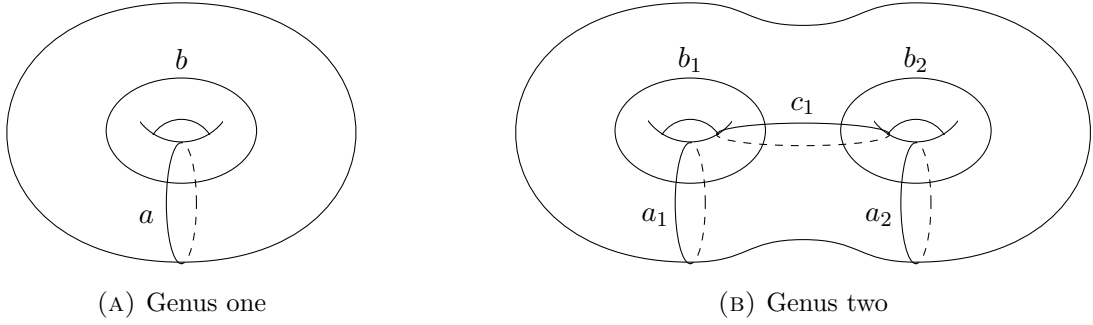

Let us set $S=\{T_{a}, T_{b} \}$ and $T_{a} < T_{b}$. Note that $T_{a}$ and $T_{b}$ are conjugates of each other and  we can take $f(T_{b}, T_{a})=T_{a}T_{b}$. Note that $\Psi:\mathcal{M}_{1} \to \text{SL}(2, \mathbb{Z})$ given by 
$$\Psi(T_{a})=
\begin{bmatrix}
1 & 1\\
0 & 1
\end{bmatrix}
\quad \textrm{and } \quad
\Psi(T_{b})=
\begin{bmatrix}
1 & 0\\
-1 & 1
\end{bmatrix}$$ is an isomorphism of groups. An elementary check gives $\C_{\text{SL}(2,\mathbb{Z})}(\Psi(T_a))=\langle \Psi(T_a),-\text{Id}\rangle$ and $-\text{Id}=(\Psi(T_a)\Psi(T_b))^3$. Thus, we have  $\C_{\mathcal{M}_{1}}(T_{a})=\langle T_{a}, T_{b}T_{a}T_{a}T_{b} \rangle$. By Theorem \ref{presentation-dehn-quandle}, a presentation of $\mathcal{D}_1$ has $S$ as its generating set and defining relations as follows:
\begin{eqnarray}
 T_{a}*T_{a}*T_{b}*T_{a}*^{-1}T_b *^{-1}T_a *^{-1}T_b & = & T_{a}, \label{m1e1} \\
  T_{b}*T_{a}*T_{b}*T_{a}*^{-1}T_b *^{-1}T_a *^{-1}T_b & = & T_{b},  \label{m1e2}\\
T_{a}*T_{b}*T_{a}*T_{b}*T_{a}*T_{b}*T_{a}*T_{b}*T_{a}*T_{b}*T_{a}*T_{b}*T_{a} &= &T_{a}, \label{m1e3}\\
T_{b}*T_{b}*T_{a}*T_{b}*T_{a}*T_{b}*T_{a}*T_{b}*T_{a}*T_{b}*T_{a}*T_{b}*T_{a} &= &T_{b},\label{m1e4}\\
T_{a}*T_{b}*T_{a}*T_{a}*T_{b}& = & T_{a}, \label{m1e5}\\
T_{a}*T_{b}*T_{a} & = & T_{b}. \label{m1e6}
\end{eqnarray}
Using the relation \eqref{m1e6}, we can write the relation~\eqref{m1e5} as $T_{b}*T_{a}*T_{b} =  T_{a}$. Now, using this new relation together with \eqref{m1e6}, one can recover relations \eqref{m1e1} through \eqref{m1e4}. Thus, we obtain the presentation
\begin{equation}\label{presentation of genus one quandle}
\mathcal{D}_1^{ns} \cong \langle T_{a}, T_{b}~\mid~ T_{a}*T_{b}*T_{a}=T_{b},  \quad T_{b}*T_{a}*T_{b}=T_{a} \rangle.
\end{equation}
\end{example}

\begin{remark}
Notice that, the right hand side of \eqref{presentation of genus one quandle} is the presentation of the knot quandle of the trefoil. This recovers the main result of \cite[Theorem 3.1]{NiebrzydowskiPrzytycki2009}.
\end{remark}

\begin{example}
Next, we give a presentation of the Dehn quandle $\dq_{2}^{ns}$ of the closed orientable surface $S_2$ of genus two. To simplify the notation, we set $\alpha_1:=T_{a_1}$, $\alpha_2:=T_{b_1}$, $\alpha_3:=T_{c_1}$, $\alpha_4:=T_{b_2}$ and $\alpha_5:=T_{a_2}$, where the curves are as in Figure~\ref{Humphries-curves}(B).  Then, by \cite[Section 4]{MargalitWinarski}, a presentation of the mapping class group $\mcg_2$ of $S_2$ is
\begin{eqnarray}\label{presentation of mcg_2}
\nonumber \mcg_2 &=& \Bigg\langle \alpha_1,\alpha_2,\alpha_3,\alpha_4,\alpha_5 \mid [\alpha_i,\alpha_j]=1 \text{ for } |i-j|\geq2, \quad \alpha_i\alpha_{i+1}\alpha_i=\alpha_{i+1}\alpha_i\alpha_{i+1} \text{ for } 1 \le i \le 4,\\
 && (\alpha_1\alpha_2\alpha_3\alpha_4\alpha_5)^6=1,\quad (\alpha_1\alpha_2\alpha_3\alpha_4\alpha_5\alpha_5\alpha_4\alpha_3\alpha_2\alpha_1)^2=1,\\
\nonumber  && [\alpha_1, (\alpha_1\alpha_2\alpha_3\alpha_4\alpha_5\alpha_5\alpha_4\alpha_3\alpha_2\alpha_1)]=1 \Bigg\rangle.
\end{eqnarray}

Let $S=\{\alpha_1,\alpha_2,\alpha_3,\alpha_4,\alpha_5\}$ and consider the total order $\alpha_1<\alpha_2<\alpha_3<\alpha_4<\alpha_5$ on $S$. Note that all Dehn twists along non-separating simple closed curves are conjugate in $\mcg_2$. We choose $f$ such that $f(\alpha_2,\alpha_1)=\alpha_1\alpha_2$, $f(\alpha_3,\alpha_2)=\alpha_2\alpha_3$, $f(\alpha_4,\alpha_3)=\alpha_3\alpha_4$, and $f(\alpha_5,\alpha_4)=\alpha_4\alpha_5$. Using \cite[Theorem 1]{MR1851559} we get a generating set for the mapping class group of a genus one surface with two boundary components. Then embedding this surface into a closed surface of genus two and using \cite[Remark 1.19]{Putman} gives
$$\C_{\mathcal{M}_{2}}(\alpha_1)=\langle \iota, \alpha_1,\alpha_3,\alpha_4,\alpha_5 \rangle,$$
where $\iota$ is the hyperelliptic involution. Note that $\iota=\alpha_1\alpha_2\alpha_3\alpha_4\alpha_5\alpha_5\alpha_4\alpha_3\alpha_2\alpha_1=\alpha_5\alpha_4\alpha_3\alpha_2\alpha_1\alpha_1\alpha_2\alpha_3\alpha_4\alpha_5$ (see \cite[Section 4]{MargalitWinarski}). Thus, by Theorem~\ref{presentation-dehn-quandle},  $\dq_{2}^{ns}$ is generated by $S$ and has defining relations as follows. Relations of type (1) are 
\begin{eqnarray}
 \alpha_i*\alpha_j*\alpha_{j+1}*\alpha_j*^{-1}\alpha_{j+1}*^{-1}\alpha_j*^{-1}\alpha_{j+1} &=& \alpha_i \quad \text{for}~ 1\leq i\leq 5 \label{m2e1}\\\
\nonumber  && \quad \quad  \text{and}~1\leq j\leq 4, \\
\alpha_i*\alpha_1*\alpha_j*^{-1}\alpha_1*^{-1}\alpha_j &=& \alpha_i \quad \text{for}~ 1\leq i\leq 5 \label{m2e5}\\
\nonumber &&\quad \quad \textrm{and}~ j=3, 4, 5,\\
\alpha_i*\alpha_2*\alpha_j*^{-1}\alpha_2*^{-1}\alpha_j &=& \alpha_i \quad \text{for}~ 1\leq i\leq 5, \label{m2e8}\\
\nonumber &&\quad \quad \textrm{and}~ j=4, 5,\\
\alpha_i*\alpha_3*\alpha_5*^{-1}\alpha_3*^{-1}\alpha_5 &=& \alpha_i \quad \text{for}~ 1\leq i\leq 5, \label{m2e10}\\
\alpha_i *\underbrace{\alpha_5*\alpha_4*\alpha_3*\alpha_2*\alpha_1* \cdots * \alpha_5*\alpha_4*\alpha_3*\alpha_2*\alpha_1}_{6 \text{ times}} &=& \alpha_i \quad \text{for}~1\leq i\leq 5, \label{m2e11}\\
\alpha_i*\alpha_1*\alpha_2*\alpha_3*\alpha_4*\alpha_5 *\alpha_5 *\alpha_4*\alpha_3*\alpha_2 &=& \alpha_i \quad \text{for}~ 1\leq i\leq 5,  \quad \label{m2e12}\\
\nonumber  *\alpha_1*\alpha_1*\alpha_2*\alpha_3*\alpha_4*\alpha_5*\alpha_5*\alpha_4*\alpha_3*\alpha_2*\alpha_1  &&\\
\alpha_i*\alpha_1*\alpha_2*\alpha_3*\alpha_4*\alpha_5*\alpha_5*\alpha_4*\alpha_3*\alpha_2*\alpha_1*\alpha_1   &=& \alpha_i \quad \textrm{for}~   1\leq i\leq 5. \label{m2e13}\\
 *^{-1}\alpha_1*^{-1}\alpha_2*^{-1}\alpha_3*^{-1}\alpha_4*^{-1}\alpha_5*^{-1 }\alpha_5*^{-1}\alpha_4*^{-1}\alpha_3*^{-1}\alpha_2*^{-1}\alpha_1*^{-1}\alpha_1
\nonumber &&
\end{eqnarray}
Relations of type (2) are
\begin{eqnarray}
\alpha_1*\alpha_5*\alpha_4*\alpha_3*\alpha_2*\alpha_1*\alpha_1*\alpha_2*\alpha_3*\alpha_4*\alpha_5 &=& \alpha_1, \label{m2e14}\\
\alpha_1*\alpha_i &=& \alpha_1 \quad \text{for}~ i=3, 4, 5,\label{m2e15}
\end{eqnarray}
and relations of type (3) are
\begin{eqnarray}
\alpha_i*\alpha_{i+1}*\alpha_i &=& \alpha_{i+1} \quad \text{for}~1\leq i\leq 4. \label{m2e18}
\end{eqnarray}

Note that relations \eqref{m2e1} can be recovered using relations \eqref{m2e18}.   Along similar lines, we can recover relations \eqref{m2e5} using relations \eqref{m2e15}. We can rewrite the relation~\eqref{m2e14} as $\alpha_2*\alpha_1*\alpha_2=\alpha_1$ using relations \eqref{m2e15}. Similarly, we can rewrite relations~\eqref{m2e8} and~\eqref{m2e10} as $\alpha_2*\alpha_4=\alpha_2, \alpha_2*\alpha_5=\alpha_2$ and $\alpha_3*\alpha_5=\alpha_3$, respectively, using relations \eqref{m2e15} and \eqref{m2e18}. One can see that relations ~\eqref{m2e11} can  be recovered using relations \eqref{m2e14}, \eqref{m2e15}, \eqref{m2e18} and relations $\alpha_2*\alpha_4=\alpha_2, \alpha_2*\alpha_5=\alpha_2$ and $\alpha_3*\alpha_5=\alpha_3$. For $i=2$, the relation \eqref{m2e13} can be rewritten as $\alpha_1*\alpha_2*\alpha_3*\alpha_4*\alpha_5*\alpha_5*\alpha_4*\alpha_3*\alpha_2=\alpha_1$, which further gives relations \eqref{m2e12}. Thus, $\dq_{2}^{ns}$ has a presentation
\begin{eqnarray}\label{presentation of d2ns}
\nonumber \dq_{2}^{ns} &=& \Bigg\langle \alpha_1, \alpha_2, \alpha_3,\alpha_4, \alpha_5~\mid~\quad \alpha_1*\alpha_3=\alpha_1, \quad \alpha_1*\alpha_4=\alpha_1, \quad \alpha_1*\alpha_5=\alpha_1,\\
 &&   \alpha_2*\alpha_4=\alpha_2, \quad \alpha_2*\alpha_5=\alpha_2,\quad  \alpha_3*\alpha_5=\alpha_3,  \quad \alpha_1*\alpha_2*\alpha_1=\alpha_2,\\
\nonumber && \alpha_2*\alpha_1*\alpha_2=\alpha_1, \quad \alpha_2*\alpha_3*\alpha_2=\alpha_3, \quad \alpha_3*\alpha_4*\alpha_3=\alpha_4, \quad \alpha_4*\alpha_5*\alpha_4=\alpha_5, \\
\nonumber && \alpha_1*\alpha_2*\alpha_3*\alpha_4*\alpha_5*\alpha_5*\alpha_4*\alpha_3*\alpha_2=\alpha_1 \Bigg\rangle.
\end{eqnarray}
\end{example}
\medskip

\subsection{An alternate approach to presentation of $\dq_{2}^{ns}$}
We conclude by giving an alternate proof of the presentation of $\dq_{2}^{ns}$ by identifying it with $\dq_{0,6}^{ns}$. It follows from \cite[Section 5.1.3]{Farb-Margalit2012} that a presentation of the mapping class group of $S_{0,6}$ is 
\begin{eqnarray*}
\mcg_{0,6} &=& \Bigg\langle \sigma_1,\sigma_2,\sigma_3,\sigma_4,\sigma_5 \mid \sigma_i\sigma_j=\sigma_j\sigma_i~ \text{for}~|i-j|\ge 2,\quad \sigma_i\sigma_{i+1}\sigma_i=\sigma_{i+1}\sigma_i\sigma_{i+1}~ \text{for all}~i, \\
&& \ (\sigma_1\sigma_2\sigma_3 \sigma_4\sigma_5)^6=1, \quad \sigma_1\sigma_2\sigma_3\sigma_4\sigma_5\sigma_5\sigma_4\sigma_3 \sigma_2\sigma_1=1 \Bigg\rangle.
\end{eqnarray*}
Recall that, the enveloping group $\Env(X)$ of a quandle $(X, *)$ is defined as
$$\Env(X)= \langle e_x, ~x \in X \mid e_{x*y}= e_y e_x e_y^{-1}, ~x, y \in X \rangle.$$
The natural map $\eta: X \to \Env(X)$ given by $\eta(x)=e_x$ is a homomorphism of quandles when we view $\Env(X)$ with the conjugation quandle structure. A presentation of the Dehn quandle $\dq_{0,6}^{ns}$ of $S_{0,6}$ is given in \cite[Theorem 3.2] {kamadamatsumoto} as
\begin{eqnarray}\label{presentation d06}
\nonumber \dq_{0,6}^{ns} &=& \Bigg\langle \sigma_1, \sigma_2, \sigma_3, \sigma_4, \sigma_5 \mid \sigma_i* \sigma_j= \sigma_i~\textrm{for}~ |i-j| \ge 2,\quad \sigma_i* \sigma_j * \sigma_i= \sigma_j~\textrm{for}~ |i-j| = 1,\\
&& \sigma_1*\sigma_2*\sigma_3*\sigma_4*\sigma_5= \sigma_5*\sigma_4*\sigma_3*\sigma_2*\sigma_1 \Bigg\rangle. 
\end{eqnarray}
Further, a presentation of the enveloping group of $\dq_{0,6}^{ns}$ follows from \cite[Theorem 3.7] {kamadamatsumoto} (also from \cite[Theorem 5.1.7]{Winker1984}) and given as
\begin{eqnarray}\label{presentation Endd06}
\Env(\dq_{0,6}^{ns}) &=& \Bigg\langle e_{\sigma_1},e_{\sigma_2},e_{\sigma_3},e_{\sigma_4},e_{\sigma_5} \mid e_{\sigma_i}e_{\sigma_j}=e_{\sigma_j}e_{\sigma_i} \text{ for } |i-j|\ge 2,\\
\nonumber && e_{\sigma_i}e_{\sigma_{i+1}}e_{\sigma_i}=e_{\sigma_{i+1}}e_{\sigma_i}e_{\sigma_{i+1}}~\textrm{for all}~ i,\\
\nonumber && [e_{\sigma_j},e_{\sigma_1}e_{\sigma_2} e_{\sigma_3}e_{\sigma_4} e_{\sigma_5}e_{\sigma_5}e_{\sigma_4}e_{\sigma_3} e_{\sigma_2}e_{\sigma_1}]=1 \text{ for all }j \Bigg\rangle.
\end{eqnarray}
Note that $\dq_{0,6}^{ns}$ is referred as  the quandle of cords of $S_{0,6}$ in \cite{kamadamatsumoto}.  It is easy to see that commutativity of $e_{\sigma_1}e_{\sigma_2} e_{\sigma_3}e_{\sigma_4} e_{\sigma_5}e_{\sigma_5}e_{\sigma_4}e_{\sigma_3} e_{\sigma_2}e_{\sigma_1}$ with $e_{\sigma_j}$ for $2\leq j\leq 5$ follows from braid relations, and hence the last relation in \eqref{presentation Endd06} is needed only for $j=1$. The following theorem together with \eqref{presentation d06} gives a presentation of $\dq_2^{ns}$.

\begin{proposition}
$\dq_2^{ns} \cong \dq_{0,6}^{ns}$.
\end{proposition}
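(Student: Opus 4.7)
The plan is to construct the quandle isomorphism $\Phi: \dq_2^{ns} \to \dq_{0,6}^{ns}$ via the hyperelliptic double cover. Let $\iota \in \mcg_2$ denote the hyperelliptic involution, whose six fixed points on $S_2$ are the Weierstrass points. The quotient $S_2 / \langle \iota \rangle$ is naturally identified with $S_{0,6}$, with the six marked points corresponding to the fixed points of $\iota$, and the branched cover $\pi: S_2 \to S_{0,6}$ induces the Birman--Hilden short exact sequence
$$1 \longrightarrow \langle \iota \rangle \longrightarrow \mcg_2 \xrightarrow{\bar{\pi}} \mcg_{0,6} \longrightarrow 1.$$
Under $\bar{\pi}$, the Dehn twist $T_a$ about an $\iota$-symmetric non-separating SCC $a \subset S_2$ is sent to the half twist $H_{\pi(a)}$ about the arc $\pi(a) \subset S_{0,6}$, and in particular $\alpha_i \mapsto \sigma_i$ under our identifications of generators.

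First I would check that $\iota$ acts trivially on $\dq_2^{ns}$: since $\iota$ generates the center of $\mcg_2$, we have $T_{\iota(a)} = \iota T_a \iota^{-1} = T_a$ for every non-separating SCC $a$, forcing $a$ and $\iota(a)$ to be isotopic. Consequently every element of $\dq_2^{ns}$ admits an $\iota$-symmetric representative, and its image under $\pi$ is a well-defined isotopy class of a non-separating arc joining two marked points of $S_{0,6}$; this defines $\Phi$. To see that $\Phi$ is a quandle homomorphism, I would apply $\bar{\pi}$ to the equality $T_{T_b(a)} = T_b T_a T_b^{-1}$ to obtain $H_{\pi(T_b(a))} = H_{\pi(b)} H_{\pi(a)} H_{\pi(b)}^{-1}$, which forces $\pi(T_b(a)) = H_{\pi(b)}(\pi(a))$ as isotopy classes and hence $\Phi([a] * [b]) = \Phi([a]) * \Phi([b])$. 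For surjectivity, every non-separating arc $\gamma \subset S_{0,6}$ between two marked points lifts via the branched cover to a single simple closed curve $\tilde{\gamma} \subset S_2$, which is non-separating because a separating $\tilde{\gamma}$ would yield either a separating image arc in $S_{0,6}$ or, via the $\iota$-swap of components, a disconnection of $S_{0,6}$ itself. For injectivity, isotopies of arcs on $S_{0,6}$ lift to $\iota$-equivariant isotopies of symmetric SCCs on $S_2$, so distinct classes in $\dq_2^{ns}$ map to distinct classes in $\dq_{0,6}^{ns}$.

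The main obstacle lies in making the topological correspondence precise, especially in choosing $\iota$-symmetric representatives for arbitrary non-separating SCCs and justifying that non-separation is preserved in both directions of the lifting/projection correspondence. An alternate, purely algebraic route is available using the presentations derived above: one identifies generators by $\alpha_i \leftrightarrow \sigma_i$ and checks that the braid and commuting relations match on both sides, reducing the problem to showing that the long relation $\alpha_1 * \alpha_2 * \alpha_3 * \alpha_4 * \alpha_5 * \alpha_5 * \alpha_4 * \alpha_3 * \alpha_2 = \alpha_1$ in $\dq_2^{ns}$ is equivalent, modulo the braid and commuting relations, to the palindromic relation $\sigma_1 * \sigma_2 * \sigma_3 * \sigma_4 * \sigma_5 = \sigma_5 * \sigma_4 * \sigma_3 * \sigma_2 * \sigma_1$ in $\dq_{0,6}^{ns}$; this reduction can be carried out using Lemma~\ref{lem-2-present-dehn-quandle-right-gaussian-group} together with the quandle identity $\sigma_i * \sigma_j * \sigma_i = \sigma_j$ for $|i-j|=1$.
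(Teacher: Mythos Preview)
Your primary approach is correct and is genuinely different from the paper's. You construct the isomorphism geometrically via the Birman--Hilden double cover $\pi:S_2\to S_{0,6}$, matching symmetric non-separating simple closed curves with arcs and invoking the lifting of isotopies for injectivity. The paper instead argues purely algebraically through the enveloping group: the tautological surjection $\Phi:\Env(\dq_{0,6}^{ns})\to\mcg_{0,6}$ induces an isomorphism $\Phi'$ of Dehn quandles (because $\eta'\Phi'$ and $\Phi'\eta'$ are identities on generators), and comparing the presentation \eqref{presentation Endd06} with that of $\mcg_2$ shows $\Phi$ factors as $\Env(\dq_{0,6}^{ns})\twoheadrightarrow\mcg_2\twoheadrightarrow\mcg_{0,6}$; since the composite $\Phi'=\Phi_2'\Phi_1'$ of the induced quandle surjections is an isomorphism, each factor is forced to be one. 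Your route is more illuminating topologically and explains the origin of the correspondence, but it requires care in the step where an arbitrary non-separating curve is replaced by a genuinely $\iota$-invariant representative (not merely one isotopic to its $\iota$-image) and in verifying that lifts of arcs are non-separating; the paper's route sidesteps all of this by never touching curves or arcs, at the cost of relying on the already-computed presentations of $\Env(\dq_{0,6}^{ns})$ and $\mcg_2$. Your alternate presentation-matching sketch is also valid and is essentially what the paper records in the remark immediately following its proof.
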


\begin{proof}
The map $\eta: \dq_{0,6}^{ns}\to \Env(\dq_{0,6}^{ns})$, given by $\eta(x)=e_x$, induces a quandle homomorphism $\eta': \dq_{0,6}^{ns}\to \dq(\eta(S)^{\Env(\dq_{0,6}^{ns})})$, where $S$ is the generating set of $\dq_{0,6}^{ns}$ as in \eqref{presentation d06}. By \cite[Section 3.2]{Dhanwani-Raundal-Singh-2021}, the map $$\Phi: \Env(\dq_{0,6}^{ns}) \to \mcg_{0,6}$$ given by $\Phi(e_x)=x$ is a surjective group homomorphism,  and hence induces a surjective quandle homomorphism $\Phi' : \dq(\eta(S)^{\Env(\dq_{0,6}^{ns})}) \to \dq_{0,6}^{ns}$. Since $\eta' \Phi'$ and $\Phi' \eta'$ are both identity maps, $\Phi'$ is a quandle isomorphism. Now, recall the presentation of $\mcg_2$ from \eqref{presentation of mcg_2}. It follows that we can factorise  $\Phi=\Phi_2\Phi_1$, where
$\Phi_1 : \Env(\dq_{0,6}^{ns}) \to \mcg_2$ and $\Phi_2 : \mcg_2 \to \mcg_{0,6}$ are surjective group homomorphisms. These maps induce surjective quandle homomorphisms $\Phi_1' : \dq(\eta(S)^{\Env(\dq_{0,6}^{ns})})\to \dq_2^{ns}$ and $\Phi_2' : \dq_2^{ns}\to \dq_{0,6}^{ns}$ such that $\Phi'=\Phi_2'\Phi_1'$. Since $\Phi'$ is an isomorphism, it follows that both $\Phi_2', \Phi_1'$ are isomorphisms, and hence $\dq_2^{ns} \cong \dq_{0,6}^{ns}$.
\end{proof}

\begin{remark}
Note that, the additional braid and far commutativity induced relations in \eqref{presentation d06} can be recovered using relations from \eqref{presentation of d2ns}. 
Further,  the long relation $\alpha_1*\alpha_2*\alpha_3*\alpha_4*\alpha_5*\alpha_5*\alpha_4*\alpha_3*\alpha_2=\alpha_1$ in \eqref{presentation of d2ns} is equivalent to the long relation 
$\sigma_1*\sigma_2*\sigma_3*\sigma_4*\sigma_5= \sigma_5*\sigma_4*\sigma_3*\sigma_2*\sigma_1$ in \eqref{presentation d06}. 
\end{remark}
\medskip

\begin{ack}
Neeraj K. Dhanwani thanks IISER Mohali for the institute post doctoral fellowship. Hitesh Raundal acknowledges financial support from the SERB grant SB/ SJF/2019-20. Mahender Singh is supported by Swarna Jayanti Fellowship grants DST/SJF/MSA-02/2018-19 and SB/SJF/2019-20.
\end{ack}


\begin{thebibliography}{HD}


\bibitem{TAkita} T. Akita, \textit{The adjoint group of a Coxeter quandle}, Kyoto J. Math. 60 (2020), 1245--1260.

\bibitem{Andruskiewitsch2003} N. Andruskiewitsch and M. Gra\~{n}a, \textit{From racks to pointed Hopf algebras}, Adv. Math. 178 (2003), no. 2, 177--243.

\bibitem{BardakovNasybullov2020} V. Bardakov and T. Nasybullov, \textit{Embeddings of quandles into groups}, J. Algebra Appl. 19 (2020), no. 7, 2050136, 20 pp.

\bibitem{Birman-Gebhardt-Meneses-2007} J. Birman, V. Gebhardt and J. Gonz\'alez-Meneses, \textit{Conjugacy in Garside groups I: cyclings, powers and rigidity}, Groups Geom. Dyn. 1 (2007), 221--279.

\bibitem{Brieskorn-Saito-1972} E. Brieskorn and K. Saito, \textit{Artin-Gruppen und Coxeter-Gruppen}, Invent. Math. 17 (1972), 245--271.

\bibitem{ChamanaraHuZablow} R. Chamanara, J. Hu and J. Zablow, \textit{Extending the Dehn quandle to shears and foliations on the torus}, Fund. Math. 225 (2014), no. 1, 1--22.

\bibitem{Clifford-Preston-1961} A. H. Clifford and G. B. Preston, \textit{The algebraic theory of semigroups}, Math. Surveys No. 7 (Part I), Amer. Math. Soc., Providence RI, 1961.

\bibitem{Crisp-Paris-2005} J. Crisp and L. Paris, \textit{Representations of the braid group by automorphisms of groups, invariants of links, and Garside groups}, Pacific J. Math. 221 (2005), 1--27.

\bibitem{Dehornoy2002} P. Dehornoy, \textit{Groupes de Garside}, Ann. Sci. \'Scole Norm. Sup. (4) 35 (2) (2002), 267--306.

\bibitem{Dehornoy-Paris-1999} P. Dehornoy and L. Paris, \textit{Gaussian groups and Garside groups, two generalisations of Artin groups}, Proc. London Math. Soc. 79 (3) (1999), 569--604.

\bibitem{Dhanwani-Raundal-Singh-2021} N. K. Dhanwani, H. Raundal and M. Singh, \textit{Dehn quandle of groups and orientable surfaces}, arXiv:2106.00290v2.

\bibitem{Eisermann2005} M. Eisermann, \textit{Yang-Baxter deformations of quandles and racks}, Algebr. Geom. Topol. 5 (2005), 537--562.

\bibitem{Farb-Margalit2012} B. Farb and D. Margalit, \textit{A primer on mapping class groups}, Princeton Mathematical Series, 49. Princeton University Press, Princeton, NJ, 2012. xiv+472 pp.


\bibitem{MR1194995} R. Fenn and C. Rourke, \textit{Racks and links in codimension two}, J. Knot Theory Ramifications 1 (1992), no. 4, 343--406.


\bibitem{Franco-Meneses-2003} N. Franco and J.  Gonz\'alez-Meneses, \textit{Conjugacy problem for braid groups and Garside groups}, J. Algebra 266 (2003), 112--132.

\bibitem{MR2023190} N. Franco and J. Gonz\'alez-Meneses, \textit{Computation of centralizers in braid groups and Garside groups}, Proceedings of the International Conference on Algebraic Geometry and Singularities (Spanish) (Sevilla, 2001). Rev. Mat. Iberoamericana 19 (2003), no. 2, 367--384.

\bibitem{Garside1969} F. A. Garside, \textit{The braid group and other groups}, Quart. J. Math. Oxford 20 (1969), 235--254.

\bibitem{Gebhardt-2005} V. Gebhardt, \textit{A new approach to the conjugacy problem in Garside groups}, J. Algebra 292 (2005), 282--302.

\bibitem{MR1851559} S. Gervais, \textit{A finite presentation of the mapping class group of a punctured surface}. Topology 40 (2001), no. 4, 703--725.

\bibitem{MR0747249} G. G. Gurzo, \textit{Systems of generators for the normalizers of certain elements of the braid group}, Math. USSR Izv. 24 (1985), 439--478.

\bibitem{MR3226796} A. Inoue and Y. Kabaya, \textit{Quandle homology and complex volume}, Geom. Dedicata 171 (2014), 265--292.

\bibitem{Joyce1979} D. Joyce, \textit{An algebraic approach to symmetry with applications to knot theory}, Ph.D. Thesis, University of Pennsylvania, 1979, 127 pp.

\bibitem{Joyce1982} D. Joyce, \textit{A classifying invariant of knots, the knot quandle}, J. Pure Appl. Algebra 23 (1982), 37--65.

\bibitem{Kamada1996} S. Kamada, \textit{An observation of surface braids via chart description}, J. Knot Theory Ramifications 5 (1996), no. 4, 517--529.


\bibitem{kamadamatsumoto} S. Kamada and Y. Matsumoto, \textit{Certain racks associated with the braid groups}, Knots in Hellas '98 (Delphi), 118--130, Ser. Knots Everything, 24, World Sci. Publ., River Edge, NJ, 2000.


\bibitem{Loos1} O. Loos, \textit{Reflexion spaces and homogeneous symmetric spaces}, Bull. Amer. Math. Soc. 73 (1967), 250--253.

\bibitem{MargalitWinarski} D. Margalit and R. Winarski, \textit{Braids groups and mapping class groups: The Birman--Hilden theory}, Bull. Lond. Math. Soc. 53 (2021), no. 3, 643--659.

\bibitem{Matveev1982} S. V. Matveev, \textit{Distributive groupoids in knot theory}, in Russian: Mat. Sb. (N.S.) 119 (1) (1982) 78--88, translated in Math. USSR Sb. 47 (1) (1984), 73--83.


\bibitem{MR2103472} J. Gonz\'alez-Meneses and B. Wiest, \textit{On the structure of the centraliser of a braid}, Ann. Sci. \'Ecole Norm. Sup. (4) 37 (2004), no. 5, 729--757.

\bibitem{NiebrzydowskiPrzytycki2009} M. Niebrzydowski and J. H. Przytycki, \textit{The quandle of the trefoil knot as the Dehn quandle of the torus}, Osaka J. Math. 46 (2009), 645--659.

\bibitem{Picantin2000} M. Picantin, \textit{Petits groupes gaussiens}, PhD Thesis, Universit\'e de Caen, 2000, 131 pp.

\bibitem{Picantin2001a} M. Picantin, \textit{The center of thin Gaussian groups}, J. Algebra 245 (1) (2001), 92--122.

\bibitem{Picantin2001b} M. Picantin, \textit{The conjugacy problem in small Gaussian groups}, Comm. Algebra 29 (3) (2001), 1021--1038.

\bibitem{Putman} A. Putman, \textit{Lectures on the Torelli group}, www3.nd.edu/~andyp/teaching/2014SpringMath541/TorelliBook.pdf.

\bibitem{MR4330281} H. Raundal, M. Singh and M. Singh,  \textit{Orderability of link quandles}, Proc. Edinb. Math. Soc. (2) 64 (2021), no. 3, 620--649.


\bibitem{Randell} R. Randell, \textit{The fundamental group of the complement of a union of complex hyperplanes}, Invent. Math. 69 (1982) 103--108.


\bibitem{Winker1984} S. K. Winker, \textit{Quandles, knots invariants and the $n$-fold branched cover}, Ph.D. Thesis, University of Illinois at Chicago, 1984, 198 pp.

\bibitem{Yetter2002} D. N. Yetter, \textit{Quandles and Lefschetz fibrations}, arXiv:math/0201270.

\bibitem{Yetter2003} D. N. Yetter, \textit{Quandles and monodromy}, J. Knot Theory Ramifications 12 (2003), no. 4, 523--541.

\bibitem{Zablow1999} J. Zablow, \textit{Loops, waves, and an ``algebra'' for Heegaard splittings}, Ph.D. Thesis, City University of New York, 1999, 64 pp.

\bibitem{Zablow2003} J. Zablow, \textit{Loops and disks in surfaces and handlebodies}, J. Knot Theory Ramifications 12 (2003), no. 2, 203--223.

\bibitem{Zablow2008} J. Zablow, \textit{On relations and homology of the Dehn quandle}, Algebr. Geom. Topol. 8 (2008), no. 1, 19--51.

\end{thebibliography}
\end{document}